\newtheorem{thm}{Theorem}[section]
\newtheorem*{claim}{Claim}
\newtheorem*{problem}{Problem}
\newtheorem{defn}[thm]{Definition}
\newtheorem{prop}[thm]{Proposition}
\newtheorem{cor}[thm]{Corollary}
\newtheorem{lem}[thm]{Lemma}
\newtheorem{rem}[thm]{Remark}
\newtheorem{nota}[thm]{Notation}
\def\Ind#1#2{#1\setbox0=\hbox{$#1x$}\kern\wd0\hbox to 0pt{\hss$#1\mid$\hss}
\lower.9\ht0\hbox to 0pt{\hss$#1\smile$\hss}\kern\wd0}
\def\Notind#1#2{#1\setbox0=\hbox{$#1x$}\kern\wd0\hbox to 0pt{\mathchardef
\nn="3236\hss$#1\nn$\kern1.4\wd0\hss}\hbox to 0pt{\hss$#1\mid$\hss}\lower.9\ht0
\hbox to 0pt{\hss$#1\smile$\hss}\kern\wd0}
\def\cl{{\rm cl}}
\def\M{\mathcal{M}}
\def\Cb{\bar{\mathcal{C}}}
\journal{Annals of Pure and Applied Logic}
\begin{document}
\begin{frontmatter}
\title{The geometry of Hrushovski constructions, I. \\ The uncollapsed case.}

\author[UEA]{David M. Evans\corref{cor1}}
\ead{d.evans@uea.ac.uk}
\author[UEA]{Marco S. Ferreira\corref{cor1}}
\ead{masferr@gmail.com}
\address[UEA]{School of Mathematics, UEA, Norwich NR4 7TJ, UK.}
\cortext[cor1]{Corresponding Author}

\begin{abstract}
An intermediate stage in Hrushovski's construction of flat strongly minimal structures in a relational language $L$ produces $\omega$-stable structures of rank $\omega$. We analyze the pregeometries given by forking on the regular type of rank $\omega$ in these structures. We show that varying $L$ can affect the (local) isomorphism type of the pregeometry, but not its finite subpregeometries. A sequel will compare these to the pregeometries of the strongly minimal structures.
\end{abstract}

\begin{keyword}
Strongly Minimal Set\sep Amalgamation \sep Geometries \sep Hrushovski Constructions
\MSC[2010] 03C45
\end{keyword}

\end{frontmatter}

\section{Introduction}

Pregeometries arise in model theory via the operation of algebraic closure on a strongly minimal set, or more generally, via forking on the realisations of a regular type (see \cite{AP2}, for example). In the 1980's it was conjectured by Zilber that any strongly minimal structure is geometrically equivalent to one of the `classical' strongly minimal structures in the appropriate language: a pure set, a vector space over a fixed division ring, or an algebraically closed field. Here, `geometrically equivalent' means that the (pre)geometries are isomorphic, possibly after adding a small set of parameters.

This conjecture was refuted by Hrushovski in \cite{EH}. Working with a language $L_3$  having  a single 3-ary relation symbol $R$, Hrushovski constructs continuum-many non-isomorphic (countable, saturated) strongly minimal structures $D_\mu$, where $\mu$ is a function which controls the multiplicities of certain algebraic types. These are not geometrically equivalent to the classical strongly minimal structures: their geometries are not disintegrated as in the pure set case, nor do they embed a group configuration as in the other cases. Hrushovski refers to these structures as `flat' and asks whether there is more than one geometric equivalence type of flat strongly minimal structure; in particular, whether the $D_\mu$ are geometrically equivalent (cf. Section 5.2 of \cite{EH}). Questions of this form are reiterated in \cite{Hasson}. Hrushovski's question is the subject of this paper and its sequel  \cite{MF}.  Before describing our results it will be helpful to remind the reader of some of aspects of Hrushovski's construction, and to fix some notation.

In \cite{EH} the \textit{predimension} of a finite $L_3$-structure is defined to be the size of $A$ minus the number of triples from $A$ which are related by $R$. Of interest is the class $\mathcal{C}_3$ of finite structures where this is non-negative on all substructures. Associated to this is a notion $\leq$ of \textit{self-sufficient} embedding, and a \textit{dimension} $d$ which gives rise to a pregeometry on $A$ (all of this is reviewed in more detail and generality in Section \ref{sec2} here). The class $(\mathcal{C}_3, \leq)$ is an amalgamation class with respect to the distinguished embeddings, and so there is an associated generic structure $\mathcal{M}_3$ (-- sometimes called a \textit{Fra\"{\i}ss\'e limit}) which also inherits a dimension function $d$. It can be shown that $\mathcal{M}_3$ is $\omega$-stable of Morley rank $\omega$ and that the pregeometry $PG(\mathcal{M}_3)$ given by $d$ is exactly the geometry of forking given by the (unique) regular type of rank $\omega$. Similar statements hold if the language is replaced by a language $L_n$ with a single $n$-ary relation symbol, for $n \geq 3$, and we denote the corresponding generic structure by $\mathcal{M}_n$. 

What we have just described is often referred to as the `\textit{uncollapsed} case' of Hrushovski's construction. The strongly minimal $D_\mu$ can be seen as self-sufficient homogeneous substructures of $\mathcal{M}_3$ in which  certain rank-1 types of $\mathcal{M}_3$ have only finitely many realizations. Thus these rank-1 types are `collapsed' to algebraic types in $D_\mu$. The strongly minimal structure $D_\mu$ is also constructed as the generic structure of an amalgamation class $(\mathcal{C}_\mu, \leq)$ where $\mathcal{C}_\mu$ is a subclass of $\mathcal{C}_3$ in which the multiplicities of certain quantifier-free types (minimally simply algebraic extensions) are bounded by the function $\mu$.

We can now describe the structure and main results of this paper. Section 2 contains background material on the (uncollapsed) construction. In Section 3 we find embeddings between the pregeometries of the $\mathcal{M}_n$ and show that these pregeometries  cannot be distinguished by finite subgeometries (Corollary \ref{cor39}). However, we show in Section 4 that if $m \neq n$, then the (pre)geometries of $\mathcal{M}_n$ and $\mathcal{M}_m$ are \textit{not} isomorphic (Theorem \ref{T527}).  In Section 5 we examine the effect of localization on the pregeometies $PG(\mathcal{M}_n)$ and show (Theorem \ref{T532}) that the geometry after localizing over a finite subset is isomorphic to the original geometry. Combining this with the results of Section 3, we deduce that if $m \neq n$, then the (pre)geometries of $\mathcal{M}_n$ and $\mathcal{M}_m$ are not \textit{locally} isomorphic. The main result of Section 6 is to show that the pregeometry $PG(\mathcal{M}_n)$ can be viewed as the generic structure of an amalgamation class $(P_n, \unlhd_n)$ of pregeometries (Theorem \ref{T543}). 

The main ingredient in the proof of these results is a series of `Changing Lemmas.' Typically, these describe the effect (on the pregeometry) of replacing part of a structure in some $\mathcal{C}_n$ by another structure in $\mathcal{C}_n$ with the same domain. Related results are used  in \cite{DE1} and \cite{DE2}.

In the sequel \cite{MF} (and in \cite{MFThesis}) we show that the pregeometries of the strongly minimal sets $D_\mu$ are all isomorphic to the pregeometry of $\mathcal{M}_3$. Moreover, the variation on the strongly minimal set construction given in 5.2 of \cite{EH} produces pregeometries locally isomorphic to the pregeometry of $\mathcal{M}_3$. Parallel results hold if the construction is done with an $n$-ary relation in place of a $3$-ary relation (and more general languages), and it appears that the distinct geometric equivalence types of the flat countable saturated strongly minimal structures produced by the construction in \cite{EH} are given by the pregeometries $PG(\mathcal{M}_n)$ (for $n\geq 3$).

\medskip

\noindent\textit{Acknowledgement:\/} Most of the results of this paper were produced whilst the second Author was supported as an Early Stage Researcher by the Marie Curie Research Training Network MODNET, funded by grant MRTN-CT-2004-512234 MODNET from the CEC.

\section{The construction and notation}\label{sec2}

We start with a brief description of the objects of our study, following Hrushovski \cite{EH} and Wagner \cite{FW}. The article \cite{B&S} of Baldwin and Shi gives another axiomatization and generalization of this method. The book \cite{AP2} of Pillay contains all necessary background material on pregeometries and model theory.

\smallskip

We work in slightly more generality than that outlined in the Introduction. 

\begin{defn}\rm\label{fnotation}
Let $I$ be a countable set and $f:I\to\mathbbm N\setminus\{0\}\times\mathbbm N\setminus\{0\}$ be a function. Write  $f(i)=(n_i,\alpha_i)$. Let $L_f=\{R_i:i\in I\}$ be a language where $R_i$ is an $n_i$-ary relational symbol. 

If $A$ is a finite $L_f$-structure, we define its \textit{predimension}  $\delta_f(A)$ to be
$$\delta_f(A)=|A|-\sum_{i\in I}\alpha_i|R_i^A|$$
(or $-\infty$), where $R_i^A$ is the set of $n_i$-tuples from $A$ which satisfy the relation $R_i$. We let 
$\mathcal C_f$ be the set of finite $L_f$-structures $A$ such that $\delta_f(A') \geq 0$ for all $A'  \subseteq A$. 

Suppose $A \subseteq B \in \mathcal{C}_f$.  We write $A \leq B$ and say that $A$ is \textit{self-sufficient} in $B$ (or \textit{strong} in $B$) if for all $B'$ with $A \subseteq B' \subseteq B$ we have $\delta_f(A) \leq \delta_f(B')$. In particular, $\emptyset \leq B$ for $B \in \mathcal C_f$.
\end{defn}

A key property of $\delta_f$ is that it is \textit{submodular}: if $A, B \subseteq C \in \mathcal{C}_f$,  then $\delta_f(A \cup B) \leq \delta_f(A) + \delta_f(B) - \delta_f(A \cap B)$. Using this, one shows that if $A \leq B \leq C$ then $A \leq C$, and if $A_1, A_2 \leq B$ then $A_1\cap A_2 \leq B$. 

Let $\bar {\mathcal C_f}$  be the class of $L_f$-structures all of whose finite substructures lie in $\mathcal{C}_f$. We can extend the notion of self-sufficiency to this class in a natural way, and for $B \subseteq C \in \bar {\mathcal C_f}$ we have $B \leq C$ if and only if whenever $A \leq B$ is finite, then $A \leq C$. 

Note that if $A \subseteq B \in \bar {\mathcal C_f}$  and $A$ is finite then there is a finite $A'$ with $A \subseteq A' \subseteq B$ and $\delta_f(A')$ as small as possible. In this case $A' \leq B$. It follows that there is a smallest finite set $C \leq B$ with $A \subseteq C$. We denote this by $\cl_B(A)$ and refer to it as the \textit{self-sufficient closure} of $A$ in $B$. Thus the self-sufficient closure (in some given structure) of a finite set is finite, and we can extend the definition to arbitrary subsets of a structure in $\bar{\mathcal{C}}_f$. It is easy to see that $\cl_B$ is indeed a closure operation on $B$. However it does not usually satisfy the exchange condition and it is \textit{not} the closure operation which produces our pregeometries. The latter is what is given in the following. Note that we write $\delta$ rather than $\delta_f$ where no confusion will arise.

\begin{defn} \rm
Let $\mathcal M\in\bar {\mathcal C_f}$ and $A$ be a finite subset of $\mathcal{M}$. We define the \textit{dimension} $d_{\mathcal M}(A)$ of $A$ (in $\mathcal{M}$)  to be the minimum value of $\delta(A')$ for all finite subsets $A'$ of $\mathcal{M}$ which contain $A$. 

We define the $d$-\textit{closure} of $A$ in $\mathcal{M}$ to be: 
$$\cl^{d_{\mathcal M}}(A)=\{c\in\mathcal M:d_{\mathcal M}(Ac)=d_{\mathcal M}(A)\}.$$ 

We can coherently extend the definition of $d$-closure to infinite subsets $A$ of $\mathcal M$ by saying that the $d$-closure of $A$ is the union of the $d$-closures of finite subsets of $A$. 
\end{defn}

Note  that the $d$-closure of a finite set does not need to be finite. Concerning the relation with the self-sufficient closure, we always have $\cl_{\mathcal M}(A)\subseteq \cl^{d_{\mathcal M}}(A)$. If $A$ is finite then $d_{\mathcal M}(A)=\delta(\cl_{\mathcal M}(A))$. Also if $A$ is finite we have $A\leq\mathcal M\text{ if and only if }d_{\mathcal M}(A)=\delta(A)$. 

\begin{thm}
Let $\mathcal M\in\bar {\mathcal C_f}$. Then $(\mathcal M,\cl^{d_{\mathcal M}})$ is a pregeometry. Moreover, the dimension function (as cardinality of a basis) equals $d_{\mathcal M}$ on finite subsets of $\mathcal M$. We may use the notation $PG(\mathcal M)$ instead of $(\mathcal M,\cl^{d_{\mathcal M}})$.
\end{thm}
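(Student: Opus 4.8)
The plan is to verify the three pregeometry axioms for $\cl^{d_{\mathcal M}}$ directly, using submodularity of $\delta$ as the main tool, and then to identify the basis-cardinality dimension with $d_{\mathcal M}$.

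First I would record the basic properties of $d = d_{\mathcal M}$ on finite subsets of $\mathcal M$. From the definition and submodularity one gets monotonicity ($A \subseteq B \Rightarrow d(A) \le d(B)$) and, crucially, submodularity of $d$ itself: $d(A \cup B) + d(A \cap B) \le d(A) + d(B)$. The standard argument is that $d(X) = \delta(\cl_{\mathcal M}(X))$, and given finite $A, B$ one takes $A' = \cl_{\mathcal M}(A)$, $B' = \cl_{\mathcal M}(B)$ and applies submodularity of $\delta$ to $A', B'$, noting $A \cup B \subseteq A' \cup B'$ and $A \cap B \subseteq A' \cap B'$, so $d(A\cup B) \le \delta(A'\cup B') \le \delta(A') + \delta(B') - \delta(A' \cap B') \le d(A) + d(B) - d(A\cap B)$. (One should be slightly careful that $A' \cap B'$ need not be the self-sufficient closure of $A \cap B$, but $d(A \cap B) \le \delta(A' \cap B')$ suffices.) I would also note the ``unit increment'' property: $d(Ac) \in \{d(A), d(A)+1\}$ for a single point $c$, since $\delta(A'c) \le \delta(A') + 1$ for any finite $A' \ni A$.

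Next I would check the three axioms. Reflexivity ($A \subseteq \cl^{d}(A)$) and finite character are immediate from the definition (the latter because $d$ is computed on finite sets and we defined the infinite closure as the union over finite subsets). For idempotence, $\cl^d(\cl^d(A)) = \cl^d(A)$: if $c \in \cl^d(B)$ with $B \subseteq \cl^d(A)$ finite, then using a finite $B_0 \subseteq A$ with $d(B_0 B) = d(B_0)$ (obtained from finite character and monotonicity), one shows $d(B_0 c) = d(B_0)$ via the unit-increment property and monotonicity, hence $c \in \cl^d(A)$. For the exchange property: suppose $c \in \cl^d(Ab) \setminus \cl^d(A)$, with $A$ finite; I want $b \in \cl^d(Ac)$. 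Since $c \notin \cl^d(A)$ we have $d(Ac) = d(A) + 1$. From $c \in \cl^d(Ab)$ we get $d(Abc) = d(Ab)$. Combining with $d(Ab) \le d(A) + 1$ and $d(Ac) = d(A)+1 \le d(Abc) = d(Ab) \le d(A)+1$ forces $d(Ab) = d(A)+1$ and $d(Abc) = d(A)+1 = d(Ac)$, so $b \in \cl^d(Ac)$. I would phrase the general (not-necessarily-finite $A$) case by reducing to a finite subset of $A$ via finite character.

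Finally, to see that the pregeometry's dimension function equals $d_{\mathcal M}$ on finite sets, I would argue that a finite set $A$ contains an independent subset $A_0$ (with respect to $\cl^d$) with $\cl^d(A_0) = \cl^d(A)$ and $|A_0| = d(A)$: build $A_0$ greedily by discarding points of $A$ that lie in the $d$-closure of the others, using the unit-increment property to see that each genuinely new point raises $d$ by exactly $1$, so after removing a dependent point the value of $d$ is unchanged and after the process $|A_0| = d(A_0) = d(A)$; independence of $A_0$ follows since removing any point strictly drops $d$. The main obstacle, and the place requiring the most care, is the submodularity-of-$d$ step together with the exchange property — specifically being careful about the relationship between self-sufficient closures of $A$, $B$ and of $A \cup B$, $A \cap B$, and making sure the unit-increment estimate is deployed correctly; everything else is a routine transcription of the standard matroid-from-a-submodular-function argument.
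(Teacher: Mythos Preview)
The paper does not actually supply a proof of this theorem: it is presented in Section~2 as background material, alongside the other foundational facts about the construction, with a blanket reference to Hrushovski \cite{EH} and Wagner \cite{FW}. So there is no paper proof to compare against.

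That said, your outline is the standard argument and is essentially correct. Two small points. First, you list reflexivity, finite character, idempotence and exchange, but you do not explicitly verify monotonicity of the closure operator ($A\subseteq B\Rightarrow \cl^d(A)\subseteq\cl^d(B)$); this is exactly where your submodularity-of-$d$ computation is used, so it is worth stating: if $d(Ac)=d(A)$ and $A\subseteq B$, then applying submodularity to $B$ and $Ac$ gives $d(Bc)\le d(B)+d(Ac)-d(A)=d(B)$, hence $c\in\cl^d(B)$. Second, in the idempotence step you should spell out why the finite $B_0\subseteq A$ with $d(B_0B)=d(B_0)$ exists: take for each $b\in B$ a finite witness $A_b\subseteq A$ with $b\in\cl^d(A_b)$, set $B_0=\bigcup_b A_b$, and use monotonicity of closure plus the unit-increment property inductively over the elements of $B$. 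With these two clarifications the argument is complete.
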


The class $(\mathcal C_f,\leq)$ has the following (strong) $\leq$-\textit{free amalgamation property}.

\begin{prop}  Let $A_0\subseteq A_1\in\mathcal C_f$ and $A_0\leq A_2\in\mathcal C_f$ and $A_1\cap A_2=A_0$. Let $F = A_1\amalg_{A_0}A_2$ be the structure with underlying set $A_1\cup A_2$ such that the only relations are the ones arising from $A_1$ and $A_2$. Then $F\in\mathcal C_f$ and $A_1\leq F.$
\end{prop}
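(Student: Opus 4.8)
The plan is to deduce both assertions from a single computation: the predimension $\delta$ is \emph{additive} along the free amalgam. Precisely, for any finite $B \subseteq F$, writing $B_1 = B \cap A_1$, $B_2 = B \cap A_2$ and $B_0 = B_1 \cap B_2 = B \cap A_0$, I claim
\[
\delta(B) = \delta(B_1) + \delta(B_2) - \delta(B_0).
\]
To see this, note that by construction $R_i^F = R_i^{A_1} \cup R_i^{A_2}$, so every tuple satisfying a relation of $F$ has all its coordinates in $A_1$ or all in $A_2$; hence $R_i^B = R_i^{B_1} \cup R_i^{B_2}$ with $R_i^{B_1} \cap R_i^{B_2} = R_i^{B_0}$, and inclusion--exclusion gives $|R_i^B| = |R_i^{B_1}| + |R_i^{B_2}| - |R_i^{B_0}|$. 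Combined with $|B| = |B_1| + |B_2| - |B_0|$, feeding this into the definition of $\delta_f$ yields the displayed identity. (Taking $B = F$ also records $\delta(F) = \delta(A_1) + \delta(A_2) - \delta(A_0)$, though what I really use is the identity for arbitrary finite $B$.)

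Granting this, here is how the two assertions follow. For $F \in \mathcal{C}_f$, fix a finite $B \subseteq F$; I must show $\delta(B) \ge 0$. Since $A_0$ is finite, $A_0 \cup B_2$ is a finite subset of $A_2$ containing $A_0$, so $A_0 \le A_2$ gives $\delta(A_0) \le \delta(A_0 \cup B_2)$; submodularity applied to $A_0$ and $B_2$ inside $A_2$ gives $\delta(A_0 \cup B_2) \le \delta(A_0) + \delta(B_2) - \delta(A_0 \cap B_2) = \delta(A_0) + \delta(B_2) - \delta(B_0)$. Comparing, $\delta(B_0) \le \delta(B_2)$, and then by the additivity identity $\delta(B) = \delta(B_1) + (\delta(B_2) - \delta(B_0)) \ge \delta(B_1) \ge 0$, the last step because $B_1 \subseteq A_1 \in \mathcal{C}_f$. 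For $A_1 \le F$, let $B'$ be finite with $A_1 \subseteq B' \subseteq F$. Then $B' \cap A_1 = A_1$ and $B' \cap A_0 = A_0$ (as $A_0 \subseteq A_1 \subseteq B'$), so with $B'_2 = B' \cap A_2$ the identity reads $\delta(B') = \delta(A_1) + \delta(B'_2) - \delta(A_0)$; since $A_0 \subseteq B'_2 \subseteq A_2$ and $A_0 \le A_2$, we have $\delta(A_0) \le \delta(B'_2)$, whence $\delta(B') \ge \delta(A_1)$. As $B'$ was arbitrary, $A_1 \le F$.

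The one place to be careful --- and the only real obstacle, though a mild one --- is the additivity computation itself: one must check precisely that $F$ carries no relations beyond those inherited from $A_1$ and $A_2$ and that the two factors overlap in exactly $A_0$ with its induced structure, so that the inclusion--exclusion count is valid coordinate by coordinate for each $R_i$. Once that identity is in hand, the rest is just the submodularity of $\delta$ (already noted above) and the defining property of the self-sufficient embedding $A_0 \le A_2$. If wanted, the symmetric conclusion $A_2 \le F$ follows the same way under the hypothesis $A_0 \le A_1$, but it is not needed here.
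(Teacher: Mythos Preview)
Your proof is correct and is the standard argument; the paper states this proposition as background without proof, since it is a well-known feature of the predimension construction. Your additivity identity $\delta(B)=\delta(B_1)+\delta(B_2)-\delta(B_0)$ for the free amalgam, followed by the submodularity-based estimate $\delta(B_0)\le\delta(B_2)$ coming from $A_0\le A_2$, is exactly how one proves this in the literature.
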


\begin{defn}\rm Let $\mathcal M$ be an $L_f$-structure. We say that $\mathcal M$ is a \textit{generic structure for} $(\mathcal C_f,\leq)$ if it is countable and satisfies the following conditions:
\begin{enumerate}
\item [(F1)] $\mathcal M\in\bar{\mathcal C_f}$
\item [(F2)] (extension property) If $A\leq\mathcal M$ and $A\leq B\in\mathcal C_f$ then there exists an embedding $g:B\to\mathcal M$ such that $g_{|A}=Id_{|A}$ (where $Id_{|A}$ is the identity map) and such that $g(B)\leq\mathcal M$.
\end{enumerate}
\end{defn}

Here we could also replace (F1) by
\begin{enumerate}
\item[(F1$'$)] $\mathcal{M}$ is the union of a chain $A_0 \leq A_1 \leq A_2 \leq \cdots$ of structures in $\mathcal{C}_f$.
\end{enumerate}

A standard argument shows:

\begin{thm}
There is a generic model $\mathcal M_{\mathcal C_f}$ for $(\mathcal C_f,\leq)$. It is unique up to isomorphism and if $h : A_1 \to A_2$ is an isomorphism beteween finite self-sufficient substructures, then $h$ extends to an automorphism of  $\mathcal M_{\mathcal C_f}$.  We may use the notation $\mathcal M_f$ instead of $\mathcal M_{\mathcal C_f}$.
\end{thm}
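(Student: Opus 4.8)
The plan is to prove existence by a chain construction with bookkeeping, and to prove uniqueness and homogeneity together by a single back-and-forth argument; in both halves the work is carried by the $\leq$-free amalgamation property and by the fact (noted just before the theorem) that self-sufficient closures of finite sets are finite.

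\textbf{Existence.} Since $I$ is countable, $L_f$ is countable, so up to isomorphism there are only countably many pairs $(A,B)$ with $A\leq B\in\mathcal C_f$ and $B$ finite. Fix an enumeration of all such pairs (equipped with a chosen embedding of $A$) in which every isomorphism type occurs infinitely often. Build a chain $A_0\leq A_1\leq\cdots$ in $\mathcal C_f$ with $A_0=\emptyset$ as follows: given $A_n$, if the $n$-th task consists of a pair $(A,B)$ together with an embedding $A\hookrightarrow A_n$ whose image is self-sufficient in $A_n$, set $A_{n+1}=A_n\amalg_A B$; by the $\leq$-free amalgamation property this lies in $\mathcal C_f$, $A_n\leq A_{n+1}$, and $B$ sits self-sufficiently over $A$ in $A_{n+1}$. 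Otherwise put $A_{n+1}=A_n$. Let $\mathcal M_{\mathcal C_f}=\bigcup_n A_n$; then (F1$'$) holds, hence (F1). For (F2): given finite $A\leq\mathcal M_{\mathcal C_f}$ and $A\leq B\in\mathcal C_f$, we have $A\subseteq A_n$ for some $n$, and since $A\leq\mathcal M_{\mathcal C_f}$ and $A_n\leq\mathcal M_{\mathcal C_f}$ the intersection property gives $A=A\cap A_n\leq A_n$; the corresponding task is treated at some stage $m\geq n$, embedding $B$ over $A$ into $A_{m+1}$ with self-sufficient image, and self-sufficiency is preserved on passing up to $\mathcal M_{\mathcal C_f}$.

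\textbf{Uniqueness and homogeneity.} Let $\mathcal M,\mathcal N$ both be generic for $(\mathcal C_f,\leq)$ and let $h_0:A_1\to A_2$ be an isomorphism between finite self-sufficient substructures $A_1\leq\mathcal M$, $A_2\leq\mathcal N$; taking $A_1=A_2=\emptyset$ gives uniqueness, and taking $\mathcal N=\mathcal M$ gives homogeneity. Enumerate $\mathcal M=\{m_k:k\in\mathbb N\}$ and $\mathcal N=\{n_k:k\in\mathbb N\}$, and construct an increasing chain of isomorphisms $h_0\subseteq h_1\subseteq\cdots$ between finite self-sufficient substructures of $\mathcal M$ and of $\mathcal N$, alternately forcing $m_k$ into the domain and $n_k$ into the range. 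Suppose $h_s:B_1\to B_2$ is given with $B_1\leq\mathcal M$, $B_2\leq\mathcal N$. To force $m_k$ into the domain, set $B_1^\ast=\cl_{\mathcal M}(B_1\cup\{m_k\})$, which is finite and satisfies $B_1^\ast\leq\mathcal M$; using $h_s$, transport the $L_f$-structure on $B_1^\ast$ to an abstract structure $C\in\mathcal C_f$ on a set extending $B_2$ via an isomorphism extending $h_s$. Since $B_1\leq B_1^\ast$ we get $B_2\leq C$, so (F2) applied in $\mathcal N$ to $B_2\leq\mathcal N$ and $B_2\leq C$ yields an embedding of $C$ over $B_2$ into $\mathcal N$ with self-sufficient image; composing gives $h_{s+1}\supseteq h_s$ with domain $B_1^\ast\ni m_k$ and self-sufficient range in $\mathcal N$. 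The back step, forcing $n_k$ into the range, is symmetric, using genericity of $\mathcal M$. Then $h=\bigcup_s h_s$ is a bijection $\mathcal M\to\mathcal N$ preserving and reflecting every relation, i.e.\ an isomorphism extending $h_0$.

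\textbf{Main obstacle.} The one genuinely delicate point is that (F2) requires the current domain to be \emph{self-sufficient} in the ambient generic: this is exactly why at each step of the back-and-forth one must pass from $B_1\cup\{m_k\}$ to its self-sufficient closure $\cl_{\mathcal M}(B_1\cup\{m_k\})$, relying on the finiteness of self-sufficient closures together with transitivity of $\leq$ (to see $B_1^\ast\leq\mathcal M$) and the fact that self-sufficiency transfers across the transported isomorphism (to see $B_2\leq C$). The bookkeeping in the existence proof and the verification that free amalgamation keeps us inside $\mathcal C_f$ with the lower factor self-sufficient are supplied by the amalgamation proposition; everything else is routine.
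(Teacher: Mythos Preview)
Your argument is correct and is exactly the ``standard argument'' the paper alludes to: the paper does not actually give a proof of this theorem, merely stating that it follows by standard Fra\"{\i}ss\'e-style reasoning, so there is nothing to compare your approach against beyond confirming that you have supplied the expected construction and back-and-forth.

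One small expository point: in the existence half, the phrase ``equipped with a chosen embedding of $A$'' is ambiguous (an embedding into what?), and the bookkeeping needs to guarantee that every pair $(A,B)$ together with \emph{every} self-sufficient embedding of $A$ into some $A_n$ is eventually serviced. Your appeal to ``every isomorphism type occurs infinitely often'' does handle this, since at each recurrence you can pick a fresh embedding of $A$ into the current $A_m$, and there are only finitely many such embeddings at any stage; but it would be cleaner to say so explicitly. The uniqueness/homogeneity half is clean, and you have correctly isolated the one non-routine point: passing to $\cl_{\mathcal M}(B_1\cup\{m_k\})$ so that (F2) applies, using finiteness of self-sufficient closures and transitivity of $\leq$.
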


\begin{prop} 
\label{P514}
With the above notation, If $L_f$ is finite then $\mathcal M_f$ is saturated and $Th(\mathcal M_f)$ is $\omega$-stable.
\end{prop}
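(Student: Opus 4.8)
The plan is to verify the two assertions separately, both by standard back-and-forth/counting arguments adapted to the self-sufficient amalgamation setting. For saturation, I would show that $\mathcal{M}_f$ realises every type over a finite parameter set $\bar{a}$. Given such $\bar{a}$, enlarge it to $A = \cl_{\mathcal{M}_f}(\bar a)$, which is finite (since $L_f$ is finite, self-sufficient closures of finite sets are finite) and satisfies $A \le \mathcal{M}_f$. The key point is that, because $L_f$ is finite, the quantifier-free type of a tuple over $A$ inside a structure $B \in \mathcal{C}_f$ with $A \le B$ determines — via the extension property (F2) and the homogeneity theorem — the complete type of its image in $\mathcal{M}_f$ over $A$. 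Conversely, any complete type over $A$ is determined by the collection of quantifier-free types of finite tuples realising it together with $A$, and each such configuration, being a finite $L_f$-structure containing $A$, either lies in $\mathcal{C}_f$ or does not; those that do can be embedded over $A$ into $\mathcal{M}_f$ by (F2) after taking self-sufficient closure. A compactness/tree argument then produces a realisation of the full type in $\mathcal{M}_f$, so $\mathcal{M}_f$ is $\aleph_0$-saturated; being countable, it is saturated.

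For $\omega$-stability I would count complete types over a countable model $N \models Th(\mathcal{M}_f)$. By the previous paragraph and Łoś–Vaught-style arguments, $N$ carries a predimension and a self-sufficient closure operation of its own (it lies in $\bar{\mathcal{C}}_f$), and every complete $1$-type over $N$ is determined by (i) the self-sufficient closure $A$ in $N$ of a realisation together with a finite part of $N$ — really by $\cl_N$ of a finite subset of $N$, of which there are only countably many since $L_f$ is finite and $N$ is countable — and (ii) the quantifier-free type of the realisation over a finite self-sufficient subset of $N$, of which there are again only countably many. More carefully: a complete type $p$ over $N$ is non-forking/"coheir-like" over some finite $A \le N$, and over such an $A$ the type is pinned down by a finite $L_f$-structure $B$ with $A \le B \in \mathcal{C}_f$ (the self-sufficient closure of $A$ together with a realisation); there are only countably many isomorphism types of such $(A,B)$ over a fixed finite $A$. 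Summing over the countably many finite self-sufficient $A \subseteq N$ gives $|S_1(N)| \le \aleph_0$, hence $\omega$-stability.

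The main obstacle, and the part that needs genuine care rather than routine bookkeeping, is the step asserting that a complete type over $N$ "lives over" a finite self-sufficient subset $A$ — i.e. that the complete type of a realisation $c$ over $N$ is determined by the quantifier-free type of $\cl_N(Ac)$ over $A$ for $A = \cl_N(\text{some finite set})$ large enough. This is where the geometry of $\le$ really enters: one must show that once $A$ is self-sufficient and contains enough of $N$, no further relations hold between $c$ and $N \setminus A$ beyond those already recorded (this is essentially the statement that $\cl_N(Ac) \cap N = A$ for a "generic" extension, using submodularity and free amalgamation), and that $\mathcal{M}_f$'s extension property transfers to elementary extensions. I would handle this by first establishing a quantifier-elimination-type description of definable sets in $\mathcal{M}_f$ (or at least of complete types) in terms of self-sufficient closures and quantifier-free data — this is the content of Hrushovski's and Wagner's analysis — and then the type-counting becomes the straightforward estimate above. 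In the write-up I would cite \cite{EH}, \cite{FW}, or \cite{B&S} for the detailed verification of these standard facts about the generic model, since the finiteness of $L_f$ makes all the relevant sets of local data countable.
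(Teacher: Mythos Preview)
The paper does not give a proof of this proposition at all: it is stated as a known fact, with the surrounding text (``A standard argument shows\ldots'') and the subsequent Remark making clear that the authors regard it as background material imported from \cite{EH}, \cite{FW} and \cite{B&S}. So there is no ``paper's own proof'' to compare against; your proposal to cite precisely those references for the detailed verification is exactly what the paper itself does.

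Your sketch is a reasonable outline of how the standard argument goes and identifies the right nontrivial ingredient (that a complete type over a model is determined by quantifier-free data over a finite self-sufficient subset, which amounts to the usual description of types in the generic). One small inaccuracy: the finiteness of self-sufficient closures of finite sets does \emph{not} require $L_f$ to be finite --- the paper proves this for arbitrary $f$ just after Definition~\ref{fnotation}. The place where finiteness of $L_f$ is actually used is in the type-counting step, to ensure there are only countably many isomorphism types of finite $L_f$-structures (hence only countably many possible self-sufficient extensions $B$ of a given finite $A$), which you use but attribute to the wrong step.
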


\begin{rem}\rm
There are examples of $f$ where $L_f$ is not finite and such that $\mathcal M_f$ is saturated and $Th(\mathcal M_f)$ is $\omega$-stable, for example when $\mathcal C_f$ has only finitely many isomorphism types of each size. However we believe that we do get saturation and $\omega$-stability for every choice of $f$ because $f$ is integer valued. But this would require further analysis.
\end{rem}

Among these examples we would like to distinguish the following ones. The first case is as in the Introduction:
\begin{nota}\rm
\begin{enumerate}
\item[(i)] Let $n \geq 3$ and $I$ consist of a singleton. The language $L_n$ consists of a single $n$-ary relation symbol $R$ and the predimension is given by $\delta_n(A) = \vert A \vert - \vert R^A\vert$. In this case we denote the class by $\mathcal{C}_n$ and the generic model by $\mathcal{M}_n$. 
\item[(ii)] Let $I=\mathbbm N\setminus\{0\}$ and $f$ a function defined by $f(i)=(i,1)$ for each $i\in I$. In other words we are considering the predimension given by: $$\delta_f(A)=|A|-\sum_{i\in\mathbbm N\setminus\{0\}}|R_i^A|,$$
where $R_i$ is an $i$-ary relation.
We  write $L_{\omega}$, $\mathcal C_{\omega}$, $\delta_{\omega}$, $d_{\omega}$ and $\mathcal M_{\omega}$ instead of $L_f$, $\mathcal C_f$, $\delta_f$, $d_f$ and $\mathcal M_f$, for this particular function $f$.
\end{enumerate}
\end{nota}

It will be convenient to fix a first order language for the class of pregeometries.  A reasonable choice for this is the language $LPI=\{I_n:n\in\mathbbm N\setminus\{0\}\}$ where each $I_n$ is an $n$-ary relational symbol. A pregeometry $(P,\cl^P)$ will be seen as a structure in this language by saying $I_n^P=\{\bar a\in P^n:\bar a\textit{ is independent in }P\}$. Notice that we can recover a pregeometry just by knowing its finite independent sets. In particular,  a pregeometry is completely determined by the dimension function on its  finite subsets. Note that the isomorphism type of a pregeometry is determined by the isomorphism type of its associated geometry and the size of the equivalence classes of interdependence. In the case where these are all countably infinite, it therefore makes no difference whether we consider the geometry or the pregeometry.

\section{Pregeometries from different predimensions}

The first result of this section shows that sometimes the pregeometry on the generic model associated to a particular predimension is isomorphic to the pregeometry on the generic model associated to a `simpler' predimension. This is motivated by the observation that the pregeometry associated to the predimension $\delta(A)=|A|-|R_3^A|-|R_4^A|$ is isomorphic to the simpler one given by $\delta(A)=|A|-|R_4^A|$. Actually, the proof of the following general result follows the same idea as the proof of this particular result, but is considerably more technical. 

\begin{thm}
\label{T516}
Let $I$ be a countable set. Let $f:I\to\mathbbm N\setminus\{0\}\times\mathbbm N\setminus\{0\}$ be a function and $f(i)=(n_i,\alpha_i)$. Let $\sim$ be an equivalence relation on $I$ defined by $i\sim j$ if and only if $f(i)=f(j)$. Define a partially ordered set $(\widetilde I,\leq)$ by saying $[i]_\sim\leq[j]_\sim$ if and only if $n_i\leq n_j$ and $\alpha_j|\alpha_i$. Let $J\subseteq I$ be such that $\widetilde J$ is cofinal in $\widetilde I$ (so this means that for every $i \in I$ there exists $j \in J$ with $n_i \leq n_j$ and $\alpha_j \vert \alpha_i$). Then $$PG(\mathcal M_f)\simeq PG(\mathcal M_{f_{|J}}).$$
\end{thm}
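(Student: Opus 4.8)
The plan is to mimic, in general, the motivating case $\delta(A)=|A|-|R_3^A|-|R_4^A|$ versus $\delta(A)=|A|-|R_4^A|$, in which an instance $R_3(a,b,c)$ is traded for a small configuration of $R_4$-relations on $\{a,b,c\}$ together with a couple of fresh points that end up in the $d$-closure of $\{a,b,c\}$. So I would first set up a \emph{gadget substitution}. One may assume $\alpha_i\le n_i$ for every $i$: if $\alpha_i>n_i$ then no member of $\mathcal C_f$ satisfies $R_i$, so $R_i$ may be deleted from $L_f$ and from $J$ without changing $\mathcal M_f$, $\mathcal M_{f|_J}$, or the cofinality hypothesis. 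Using cofinality of $\widetilde J$ in $\widetilde I$, fix for each $i\in I\setminus J$ an index $\rho(i)\in J$ with $n_{\rho(i)}\ge n_i$ and $\alpha_{\rho(i)}\mid\alpha_i$, and set $k_i=\alpha_i/\alpha_{\rho(i)}$. For each $n_i$-tuple $\bar a$ of distinct elements I would construct a finite $L_{f|_J}$-structure $G_i(\bar a)$ on $\bar a$ together with a set $P_i(\bar a)$ of fresh points, carrying only $R_{\rho(i)}$-relations and none of them among the points of $\bar a$, chosen so that: $G_i(\bar a)\in\mathcal C_{f|_J}$; gluing $G_i(\bar a)$ to any structure containing $\bar a$ (adding no new relations) lowers the predimension of $\bar a$ by exactly $\alpha_i$ and places every point of $P_i(\bar a)$ in the $d$-closure of $\bar a$, with each such point of $d$-dimension $1$; and $G_i(\bar a)$ has no nontrivial automorphism fixing $\bar a$ pointwise. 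The point of the hypotheses is precisely this: $\alpha_{\rho(i)}\mid\alpha_i$ makes the predimension arithmetic work out (one balances $\alpha_{\rho(i)}m$ fresh points against $k_i+m$ new relations of weight $\alpha_{\rho(i)}$, for a suitable $m$, with net cost $\alpha_i$), and $n_{\rho(i)}\ge n_i$ leaves room to host the relations.

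Next I would define, for any $M\in\bar{\mathcal C}_f$, the structure $\Psi(M)$ obtained by deleting from $M$ all relations $R_i$ with $i\in I\setminus J$ and gluing on, along each tuple $\bar a$ with $M\models R_i(\bar a)$, a fresh disjoint copy of $G_i(\bar a)$. A predimension count, localising to finitely many gadgets at a time, should give: $\Psi(M)\in\bar{\mathcal C}_{f|_J}$; $\Psi$ takes a self-sufficient pair $A_0\le A$ in $\bar{\mathcal C}_f$ to a self-sufficient pair $\Psi(A_0)\le\Psi(A)$; $\delta_{f|_J}(\Psi(A))=\delta_f(A)$ for finite $A$; hence $d_{\Psi(M)}(A)=d_M(A)$ for finite $A\subseteq M$; and the new points of $\Psi(M)$ lie in $\cl^{d_{\Psi(M)}}(M)$ but are ordinary points. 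From this it follows that the finite subpregeometries (with their self-sufficient closures) occurring inside $PG(\mathcal M_f)$ also occur inside $PG(\mathcal M_{f|_J})$. The reverse inclusion is immediate, since $L_{f|_J}\subseteq L_f$ and a member of $\mathcal C_{f|_J}$ realising a finite pregeometry is already a member of $\mathcal C_f$ realising it, with the same self-sufficiency relation and the same $\delta$.

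To finish I would show that $PG(\mathcal M_f)$ and $PG(\mathcal M_{f|_J})$ are both generic for this common class of finite pregeometries: the requisite homogeneity and strong-extension property at the level of pregeometries should follow from the structure-level genericity of $\mathcal M_f$ and $\mathcal M_{f|_J}$ together with the substitution $\Psi$, which lets one translate a required strong extension of a finite subpregeometry realised in $\mathcal M_f$ into one realised in $\mathcal M_{f|_J}$, and, in the other direction, lets one realise a given $\mathcal C_{f|_J}$-configuration inside $\mathcal M_f$ by reading its relations as genuine $L_f$-relations (contracting, when convenient, a piece that ``looks like a gadget'' back to a single $R_i$-relation). A back-and-forth then delivers an isomorphism $PG(\mathcal M_f)\to PG(\mathcal M_{f|_J})$; alternatively, noting that on both $\mathcal M_f$ and $\mathcal M_{f|_J}$ all the equivalence classes of interdependence are countably infinite (routine from genericity), it is enough, by the remark following the definition of $LPI$, to produce an isomorphism of the associated geometries, which the same argument gives.

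The main obstacle is precisely the construction and bookkeeping of the gadgets $G_i(\bar a)$ and the verification that they interact correctly with self-sufficiency and with the extension property — that they create or destroy no finite subpregeometry and are rigid enough for the back-and-forth to run — which is exactly the kind of computation the introduction attributes to the ``Changing Lemmas''. A secondary point needing the same care is the case $f(i)=f(j)$ with $i\in I\setminus J$ and $j\in J$ (so $n_i=n_j$ and $\alpha_i=\alpha_j$): here too one must substitute a genuine gadget with fresh points rather than merely rename $R_i$ to $R_j$, since a tuple satisfying both $R_i$ and $R_j$ would otherwise have its predimension raised by $\alpha_i$; arranging the gadget relations when $n_{\rho(i)}=n_i$ is the fiddliest part.
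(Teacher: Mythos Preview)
Your approach via gadgets with fresh points is genuinely different from the paper's, and it runs into a real obstacle at the back-and-forth step.

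The paper's argument is more direct: it builds an $L_{f|_J}$-structure $\mathcal{M}_f^\pi$ on the \emph{same underlying set} as $\mathcal{M}_f$, with the property that $\delta_{f|_J}$ computed in $\mathcal{M}_f^\pi$ agrees with $\delta_f$ computed in $\mathcal{M}_f$ on every finite subset. No fresh points are added: each $R_i$-tuple $\bar a$ (with $i\notin J$ and $h(i)=j\in J$) is replaced by $\alpha_i/\alpha_j$ many $R_j$-tuples, all with the same underlying set as $\bar a$. The difficulty you flag at the end---running out of available $R_j$-tuples on a fixed underlying set $A$, especially when $n_{\rho(i)}=n_i$---is handled by a counting argument exploiting that tuples are ordered: there are at least $|A|!\ge|A|$ many $n_j$-tuples with underlying set $A$, and since $\delta_f(A)\ge 0$ the total number of replacement tuples needed over $A$ is at most $|A|$ minus those already used by $R_j$. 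Once the predimension is preserved on all finite sets, $PG(\mathcal{M}_f^\pi)=PG(\mathcal{M}_f)$ is immediate, and one finishes by verifying directly that $\mathcal{M}_f^\pi$ has the extension property for $(\mathcal{C}_{f|_J},\le)$, hence is isomorphic to $\mathcal{M}_{f|_J}$.

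The gap in your plan is the final step. Your $\Psi$ yields, in effect, an embedding of pregeometries $PG(\mathcal{M}_f)\hookrightarrow PG(\mathcal{M}_{f|_J})$ (this is exactly the mechanism the paper uses in Lemma~\ref{L520}, and there too it produces only an embedding), and the reverse embedding is trivial since $\mathcal{C}_{f|_J}\subseteq\mathcal{C}_f$. But mutual embeddability of these pregeometries does \emph{not} yield isomorphism: Corollary~\ref{cor39} and Theorem~\ref{T527} together show that $PG(\mathcal{M}_m)$ and $PG(\mathcal{M}_n)$ embed in one another yet are non-isomorphic for $m\ne n$. So ``same finite subpregeometries'' is not enough for your back-and-forth; you need the strong-embedding relations $\unlhd_f$ and $\unlhd_{f|_J}$ on finite pregeometries (in the sense of Section~6) to coincide. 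The nontrivial direction is $A\unlhd_f B\Rightarrow A\unlhd_{f|_J}B$, and your $\Psi$ does not deliver it: $\Psi(\tilde A)$ has strictly larger underlying set than $A$, so it does not witness $A\unlhd_{f|_J}B$. Recovering witnesses on the original underlying sets forces an in-place replacement of relations---which is exactly the paper's $\pi$ construction and its counting argument. In short, your route can likely be completed, but the missing ingredient is precisely the idea the paper uses from the start.
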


\begin{proof} We start by outlining the idea of the proof. We construct a countable structure $\mathcal M_f^{\pi}$ in the restricted language $L_{f_{|J}}$ with the same underlying set as $\mathcal M_f$ in such a way that $PG(\mathcal M_f^{\pi})=PG(\mathcal M_f)$ and such that $\mathcal M_f^{\pi}$ is isomorphic to $\mathcal M_{f_{|J}}$. We construct $\mathcal M_f^{\pi}$ in such a way that for any finite subset $A$ the predimension value remains the same as the predimension value of $A$ calculated in $\mathcal M_f$, in particular the pregeometry remains unchanged.

To achieve this we replace each tuple in $R_i^{\mathcal M_f}$, where $i\in I\setminus J$, by some new $R_j$-tuples, where $j\in J$, and the number of replacement tuples for such a given tuple is exactly $\alpha_i/\alpha_j$, to compensate for the diferent weights. We do this replacement process in steps: first we need to decide which $j\in J$ to use for a given $i\in I\setminus J$, then in each step we fix $j$ and a finite subset $A$ of $\mathcal M_f$ then for each such $i$ corresponding to $j$, we replace each $R_i$-tuple with underlying set $A$ by $\alpha_i/\alpha_j$ new $R_j$-tuples, each one of them with underlying set $A$. An adequate $j$ will be such that $n_j\geq n_i$ and $\alpha_j\vert\alpha_i$. Moreover we need to check that we have enough new replacement tuples to choose from: basically permutation of coordinates will produce enough such tuples, together with the fact that $\emptyset\leq\mathcal M_f$, which assures that we do not have too many `old' tuples to start with. The details are as follows.

Let $h:I\to J$ be a function such that $[i]_\sim\leq[h(i)]_\sim$ and such that $j\in J\Rightarrow h(j)=j$. Such a function exists because $\widetilde J$ is cofinal in $\widetilde I$.
Let $g:\bigcup_{n=1}^{\infty}\mathcal M_f^n\to\mathbbm N\setminus\{0\}$ be defined by $g(a_1,\cdots,a_n)=|\{a_1,\cdots,a_n\}|$. 
Observe that for each $A\subseteq\mathcal M_f$ and $n,m\in\mathbbm N\setminus \{0\}$, if $A^n\cap g^{-1}(m)$ is nonempty then, $$|A^n\cap g^{-1}(m)|\geq m!\geq m$$ because we can permute the $m$ distinct coordinates of an $n$-tuple in $A^n\cap g^{-1}(m)$ obtaining $m!$ tuples.

Now we fix $j\in J$. Let $A\subseteq\mathcal M_f$ be such that $|A|\leq n_j$. Notice that $R_i^A\cap g^{-1}(|A|)$ is exactly the set of $R_i^{\mathcal M_f}$ tuples with underlying set $A$. Our aim is to replace each tuple $\bar a\in R_i^A\cap g^{-1}(|A|)$ for all $i\in h^{-1}(j)\setminus\{j\}$ by one or more tuples in $(A^{n_j}\setminus R_j^A)\cap g^{-1}(|A|)$. We need to prove that we have enough space for this. If $\delta_f(A)\geq 0$ then $\sum_{i\in I}\alpha_i|R_i^A|\leq|A|$, hence $\sum_{i\in h^{-1}(j)}\alpha_i|R_i^A|\leq|A|$ and so $\sum_{i\in h^{-1}(j)\setminus\{j\}}\alpha_i|R_i^A|\leq|A|-\alpha_j|R_j^A|$. But then we have
\begin{enumerate}
\item []$\sum_{i\in h^{-1}(j)\setminus\{j\}}\alpha_i|R_i^A\cap g^{-1}(|A|)|\leq|A|-\alpha_j|R_j^A|$
\item []$\leq|A^{n_j}\cap g^{-1}(|A|)|-\alpha_j|R_j^A|$
\item []$\leq\alpha_j(|A^{n_j}\cap g^{-1}(|A|)|-|R_j^A\cap g^{-1}(|A|)|)$
\item []$=\alpha_j|(A^{n_j}\setminus R_j^A)\cap g^{-1}(|A|)|.$
\end{enumerate}
Thus we have $$\sum_{i\in h^{-1}(j)\setminus\{j\}}\frac{\alpha_i}{\alpha_j}|R_i^A\cap g^{-1}(|A|)|\leq|(A^{n_j}\setminus R_j^A)\cap g^{-1}(|A|)|.$$

Observe that the first term in the above inequality is the number of replacement tuples that are needed for this step and the second term is the number of available new tuples to use as replacements. So we do have space, that is, for each $i\in h^{-1}(j)\setminus\{j\}$ and $\bar a\in R_i^A\cap g^{-1}(|A|)$ we can replace $\bar a$ by $\frac{\alpha_i}{\alpha_j}$ distinct tuples in $(A^{n_j}\setminus R_j^A)\cap g^{-1}(|A|)$.

So now, if $i \in I\setminus J$ and $j = h(i)$ we can replace an $R_i$-tuple $\bar a \in R_i^{\mathcal{M}_f}$ by $\alpha_i/\alpha_j$ $R_j$-tuples $\bar{a}' \in \mathcal{M}_f^{n_j} \setminus R_j^{\mathcal{M}_f}$ with the same underlying set as $\bar{a}$. Doing this does not change the predimension of any subset of $\mathcal{M}_f$. The above calculation shows that we can do this (in an inductive fashion) for all steps corresponding to a choice of $j\in J$ and underlying set $A$, replacing all $\bar{a} \in R_i^{\mathcal{M}_f}$ for all $i\in I\setminus J$. We obtain a structure $\mathcal M_f^\pi$ in the restricted language $L_{f_{|J}}$ with the same underlying set as $\mathcal M_f$ and the same predimension function on finite subsets.

Now let $\pi:\mathcal M_f\to\mathcal M_f^{\pi}$ be the identity function. If $A\subseteq \mathcal M_f$ then $\pi(A)$ means the substructure of $\mathcal M_f^{\pi}$ with $A$ as the underlying set. Reciprocally if $A\subseteq \mathcal M_f^{\pi}$ then $\pi^{-1}(A)$ means the substructure of $\mathcal M_f$ with $A$ as the underlying set. The construction of $\mathcal M_f^{\pi}$ was made so that for each finite $A\subseteq\mathcal M_f$ we have: $$\delta_f(A)=\delta_{f_{|J}}(\pi(A)).$$ 
In other words, for each finite $A\subseteq\mathcal M_f^{\pi}$ we have: $$\delta_{f_{|J}}(A)=\delta_f(\pi^{-1}(A)).$$

In particular, for each finite $A\subseteq\mathcal M_f^{\pi}$ we have $\delta_{f_{|J}}(A)=\delta_f(\pi^{-1}(A))\geq 0$, thus $\mathcal M_f^{\pi}\in\bar {\mathcal C}_{f_{|J}}$. Also, because the predimension function is the same, so is the dimension function. Thus $$PG(\mathcal M_f^{\pi})\simeq PG(\mathcal M_f).$$

We now want to prove that $\mathcal M_f^{\pi}$ is the generic model for the class $\mathcal C_{f_{|J}}$, that is, isomorphic to $\mathcal M_{f_{|J}}$. It remains to show the extension property (the axiom $F2$). The diagram below will help us to follow the proof.

\[
\begin{xy}
\xymatrix{
& & & \mathcal M_f\ar[r]^{\pi} & \mathcal M_f^{\pi}\\
B\ar[r]^{l_1} & B'\ar[r]^{l_2} & l_2(B')\ar[r]^{\pi}\ar[ru]^{\leq} & \pi l_2(B')\ar[ru]^{\leq}\\
A\ar[r]\ar[u]^{\leq} & \pi^{-1}(A)\ar[r]\ar[u]^{\leq}\ar[ru]^{\leq} & A\ar[ru]^{\leq}\\
}
\end{xy}
\]

Let $A\leq B\in\mathcal C_{f_{|J}}\subseteq \mathcal C_f$ and $A\leq\mathcal M_f^{\pi}$.  Let $B'$ be obtained from $B$ just by replacing $A$ by $\pi^{-1}(A)$ and nothing else. Let $l_1:B\to B'$ be the identity function. We still have $\pi^{-1}(A)\leq B'\in\mathcal C_f$ and $\pi^{-1}(A)\leq\mathcal M_f$. In fact we have that $A\leq B$ implies that $\pi^{-1}(A)\leq B'$ because the relations that we add when going from $\pi^{-1}(A)$ to $B'$ are the same we add when going from $A$ to $B$. In particular we get $\emptyset\leq\pi^{-1}(A)\leq B'$ so we may conclude that $B'\in\mathcal C_f$. Also $A\leq \mathcal M_f^{\pi}$ implies $\pi^{-1}(A)\leq\mathcal M_f$ because when going from $\mathcal M_f$ to $\mathcal M_f^{\pi}$ we do not change the predimension values of subsets. 

So we have $\pi^{-1}(A)\leq B'\in\mathcal C_f$ and $\pi^{-1}(A)\leq\mathcal M_f$ and we can apply the extension property of $\mathcal M_f$. Let $l_2:B'\to\mathcal M_f$ be an embedding such that $l_2(B')\leq\mathcal M_f$ and $l_{2|\pi^{-1}(A)}=Id_{|\pi^{-1}(A)}$. Now we apply the $\pi$ function to the chain $\pi^{-1}(A)\leq l_2(B')\leq\mathcal M_f$ obtaining $A\leq\pi l_2 l_1(B)\leq\mathcal M_f^{\pi}$ and we are done.

It is not completely obvious that $\pi l_2l_1$ is an $L_{f_{|J}}$-embedding, so let us check that.

Let $j\in J$. We want to prove that $\bar a\in R_j^B$ if and only if $\pi l_2l_1(\bar a)\in R_j^{\mathcal M_f^{\pi}}$. For $\bar a\subseteq A$ this is clear. For $\bar a\nsubseteq A$ then we observe that:
\begin{enumerate}
\item [] $\bar a\in R_j^B\setminus R_j^A\Leftrightarrow l_1(\bar a)\in R_j^{B'}\setminus R_j^{\pi^{-1}(A)}$ (the changes are made inside $A$)
\item [] $\Leftrightarrow l_2l_1(\bar a)\in R_j^{\mathcal M_f}\setminus R_j^{\pi^{-1}(A)}$ ($l_2$ is an embedding)
\item [] $\Rightarrow\pi l_2l_1(\bar a)\in R_j^{\mathcal M_f^{\pi}}\setminus R_j^A$ (because $j\in J$).
\end{enumerate}
 
But the converse of the last implication also holds. This is because for $i\in I\setminus J$ we have by construction $R_i^B=\emptyset\Rightarrow R_i^{B'}\setminus R_i^{\pi^{-1}(A)}=\emptyset\Leftrightarrow R_i^{l_2(B')}\setminus R_i^{\pi^{-1}(A)}=\emptyset$ so when we apply $\pi$ no tuples are added from $R_j^{l_2(B')}\setminus R_j^{\pi^{-1}(A)}$ to $R_j^{\pi l_2(B')}\setminus R_j^A$. Thus $\pi l_2l_1$ is an embedding.

Finally, by the uniqueness of the generic model we have $\mathcal M_f^{\pi}\simeq\mathcal M_{f_{|J}}$. In particular $PG(\mathcal M_f^{\pi})\simeq PG(\mathcal M_{f_{|J}})$. But we already know that $PG(\mathcal M_f^{\pi})=PG(\mathcal M_f)$, thus we have $PG(\mathcal M_f)\simeq PG(\mathcal M_{f_{|J}})$.
\end{proof}

Note that the proof of this result depends on the fact that we are working with ordered tuples, however a  similar result working with sets rather than tuples can be obtained (cf. Section 4.3 in \cite{MF}).  Of course, we can also ask about the extent to which the hypotheses are necessary. We obtain the following consequences the previous result.

\begin{cor} 
\label{C517}
Let $I$ be a countable set and $f:I\to\mathbbm N\setminus\{0\}\times\mathbbm N\setminus\{0\}$ be a function. Then $PG(\mathcal M_f)$ embeds in $PG(\mathcal M_{\omega})$. In other words, the pregeometry associated to the predimension $$\delta_f(A)=|A|-\sum_{i\in I}\alpha_i|R_i^A|$$ embeds into the pregeometry associated to the predimension $$\delta_{\omega}(A)=|A|-\sum_{n\geq 1}|R_n^A|.$$
\end{cor}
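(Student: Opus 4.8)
The plan is to deduce this from Theorem~\ref{T516} by first enlarging the language so that $\mathcal M_\omega$ reappears as the generic model of a restriction, and then realising $PG(\mathcal M_f)$ as a self-sufficient sub-pregeometry of the larger generic model.

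First I would set $\widehat I = I \sqcup (\mathbbm N\setminus\{0\})$ (a disjoint union) and define $\widehat f \colon \widehat I \to \mathbbm N\setminus\{0\}\times\mathbbm N\setminus\{0\}$ by $\widehat f_{|I} = f$ and $\widehat f(n) = (n,1)$ for $n$ in the second summand, so that $L_{\widehat f}$ contains all the $R_i$ ($i\in I$) together with one $n$-ary relation of weight $1$ for every $n\ge 1$. Taking $J$ to be the second summand, I would check that $\widetilde J$ is cofinal in $\widetilde{\widehat I}$: if $i\in I$ and $f(i)=(n_i,\alpha_i)$ then the index $n_i\in J$ has $\widehat f(n_i)=(n_i,1)$, so $n_i\le n_i$ and $1\mid\alpha_i$ give $[i]_\sim\le[n_i]_\sim$, while indices of $J$ already lie in $J$. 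Theorem~\ref{T516} then yields $PG(\mathcal M_{\widehat f})\simeq PG(\mathcal M_{\widehat f_{|J}})$, and since $\widehat f_{|J}$ is precisely the function $n\mapsto(n,1)$ appearing in part~(ii) of the Notation above, $\mathcal M_{\widehat f_{|J}}=\mathcal M_\omega$. Hence it suffices to embed $PG(\mathcal M_f)$ into $PG(\mathcal M_{\widehat f})$.

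Next I would realise $\mathcal M_f$ as a self-sufficient substructure of $\mathcal M_{\widehat f}$. Any $A\in\mathcal C_f$, regarded as an $L_{\widehat f}$-structure with the new relations empty, satisfies $\delta_{\widehat f}(A')=\delta_f(A')$ for every $A'\subseteq A$; hence $\mathcal C_f\subseteq\mathcal C_{\widehat f}$ and the relation $\le$ agrees when computed in either class. Running the usual chain construction inside the countable model $\mathcal M_{\widehat f}$, I would build an increasing $\le$-chain $A_0\le A_1\le\cdots$ of finite substructures of $\mathcal M_{\widehat f}$, each with all new relations empty, which exhausts a generic model for $(\mathcal C_f,\le)$: whenever a finite $A\le\bigcup_k A_k$ and an extension $A\le B\in\mathcal C_f$ have been scheduled, one applies the extension property of $\mathcal M_{\widehat f}$ to the pair $A\le B\in\mathcal C_{\widehat f}$ (the image of $B$ automatically has empty new relations, since $B$ does) to enlarge the chain. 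Then $\mathcal M':=\bigcup_k A_k$ is an $L_f$-structure satisfying (F1$'$) and (F2) for $(\mathcal C_f,\le)$, so $\mathcal M'\cong\mathcal M_f$; and being a union of a $\le$-chain of self-sufficient finite sets, $\mathcal M'\le\mathcal M_{\widehat f}$.

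Finally I would invoke the routine fact that a self-sufficient inclusion $\mathcal M'\le\mathcal N$ of structures in $\bar{\mathcal C}_{\widehat f}$ induces an $LPI$-embedding $PG(\mathcal M')\hookrightarrow PG(\mathcal N)$: for finite $A\subseteq\mathcal M'$, transitivity of $\le$ and the minimality of self-sufficient closures give $\cl_{\mathcal N}(A)=\cl_{\mathcal M'}(A)$, whence $d_{\mathcal M'}(A)=\delta(\cl_{\mathcal M'}(A))=d_{\mathcal N}(A)$, so independence of a tuple from $\mathcal M'$ is the same whether computed in $\mathcal M'$ or in $\mathcal N$. Applying this to $\mathcal M'=\mathcal M_f\le\mathcal M_{\widehat f}$ and composing with $PG(\mathcal M_{\widehat f})\simeq PG(\mathcal M_\omega)$ gives the desired embedding. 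The only step needing genuine care is the chain construction --- verifying that a self-sufficient substructure of $\mathcal M_{\widehat f}$ can be arranged to be generic for the subclass $(\mathcal C_f,\le)$ --- but this is a standard Fra\"{\i}ss\'e-style amalgamation argument, and everything else is bookkeeping or an appeal to Theorem~\ref{T516}.
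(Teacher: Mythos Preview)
Your proposal is correct and follows essentially the same approach as the paper: extend $I$ by adjoining a disjoint copy of $\mathbbm N\setminus\{0\}$ with weights $1$, embed $\mathcal M_f$ as a self-sufficient substructure of the enlarged generic model via the extension property, and then apply Theorem~\ref{T516} with $J$ equal to the new copy to identify the pregeometry of the enlarged generic with $PG(\mathcal M_\omega)$. You supply more detail than the paper does (the explicit cofinality check, the Fra\"{\i}ss\'e chain, and the verification that a strong embedding preserves dimension), but the strategy is identical.
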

\begin{proof}
We extend $I$ to $\bar I=I\cup I_{\omega}$ with $I_{\omega}=\mathbbm N\setminus\{0\}$ (we can assume $I\cap I_{\omega}=\emptyset$). Then we extend $f$ to $\bar f:\bar I\to\mathbbm N\setminus\{0\}\times\mathbbm N\setminus\{0\}$ by saying $\bar f(i)=(i,1)$ for $i\in I_{\omega}$. Clearly we have that $\mathcal M_f$ embeds (as a closed substructure) in $\mathcal M_{\bar f}$. To be more precise $\mathcal M_f$ can be seen as a structure in $\bar{ \mathcal C}_{\bar f}$ if we interpret the extra relational symbols as the empty set, then we use the extension property of $\mathcal M_{\bar f}$ recursively because $\mathcal M_f$ is countable. Thus $PG(\mathcal M_f)$ embeds in $PG(\mathcal M_{\bar f})$. Now we observe that by Theorem \ref{T516} we have $PG(\mathcal M_{\bar f})\simeq PG(\mathcal M_{\bar f_{|I_{\omega}}})$. But $\mathcal M_{\bar f_{|I_{\omega}}}\simeq\mathcal M_{\omega}$, thus $PG(\mathcal M_{\bar f})\simeq PG(\mathcal M_{\omega})$ and $PG(\mathcal M_f)$ embeds in $PG(\mathcal M_{\omega})$.
\end{proof}

\begin{cor}
Consider the languages $L_{3,4}=\{R_3,R_4\}$ and $L_4=\{R_4\}$. Associated to the language $L_4$ we have the standard construction using the predimension $\delta_4(A)=|A|-|R_4^A|$ obtaining the class $\mathcal C_4$ and the generic $\mathcal M_4$. Associated to the language $L_{3,4}$ we proceed in the same manner as in the standard case using the predimension $\delta_{3,4}(A)=|A|-|R_3^A|-|R_4^A|$, obtaining the class $\mathcal C_{3,4}$ and the generic $\mathcal M_{3,4}$. Then, $$PG(\mathcal M_{3,4})\simeq PG(\mathcal M_4).$$
\end{cor}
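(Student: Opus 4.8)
The plan is to recognize this corollary as a direct instance of Theorem \ref{T516}, so the work is entirely bookkeeping: identify the index set, the function $f$, and the cofinal subset $J$, and then check the one hypothesis that is not automatic. Concretely, I would set $I=\{3,4\}$ and define $f:I\to\mathbbm N\setminus\{0\}\times\mathbbm N\setminus\{0\}$ by $f(3)=(3,1)$ and $f(4)=(4,1)$. With this choice, $L_f$ is exactly $L_{3,4}$, the predimension $\delta_f$ is $\delta_{3,4}$, the class $\mathcal C_f$ is $\mathcal C_{3,4}$, and the generic structure $\mathcal M_f$ is $\mathcal M_{3,4}$. Taking $J=\{4\}\subseteq I$, the restriction $f_{|J}$ recovers the language $L_4$, the predimension $\delta_4$, the class $\mathcal C_4$, and the generic model $\mathcal M_4$.

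The only thing left to verify is the cofinality condition of Theorem \ref{T516}. Since $f$ is injective here, the equivalence relation $\sim$ is trivial, $\widetilde I=\{[3]_\sim,[4]_\sim\}$ has exactly two elements, and $\widetilde J=\{[4]_\sim\}$. Thus cofinality of $\widetilde J$ in $\widetilde I$ amounts to the single inequality $[3]_\sim\leq[4]_\sim$ in the partial order of the theorem, and this holds because $n_3=3\leq 4=n_4$ and $\alpha_4=1$ divides $\alpha_3=1$. (Said in the unpacked form in the theorem statement: for every $i\in I$ there is $j\in J$ with $n_i\leq n_j$ and $\alpha_j\mid\alpha_i$ — take $j=4$.) Applying Theorem \ref{T516} then yields $PG(\mathcal M_f)\simeq PG(\mathcal M_{f_{|J}})$, i.e. $PG(\mathcal M_{3,4})\simeq PG(\mathcal M_4)$, which is the assertion.

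There is essentially no obstacle: this is precisely the motivating example described in the discussion preceding Theorem \ref{T516}. If one preferred a self-contained argument, it would just be the proof of Theorem \ref{T516} specialised to this case — for each finite $A\subseteq\mathcal M_{3,4}$ (which has $|A|\leq 3\leq 4$ whenever it carries an $R_3$-tuple), replace each $R_3$-tuple on $A$ by a single $R_4$-tuple on the same underlying set, there being enough room in $(A^{4}\setminus R_4^A)\cap g^{-1}(|A|)$ by the submodularity/counting bound coming from $\emptyset\leq\mathcal M_{3,4}$ — but invoking the general theorem is both shorter and cleaner, so that is the route I would take.
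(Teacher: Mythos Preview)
Your proposal is correct and follows exactly the paper's approach: the paper's proof is a single sentence observing that, in the notation of Theorem \ref{T516}, $J=\{4\}$ is cofinal in $I=\{3,4\}$, which is precisely what you set up and verify. Your write-up is simply a more detailed unpacking of that one line (plus an optional sketch of the specialised argument), so nothing needs to change.
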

\begin{proof}
In the notation of Theorem \ref{T516} we just observe that $J=\{4\}$ is cofinal in $I=\{3,4\}$.
\end{proof}

Now we want to prove further that $PG(\mathcal M_{\omega})$ embeds in $PG(\mathcal M_3)$. First we need the following lemma.

\begin{nota} \rm
Let $\mathcal M,\mathcal N$ be two $L$-structures and $A$ be a subset of both $\mathcal M$ and $\mathcal N$. We write $A[\mathcal M]$ to denote the substructure of $\mathcal M$ with underlying set $A$ and $A[\mathcal N]$ similarly.
\end{nota}

\begin{lem}
\label{L520}
Let $\mathcal M_{\omega(3)}$ be the generic model associated to the predimension $$\delta_{\omega(3)}(A)=|A|-\sum_{n\geq 3}|R_n^A|.$$ Then, $$PG(\mathcal M_{\omega(3)})\textit{ embeds in }PG(\mathcal M_3).$$
\end{lem}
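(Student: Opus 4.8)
The plan is to mimic the proof of Theorem \ref{T516}, replacing each high-arity relation $R_n$ (for $n \geq 4$) that appears in $\mathcal{M}_{\omega(3)}$ by a collection of $3$-ary relations on the same underlying set, in such a way that the predimension of every finite subset is unchanged. The essential arithmetic point is that if $A \in \mathcal{C}_{\omega(3)}$ and $n \geq 4$, then each tuple $\bar a \in R_n^A$ with underlying set $B = \{a_1,\dots\}$ (so $|B| \leq n$, and in fact $|B| \geq 3$ since a predimension with only arities $\geq 3$ forces $|R_n^A| = 0$ whenever $|A| < 3$) contributes weight $1$ to $\sum_{n\geq 3}|R_n^A|$, and we wish to replace it by a single $3$-ary tuple $\bar a' \in B^3 \setminus R_3^A$ with underlying set $B$. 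Since $\emptyset \leq \mathcal{M}_{\omega(3)}$, for any finite $A$ with $|A| = m \geq 3$ we have $\sum_{n \geq 3}|R_n^A| \leq |A| = m$, while the number of available $3$-tuples with a given underlying set of size $m'$ (for $3 \leq m' \leq m$) is at least $m'! \geq m' \geq 3$ by permuting coordinates; a counting argument exactly parallel to the displayed chain of inequalities in the proof of Theorem \ref{T516} shows there is always enough room. So first I would carry out this replacement inductively over pairs (underlying set $B$, relation $R_n$), obtaining a structure $\mathcal{M}_{\omega(3)}^\pi$ in the language $L_3 = \{R_3\}$ with the same domain and the same predimension (hence the same dimension) function on finite sets; in particular $\mathcal{M}_{\omega(3)}^\pi \in \bar{\mathcal C}_3$ and $PG(\mathcal{M}_{\omega(3)}^\pi) = PG(\mathcal{M}_{\omega(3)})$.

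Next I would try to show that $\mathcal{M}_{\omega(3)}^\pi$ is the generic model for $(\mathcal{C}_3, \leq)$, i.e.\ isomorphic to $\mathcal{M}_3$. The obstruction, and the reason the conclusion here is only an embedding rather than an isomorphism, is that the extension property can fail: given $A \leq \mathcal{M}_{\omega(3)}^\pi$ and $A \leq C \in \mathcal{C}_3$, the pull-back idea from Theorem \ref{T516} does not directly apply because $C$ may use $R_3$ outside $A$ in a way that, when pulled back to the language with all the $R_n$, does not correspond to anything forced. So instead I would only establish a one-sided statement: $\mathcal{M}_3$ embeds into $\mathcal{M}_{\omega(3)}^\pi$ as a $d$-independent (closed) substructure — equivalently, that $PG(\mathcal{M}_3)$ does not embed, but rather that $PG(\mathcal{M}_{\omega(3)})$ embeds in $PG(\mathcal{M}_3)$, which is the direction actually claimed. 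For this, note $\mathcal{M}_{\omega(3)}^\pi \in \bar{\mathcal C}_3$ is a countable structure all of whose finite substructures lie in $\mathcal{C}_3$; by a back-and-forth / chain argument using only the extension property (F2) of the genuine generic model $\mathcal{M}_3$ (building up $\mathcal{M}_{\omega(3)}^\pi$ as a union of a chain $A_0 \leq A_1 \leq \cdots$ of finite self-sufficient pieces and embedding each step into $\mathcal{M}_3$ preserving self-sufficiency, exactly as in the proof of Corollary \ref{C517}), one obtains a self-sufficient embedding $\mathcal{M}_{\omega(3)}^\pi \to \mathcal{M}_3$. A self-sufficient embedding preserves $d$ on finite sets, hence induces a pregeometry embedding $PG(\mathcal{M}_{\omega(3)}^\pi) \hookrightarrow PG(\mathcal{M}_3)$.

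Combining the two steps, $PG(\mathcal{M}_{\omega(3)}) = PG(\mathcal{M}_{\omega(3)}^\pi)$ embeds in $PG(\mathcal{M}_3)$, which is the assertion of the lemma. The main obstacle I anticipate is the bookkeeping in the first step: one must be careful that the replacement is carried out \emph{globally and consistently} across all finite subsets (the same tuple $\bar a$ must be assigned the same replacement regardless of which finite $A \ni \bar a$ one is looking at), and that distinct $R_n$-tuples with the same underlying set receive \emph{distinct} replacement $R_3$-tuples — this is exactly what the inequality $\sum (\text{needed}) \leq (\text{available})$ guarantees, and the factorial bound $m'! \geq m'$ on the number of coordinate-permutations of a tuple is what makes the available side large enough. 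A minor additional point to check is that in the predimension $\delta_{\omega(3)}$ only arities $\geq 3$ occur, so no tuple ever has underlying set of size $1$ or $2$ contributing to the sum, which keeps all replacements within $3$-element (or larger) underlying sets and avoids any degenerate case; also one should record that $\emptyset \leq \mathcal{M}_{\omega(3)}$ (equivalently $\delta_{\omega(3)}(\mathcal{M}_{\omega(3)}) \geq 0$ on all finite subsets) is what caps the number of old tuples. Everything else is a routine adaptation of the argument already given for Theorem \ref{T516} and Corollary \ref{C517}.
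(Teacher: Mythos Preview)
There is a genuine gap in your first step that cannot be repaired by more careful bookkeeping. In Theorem~\ref{T516} the cofinality hypothesis guarantees $n_i \leq n_j$, so one is always replacing a tuple of \emph{lower} arity by tuples of \emph{higher} arity on the same underlying set; this is possible because an $n_i$-tuple with underlying set $B$ can always be padded (by repeating coordinates) to an $n_j$-tuple with the same underlying set $B$. Here you are going in the opposite direction: an $R_n$-tuple $\bar a$ with $n \geq 4$ may have underlying set $B$ with $|B| = 4$ (or more), and there is simply no element of $B^3$ whose underlying set is $B$ --- a $3$-tuple covers at most three points. Your claimed lower bound ``the number of available $3$-tuples with a given underlying set of size $m'$ is at least $m'!$'' is vacuously false for $m' > 3$. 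Even if you relax the requirement and replace $\bar a$ by some $3$-tuple $\bar a' \in B^3$ with underlying set a proper subset $B' \subsetneq B$, the predimension of $B'$ drops while that of $B$ stays the same, so $\delta$ is not preserved on all finite subsets and $PG(\mathcal M_{\omega(3)}^\pi) = PG(\mathcal M_{\omega(3)})$ fails.

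This is precisely why the paper's proof takes a different route: it \emph{adds new points}. For $\bar b = (b_1,\dots,b_n) \in R_n$ with $n\geq 4$, it introduces fresh elements $x_1,\dots,x_{n-1}$ and replaces $\bar b$ by the $3$-ary relations $(b_k,x_k,b_{k+1})$ together with an $(n-1)$-ary ``derivative'' relation $(x_1,\dots,x_{n-1})$, then iterates. Each such step gives an inclusion $\mathcal M \hookrightarrow \mathcal M^{\bar b}$ which is shown (via a short case analysis on subsets) to be a pregeometry embedding; after $\omega$ steps one reaches a structure $\mathcal M^{(\omega)} \in \bar{\mathcal C}_3$ containing $PG(\mathcal M_{\omega(3)})$ as a subpregeometry. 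Your second step --- embedding the resulting $\bar{\mathcal C}_3$-structure strongly into $\mathcal M_3$ via (F2) --- is exactly what the paper then does, so that part of your plan is fine; it is only the first step that needs to be replaced by the ``blow-up with new points'' construction.
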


\begin{proof}
Let $\mathcal M$ be an $L_{\omega(3)}$-structure and $\bar b\in T_4(\mathcal M):=\bigcup_{n\geq 4}R_n^{\mathcal M}$. We define an $L_{\omega(3)}$-structure $\mathcal M^{\bar b}$ obtained from $\mathcal M$ by replacing the tuple $\bar b=(b_1,\cdots,b_n)$ by the relations $$\{(b_1,x_1,b_2),(b_2,x_2,b_3),\cdots,(b_{n-1},x_{n-1},b_n)\}\cup\{(x_1,\cdots,x_{n-1})\}$$ where $\{x_1,\cdots,x_{n-1}\}$ are new distinct points specifically added for this task. So as a set $\mathcal M^{\bar b}=\mathcal M\cup\{x_1,\cdots,x_{n-1}\}$. We say that $\bar b'=(x_1,\cdots,x_{n-1})$ is the derivative of $\bar b$.
\begin{claim}
Let $\mathcal M$ be an $L_{\omega(3)}$-structure and $\bar b=(b_1,\cdots,b_n)\in T_4(\mathcal M)$. Let $A'$ be a finite subset of $\mathcal M^{\bar b}$ and let $\bar b'=(x_1,\cdots,x_{n-1})$ and $A'=A\cup X$ with $A=A'\cap\mathcal M$ and $X=A'\cap\{x_1,\cdots,x_{n-1}\}$. Then, $$\delta_{\omega(3)}(A')\geq\delta_{\omega(3)}(A[\mathcal M]).$$
In particular, if $\mathcal M\in\bar {\mathcal C}_{\omega(3)}$ then $\mathcal M^{\bar b}\in\bar {\mathcal C}_{\omega(3)}$.
\end{claim}
\begin{proof}
Suppose $X\neq\{x_1,\cdots,x_{n-1}\}$. Then, when going from $A[\mathcal M]$ to $A'$ we added $|X|$ points and added at most $|X|$ relations. This is because the relation $(x_1,\cdots,x_{n-1})$ is not added as $X\neq\{x_1,\cdots,x_{n-1}\}$. Thus, in this case $\delta_{\omega(3)}(A')\geq\delta_{\omega(3)}(A[\mathcal M])$.

Suppose that $X=\{x_1,\cdots,x_{n-1}\}$ and $\bar b\nsubseteq A$. Then, when going from $A[\mathcal M]$ to $A'$ we added $n-1$ points and at most $n-1$ relations because as $\bar b\nsubseteq A$, one of the relations $(b_k,x_k,b_{k+1})$ is not added. Thus in this case we have also $\delta_{\omega(3)}(A')\geq\delta_{\omega(3)}(A[\mathcal M])$.

Finally, suppose that $X=\{x_1,\cdots,x_{n-1}\}$ and $\bar b\subseteq A$. Then, when going from $A[\mathcal M]$ to $A'$ we added $n-1$ points and exactly $n$ relations, but we removed the relation $\bar b=(b_1,\cdots,b_n)$. Thus in this case $\delta_{\omega(3)}(A')=\delta_{\omega(3)}(A[\mathcal M])$.

Thus we conclude that $\delta_{\omega(3)}(A')\geq\delta_{\omega(3)}(A[\mathcal M])$ for all finite $A'\subseteq\mathcal M^{\bar b}$. In particular, if $\mathcal M\in\bar {\mathcal C}_{\omega(3)}$ then $\mathcal M^{\bar b}\in\bar {\mathcal C}_{\omega(3)}$. 
\end{proof}
Now we need to prove another claim.

\begin{claim}
Let $\mathcal M\in\bar{\mathcal C}_{\omega(3)}$ and $\bar b\in T_4(\mathcal M)$. Then the inclusion $\mathcal M\to\mathcal M^{\bar b}$ gives an embedding of pregeometries (for example in the $LPI$ language).
\end{claim}
\begin{proof}
We are going to prove that for finite $A\subseteq\mathcal M$ we have $d_{\mathcal M}(A)=d_{\mathcal M^{\bar b}}(A)$.

Let $A\subseteq B'\subseteq\mathcal M^{\bar b}$ with $B'$ finite and $B'=B\cup X$ as in the previous claim. Then $A\subseteq B$ and by the previous claim we have $\delta_{\omega(3)}(B[\mathcal M])\leq\delta_{\omega(3)}(B')$. This shows that $d_{\mathcal M}(A)\leq d_{\mathcal M^{\bar b}}(A)$.

Conversely, let $A\subseteq B\subseteq\mathcal M$ with $B$ finite. If $\bar b\nsubseteq B$ then $\delta_{\omega(3)}(B[\mathcal M])=\delta_{\omega(3)}(B[\mathcal M^{\bar b}])$. If $\bar b\subseteq B$ then we put $B'=B[\mathcal M^{\bar b}]\cup\{x_1,\cdots,x_{n-1}\}$ where $\bar b'=(x_1,\cdots,x_{n-1})$. Then $\delta_{\omega(3)}(B[\mathcal M])=\delta_{\omega(3)}(B')$ as when going from $B[\mathcal M]$ to $B'$ we added $n-1$ points, $n$ relations and remove the relation $\bar b$. 

The fact that we can find a substructure of $\mathcal M^{\bar b}$ containing $A$ with the $\delta_{\omega(3)}$ value equal to $\delta_{\omega(3)}(B[\mathcal M])$ shows that $d_{\mathcal M^{\bar b}}(A)\leq d_{\mathcal M}(A)$. Thus we have $d_{\mathcal M}(A)=d_{\mathcal M^{\bar b}}(A)$ and this proves that the inclusion $\mathcal M\to\mathcal M^{\bar b}$ is an embedding of pregeometries.
\end{proof}

Now we make a construction by recursion, of a sequence of $L_{\omega(3)}$-structures. We start by ordering the set of tuples $T:=T_4(\mathcal M_{\omega(3)})=\bigcup_{n\geq 4}R_n^{\mathcal M_{\omega(3)}}$.

Let $\mathcal M^{(0)}=\mathcal M_{\omega(3)}$. Let $\mathcal M^{(1)}=(\mathcal M^{(0)})^{\bar c_0}$ where $\bar c_0$ is the first element of $T$.

Suppose that we have constructed $\mathcal M^{(i+1)}=(\mathcal M^{(i)})^{\bar c_i}$. Now if $\bar c_i$ has $n$-arity greater or equal than $4$, then we set $\bar c_{i+1}=\bar c_i'$, otherwise we set $\bar c_{i+1}$ equal to the next unused element of $T$. Finally we put $\mathcal M^{(i+2)}=(\mathcal M^{(i+1)})^{\bar c_{i+1}}$.

We have constructed a sequence of $L_{\omega(3)}$-structures $\mathcal M^{(i)}$ with $i\in\omega$. Moreover, if we iterate this procedure $\omega$ steps we end up with a structure $\mathcal M^{(\omega)}$ only with $3$-tuples. By the previous claims $\mathcal M^{(i)}\in\bar{\mathcal C}_{\omega(3)}$ and for each $i\in\omega$ the inclusion function $\mathcal M^{(i)}\to\mathcal M^{(i+1)}$ gives an embedding of pregeometries. Finally we can define a pregeometry $P^{(\omega)}=\bigcup_{i\in\omega}PG(\mathcal M^{(i)})$. Let $d^{(\omega)}$ be the dimension function of $P^{(\omega)}$.

We need to prove the following claim.
\begin{claim}
Given a finite $A\subseteq\mathcal M^{(\omega)}$ then for all $i$ large enough we have $A[\mathcal M^{(i)}]=A[\mathcal M^{(\omega)}]$. In particular $\mathcal M^{(\omega)}\in\bar{\mathcal C}_3\subseteq\bar{\mathcal C}_{\omega(3)}$.
\end{claim}
\begin{proof}
Let $i_0$ be large enough so that $A\subseteq\mathcal M^{(i_0)}$. Now $A[\mathcal M^{(i_0)}]\in\mathcal C_{\omega(3)}$ so $|A|-\sum_{n\geq 4}|R_n^{A[\mathcal M^{(i_0)}]}|\geq 0$, in particular $\bigcup_{n\geq 4}R_n^{A[\mathcal M^{(i_0)}]}$ is finite. Thus, by our construction there is $i_1\geq i_0$ such that $A[\mathcal M^{(i_1)}]$ has only $3$-tuples. Now it is clear that for all $i\geq i_1$ we have $A[\mathcal M^{(i)}]=A[\mathcal M^{(\omega)}]$. In particular $\mathcal M^{(\omega)}\in\bar{\mathcal C}_3\subseteq\bar{\mathcal C}_{\omega(3)}$. 
\end{proof}

We need to prove one last claim.
\begin{claim}
$P^{(\omega)}=PG(\mathcal M^{(\omega)})$.
\end{claim}
\begin{proof}
Let $A\subseteq\mathcal M^{(\omega)}$ and $A$ finite. We want to prove that $d^{(\omega)}(A)=d_{\mathcal M^{(\omega)}}(A)$. By our last claim there is $i$ such that $\cl_{\mathcal M^{(\omega)}}(A)[\mathcal M^{(\omega)}]=\cl_{\mathcal M^{(\omega)}}(A)[\mathcal M^{(i)}]$. Then,
\begin{enumerate}
\item[] $d^{(\omega)}(A)=d_{\mathcal M^{(i)}}(A)$
\item[] $\leq d_{\mathcal M^{(i)}}(\cl_{\mathcal M^{(\omega)}}(A))$
\item[] $\leq\delta_{\omega(3)}(\cl_{\mathcal M^{(\omega)}}(A)[\mathcal M^{(i)}])$
\item[] $=\delta_{\omega(3)}(\cl_{\mathcal M^{(\omega)}}(A)[\mathcal M^{(\omega)}])$
\item[] $=d_{\mathcal M^{(\omega)}}(\cl_{\mathcal M^{(\omega)}}(A))$
\item[] $=d_{\mathcal M^{(\omega)}}(A)$
\end{enumerate}

Thus $d^{(\omega)}(A)\leq d_{\mathcal M^{(\omega)}}(A)$.

Now we want to prove the inequality in the other direction.
Let $B$ be a finite set with $A\subseteq B\subseteq\mathcal M^{(i)}$. Let $B^{(i)}=B$. Suppose that we have constructed $B^{(j)}$ for some $j\geq i$, then we obtain $B^{(j+1)}$ from $B^{(j)}$ in the same manner as we obtain $\mathcal M^{(j+1)}$ from $\mathcal M^{(j)}$. More precisely, if $\bar c_j\nsubseteq B^{(j)}$ then $B^{(j+1)}=B^{(j)}$, if $\bar c_j\subseteq B^{(j)}$ then we remove $\bar c_j=(b_1,\cdots,b_n)$ and add the relations $\{(b_1,x_1,b_2),\cdots,(b_{n-1},x_{n-1},b_n)\}\cup\{x_1,\cdots,x_{n-1})\}$ where $\bar c_j'=(x_1,\cdots,x_{n-1})$. Note that $\{x_1,\cdots,x_{n-1}\}$ are really new points in $B^{(j+1)}\setminus B^{(j)}$ as $\{x_1,\cdots,x_{n-1}\}\cap\mathcal M^{(j)}=\emptyset$.

We have constructed $B^{(j)}$ with $i\leq j\in\omega$. Clearly the sequence stabilizes for some $k$ for which $B^{(k)}$ has only $3$-tuples (as in the proof of last claim). Moreover, $B^{(k)}$ is a substructure of $\mathcal M^{(\omega)}$. 

It is now easy to see that for each $j\geq i$ we have $$\delta_{\omega(3)}(B^{(j)}[\mathcal M^{(j)}])=\delta_{\omega(3)}(B^{(j+1)}[\mathcal M^{(j+1)}])$$ in particular we have, $$\delta_{\omega(3)}(B[\mathcal M^{(i)}])=\delta_{\omega(3)}(B^{(i)}[\mathcal M^{(i)}])=\delta_{\omega(3)}(B^{(k)}[\mathcal M^{(k)}])=\delta_{\omega(3)}(B^{(k)}[\mathcal M^{(\omega)}]).$$
The fact that we were able to find a finite substructure $B^{(k)}$ of $\mathcal M^{(\omega)}$ with $A\subseteq B^{(k)}\subseteq\mathcal M^{(\omega)}$ and $\delta_{\omega(3)}(B^{(k)}[\mathcal M^{(\omega)}])=\delta_{\omega(3)}(B[\mathcal M^{(i)}])$ proves that $d_{\mathcal M^{(\omega)}}(A)\leq d_{\mathcal M^{(i)}}(A)=d^{(\omega)}(A)$.

We have proved that $d^{(\omega)}(A)=d_{\mathcal M^{(\omega)}}(A)$ for all finite $A\subseteq\mathcal M^{(\omega)}$. Thus, $P^{(\omega)}=PG(\mathcal M^{(\omega)})$. 
\end{proof}

Now we observe that as $\mathcal M^{(\omega)}\in\bar{\mathcal C}_3$, then there exists a strong embedding $g:\mathcal M^{(\omega)}\stackrel{\leq}{\longrightarrow}\mathcal M_3$, (that is, $g$ is an embedding with $g(\mathcal M^{(\omega)})\leq\mathcal M_3$). For this we use the extension property (axiom $F2$) of $\mathcal M_3$ recursively ($\mathcal M^{(\omega)}$ is countable). But because $g$ is a strong embedding then $g$ is also an embedding of pregeometries, so we have that $PG(\mathcal M^{(\omega)})$ embeds in $PG(\mathcal M_3)$.

Finally, we recall all we have proved: $PG(\mathcal M_{\omega(3)})$ is a subpregeometry of $P^{(\omega)}$, $P^{(\omega)}=PG(\mathcal M^{(\omega)})$ and $PG(\mathcal M^{(\omega)})$ embeds in $PG(\mathcal M_3)$. Thus $PG(\mathcal M_{\omega(3)})$ embeds in $PG(\mathcal M_3)$.
\end{proof}

\begin{rem} \rm
Note that in contrast to the other results of this section, the previous lemma also holds if we work with unordered tuples.
\end{rem}

\begin{lem}
\label{L522}
Let $n\geq 3$ be a natural number. Then, $$PG(\mathcal M_3)\textit{ embeds in }PG(\mathcal M_n).$$
\end{lem}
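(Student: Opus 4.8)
The goal is to embed $PG(\mathcal{M}_3)$ into $PG(\mathcal{M}_n)$ for $n \geq 3$. The natural idea is to mimic the strategy of Lemma \ref{L520}: take each $3$-tuple $\bar b = (b_1,b_2,b_3)$ related by $R_3$ in an $L_3$-structure $\mathcal{M}$ and replace it by an $R_n$-tuple using $n-3$ fresh new points, say replacing $(b_1,b_2,b_3)$ by the single relation $(b_1,b_2,b_3,x_1,\dots,x_{n-3})$ together with — and this is the delicate point — enough extra relational structure on the new points $x_1,\dots,x_{n-3}$ to make the predimension bookkeeping come out right. Since we remove one $R_3$-tuple (gaining $1$ in predimension on the relation side) and add $n-3$ new points (gaining $n-3$) while adding one $R_n$-tuple (losing $1$), the net change on a set containing all of $\bar b$ and all the new points is $+(n-3)+1-1 = n-3 > 0$ unless $n = 3$. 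So, unlike in Lemma \ref{L520}, a single replacement relation does not preserve predimension: we must add further $n$-ary relations among $\{b_1,b_2,b_3,x_1,\dots,x_{n-3}\}$ (or among the $x_i$ alone, padded out by repetition or by using some of the $b_i$) so as to lose exactly $n-3$ more, i.e. add $n-3$ additional $R_n$-tuples, each with underlying set contained in $\{b_1,\dots,x_{n-3}\}$ and each genuinely using at least one new point (so that subsets not containing all new points are unaffected). One must also arrange that these tuples are distinct and that the same local analysis as in Lemma \ref{L520} — case-splitting on whether $X = \{x_1,\dots,x_{n-3}\}$ and whether $\bar b \subseteq A$ — shows $\delta_n(A') \geq \delta_3(A[\mathcal{M}])$ for every finite $A' \subseteq \mathcal{M}^{\bar b}$, with equality precisely when all new points and all of $\bar b$ are present.

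\textbf{Key steps.} First I would fix, once and for all, a uniform recipe assigning to an $R_3$-tuple $\bar b$ a finite set of $R_n$-tuples on $\{b_1,b_2,b_3\} \cup \{x_1,\dots,x_{n-3}\}$ (new points) satisfying: (a) exactly $n-2$ relations are added in total; (b) every added tuple uses at least one $x_i$; (c) every $x_i$ appears in at least one added tuple; (d) one can recover $\bar b$ and its derivative $\bar b' := (x_1,\dots,x_{n-3})$ from the local configuration. Then I would prove the analogue of the first Claim in Lemma \ref{L520}: for finite $A' = A \cup X \subseteq \mathcal{M}^{\bar b}$ with $A = A' \cap \mathcal{M}$, $X = A' \cap \{x_1,\dots,x_{n-3}\}$, we have $\delta_n(A') \geq \delta_3(A[\mathcal{M}])$, with equality iff $X = \{x_1,\dots,x_{n-3}\}$ and $\bar b \subseteq A$; the argument is the same counting-by-cases as before, using (b) and (c) to control how many new relations actually appear when $X$ is proper or $\bar b \not\subseteq A$. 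This gives $\mathcal{M} \in \bar{\mathcal{C}}_3 \Rightarrow \mathcal{M}^{\bar b} \in \bar{\mathcal{C}}_3$ (note $\bar{\mathcal{C}}_3 = \bar{\mathcal{C}}_n$ is not literally true, but $\mathcal{M}^{\bar b} \in \bar{\mathcal{C}}_n$ is what we need, using that $\delta_n$-values of subsets are bounded below by $\delta_3$-values of their traces in $\mathcal{M}$). Next, the analogue of the second Claim: the inclusion $\mathcal{M} \to \mathcal{M}^{\bar b}$ is an embedding of pregeometries, i.e. $d_{\mathcal{M}}(A) = d_{\mathcal{M}^{\bar b}}(A)$ for finite $A \subseteq \mathcal{M}$ — again by the same two inequalities, one using the Claim, the other by exhibiting, for $A \subseteq B \subseteq \mathcal{M}$, the set $B' = B \cup \{x_1,\dots,x_{n-3}\}$ (when $\bar b \subseteq B$) with $\delta_n(B') = \delta_3(B[\mathcal{M}])$.

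Then I would carry out the recursion exactly as in Lemma \ref{L520}: enumerate $T := R_3^{\mathcal{M}_3}$, successively replace each $R_3$-tuple (and then re-enumerate through the freshly created $R_3$-tuples — but here there are none created, since all new relations are $n$-ary; so actually the recursion is simpler than in \ref{L520} and terminates after $\omega$ steps with no nested derivatives to chase, provided $n > 3$; the case $n = 3$ is trivial). At the limit $\mathcal{M}^{(\omega)}$ we get a structure in $\bar{\mathcal{C}}_n$ with only $R_n$-relations and with $PG(\mathcal{M}^{(\omega)}) = \bigcup_i PG(\mathcal{M}^{(i)}) \supseteq PG(\mathcal{M}_3)$ — the two auxiliary claims (the $=$ between the union pregeometry and $PG(\mathcal{M}^{(\omega)})$, and the stabilization $A[\mathcal{M}^{(i)}] = A[\mathcal{M}^{(\omega)}]$ for $i$ large) go through verbatim, since again each finite $A$ meets only finitely many non-$R_n$ relations. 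Finally, since $\mathcal{M}^{(\omega)} \in \bar{\mathcal{C}}_n$ and is countable, the extension property (F2) of $\mathcal{M}_n$ gives a strong embedding $g : \mathcal{M}^{(\omega)} \stackrel{\leq}{\to} \mathcal{M}_n$, and strong embeddings are embeddings of pregeometries; composing, $PG(\mathcal{M}_3)$ embeds in $PG(\mathcal{M}^{(\omega)})$ embeds in $PG(\mathcal{M}_n)$.

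\textbf{Main obstacle.} The crux is step one: designing the replacement gadget so that the predimension is \emph{exactly} preserved (not just not decreased) when the whole gadget is present, while being genuinely "increasing" on all proper sub-configurations — in particular ensuring there really is room for $n-2$ distinct $R_n$-tuples on only $n$ points all of which involve at least one of the $n-3$ new points and collectively cover those new points (a small pigeonhole check, easy for $n$ not too small but worth stating). A secondary subtlety, as in \ref{L520}, is the uniqueness/recoverability needed so that the construction is well-defined and the limit structure genuinely lies in $\bar{\mathcal{C}}_n$ rather than merely $\bar{\mathcal{C}}_{\omega(3)}$; but since we are free to choose the gadget, this is a matter of making a clean choice rather than a real difficulty.
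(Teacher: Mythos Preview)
Your plan can be made to work --- a gadget such as $\{(b_1,b_2,b_3,x_1,\dots,x_{n-3})\} \cup \{(x_i,\dots,x_i): 1 \leq i \leq n-3\}$ satisfies your conditions (a)--(c), and the case analysis goes through (though your ``equality iff'' is slightly off: with this gadget you also get equality when $\bar b \not\subseteq A$, but the inequality $\delta_n(A') \geq \delta_3(A[\mathcal M])$ is all that is needed). The recursion is indeed simpler than in Lemma~\ref{L520} since no derivatives arise.

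However, you are working far too hard. The paper's proof is a single paragraph: since tuples are ordered and may repeat coordinates, one simply replaces each $(a_1,a_2,a_3) \in R_3^{\mathcal M_3}$ by the $n$-tuple $(a_1,a_2,a_3,a_3,\dots,a_3)$, obtaining an $L_n$-structure $\mathcal M_3^h$ with \emph{exactly} the same predimension on every finite subset --- so $\mathcal M_3^h \in \bar{\mathcal C}_n$ and $PG(\mathcal M_3^h) = PG(\mathcal M_3)$ immediately, with no new points, no gadget, and no recursion. A strong embedding $\mathcal M_3^h \stackrel{\leq}{\to} \mathcal M_n$ then finishes as you describe. The asymmetry with Lemma~\ref{L520} is that decreasing arity is genuinely hard (you cannot shrink a tuple without changing its underlying set), whereas increasing arity is trivial via padding. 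Your approach imports the machinery of the hard direction into the easy one.
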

\begin{proof}
Let $\mathcal M_3^h$ be the $L_n$-structure obtained from $\mathcal M_3$ by replacing each relation $(a_1,a_2,a_3)\in R_3^{\mathcal M_3}$ by the relation $(a_1,a_2,a_3,a_4,\cdots,a_n)$ where $a_i=a_3$ for all $i\geq 4$. Then, clearly, $\mathcal M_3^h\in\bar{\mathcal C}_n$ and $PG(\mathcal M_3^h)=PG(\mathcal M_3)$, because the predimension function is the same in both structures. But as $\mathcal M_3^h\in\bar{\mathcal C}_n$ we can use recursively the extension property to build a strong embedding $\mathcal M_3^h\stackrel{\leq}{\longrightarrow}\mathcal M_n$, that is, with $g(\mathcal M_3^h)\leq\mathcal M_n$. But then $g$ is also an embedding of pregeometries, thus $PG(\mathcal M_3)=PG(\mathcal M_3^h)$ embeds in $PG(\mathcal M_n)$. 
\end{proof}

We obtain the following theorem.
\begin{thm}
\label{T523}
Let $I$ be a countable set and $f:I\to\mathbbm N\setminus\{0\}\times\mathbbm N\setminus\{0\}$ be a function and $n\geq 3$ a natural number. Then there exists a chain of embeddings of pregeometries as in the following diagram: $$PG(\mathcal M_f)\to PG(\mathcal M_{\omega})\to PG(\mathcal M_3)\to PG(\mathcal M_n)\to PG(\mathcal M_{\omega}).$$
\end{thm}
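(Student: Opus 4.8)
The plan is to assemble the chain by reading off each arrow from results already proved in this section, using that a composite of two embeddings of pregeometries, and a composite of an isomorphism with an embedding, is again an embedding of pregeometries.

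First I would dispose of the two outer arrows. The arrow $PG(\mathcal M_f)\to PG(\mathcal M_\omega)$ is exactly Corollary \ref{C517}. The final arrow $PG(\mathcal M_n)\to PG(\mathcal M_\omega)$ is again Corollary \ref{C517}, this time applied to the function $f$ with $I$ a singleton and $f(i)=(n,1)$, for which $\mathcal M_f=\mathcal M_n$.

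The arrow $PG(\mathcal M_3)\to PG(\mathcal M_n)$ is Lemma \ref{L522}. The remaining arrow $PG(\mathcal M_\omega)\to PG(\mathcal M_3)$ needs one small preliminary observation, since Lemma \ref{L520} is stated for $\mathcal M_{\omega(3)}$ rather than $\mathcal M_\omega$: these two generic models have isomorphic pregeometries by Theorem \ref{T516}. Indeed $\mathcal M_\omega=\mathcal M_f$ for $I=\mathbb N\setminus\{0\}$ and $f(i)=(i,1)$; the equivalence relation $\sim$ of Theorem \ref{T516} is then trivial, and $[i]_\sim\leq[j]_\sim$ holds iff $i\leq j$. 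Taking $J=\{i\in I:i\geq 3\}$, the set $\widetilde J$ is cofinal in $\widetilde I$ (use $j=3$ for $i=1,2$ and $j=i$ for $i\geq 3$), and $f_{|J}$ is precisely the function giving the predimension $\delta_{\omega(3)}$, so $\mathcal M_{f_{|J}}=\mathcal M_{\omega(3)}$. Hence $PG(\mathcal M_\omega)\simeq PG(\mathcal M_{\omega(3)})$, and composing this isomorphism with the embedding $PG(\mathcal M_{\omega(3)})\to PG(\mathcal M_3)$ of Lemma \ref{L520} produces the desired arrow.

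There is no real obstacle in this argument; the only step requiring any care is checking the cofinality hypothesis of Theorem \ref{T516} used to identify $PG(\mathcal M_\omega)$ with $PG(\mathcal M_{\omega(3)})$, and this is immediate.
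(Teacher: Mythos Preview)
Your proof is correct and follows essentially the same route as the paper: the first arrow is Corollary~\ref{C517}, the middle arrow is obtained by combining the isomorphism $PG(\mathcal M_\omega)\simeq PG(\mathcal M_{\omega(3)})$ from Theorem~\ref{T516} with Lemma~\ref{L520}, and the third arrow is Lemma~\ref{L522}. The only cosmetic difference is the final arrow: the paper argues directly that $\mathcal M_n\in\bar{\mathcal C}_\omega$ and builds a strong embedding into $\mathcal M_\omega$ via the extension property, whereas you invoke Corollary~\ref{C517} for the singleton $f$ with $f(i)=(n,1)$; these amount to the same thing.
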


\begin{proof}
By Corollary \ref{C517} we get $PG(\mathcal M_f)\to PG(\mathcal M_{\omega})$. By Theorem \ref{T516} we have $PG(\mathcal M_{\omega})\simeq PG(\mathcal M_{\omega(3)})$. By Lemma \ref{L520} we have that $PG(\mathcal M_{\omega(3)})$ embeds in $PG(\mathcal M_3)$. By Lemma \ref{L522}  we get $PG(\mathcal M_3)\to PG(\mathcal M_n)$. Finally, $\mathcal M_n\in\bar{\mathcal C}_n\subseteq\bar{\mathcal C}_{\omega}$ and $\mathcal M_n$ is countable, so we can use the extension property recursively to build an strong embedding $g:\mathcal M_n\to\mathcal M_{\omega}$. Then $g:PG(\mathcal M_n)\to PG(\mathcal M_{\omega})$ is an embedding of pregeometries. This conclude the proof.
\end{proof}

\begin{cor}\label{cor39}
Let $m,n\geq 3$ be natural numbers. Then $PG(\mathcal M_m)$ and $PG(\mathcal M_n)$ embed in each other. In particular, in the $LPI$ language of pregeometries we have that $PG(\mathcal M_m)$ and $PG(\mathcal M_n)$ have the same isomorphism types of finite subpregeometries.
\end{cor}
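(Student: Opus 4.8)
The plan is to deduce Corollary \ref{cor39} directly from Theorem \ref{T523}, which already supplies all the embeddings we need. First I would specialize Theorem \ref{T523} twice. Taking $f = f_m$ (the function defining $L_m$, so $\mathcal{M}_f = \mathcal{M}_m$) and the natural number $n$, the chain gives in particular an embedding of pregeometries $PG(\mathcal{M}_m) \to PG(\mathcal{M}_\omega) \to PG(\mathcal{M}_3) \to PG(\mathcal{M}_n)$; composing these yields $PG(\mathcal{M}_m) \hookrightarrow PG(\mathcal{M}_n)$. Swapping the roles of $m$ and $n$ (take $f = f_n$ and the natural number $m$) gives $PG(\mathcal{M}_n) \hookrightarrow PG(\mathcal{M}_m)$ by the same composition. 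Thus $PG(\mathcal{M}_m)$ and $PG(\mathcal{M}_n)$ embed in each other, which is the first assertion.

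Next I would address the "in particular" clause. Here I would first observe that in the language $LPI$ a finite subpregeometry of $PG(\mathcal{M})$ is exactly a finite substructure, and that an embedding of pregeometries $g : PG(\mathcal{M}_m) \to PG(\mathcal{M}_n)$ is in particular an $LPI$-embedding: it preserves and reflects the independence predicates $I_k$ on finite tuples, since it preserves the dimension function on finite subsets. Hence every finite subpregeometry of $PG(\mathcal{M}_m)$ is isomorphic (via $g$) to a finite subpregeometry of $PG(\mathcal{M}_n)$, and by symmetry the converse holds. Therefore the two pregeometries realize exactly the same isomorphism types of finite substructures in $LPI$.

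The argument is essentially a bookkeeping exercise; there is no real obstacle, since Theorem \ref{T523} has done the substantive work. The only point requiring a word of care is the remark, already made in the excerpt just before Section 3, that a pregeometry (equivalently its $LPI$-structure) is completely determined by its finite independent sets, so that "embedding of pregeometries" in the sense used throughout Section 3 coincides with "$LPI$-embedding"; once this is noted, the transfer of finite subpregeometry types along the mutual embeddings is immediate. I would state this identification explicitly and then conclude.

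\begin{proof}
Apply Theorem \ref{T523} with $f = f_m$ (so that $\mathcal{M}_f = \mathcal{M}_m$) and with the natural number $n$: the resulting chain of embeddings of pregeometries contains, after composition, an embedding $PG(\mathcal{M}_m) \to PG(\mathcal{M}_n)$. Applying Theorem \ref{T523} again with $f = f_n$ and the natural number $m$ yields an embedding $PG(\mathcal{M}_n) \to PG(\mathcal{M}_m)$. Hence $PG(\mathcal{M}_m)$ and $PG(\mathcal{M}_n)$ embed in each other.

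For the last assertion, recall that a pregeometry, viewed as an $LPI$-structure, is determined by its finite independent sets; in particular an embedding of pregeometries is the same thing as an $LPI$-embedding, and it carries each finite subpregeometry isomorphically onto a finite subpregeometry of the target. So from the embedding $PG(\mathcal{M}_m) \to PG(\mathcal{M}_n)$ we see that every isomorphism type of finite subpregeometry occurring in $PG(\mathcal{M}_m)$ also occurs in $PG(\mathcal{M}_n)$, and the embedding in the other direction gives the reverse inclusion. Thus $PG(\mathcal{M}_m)$ and $PG(\mathcal{M}_n)$ have the same isomorphism types of finite subpregeometries.
\end{proof}
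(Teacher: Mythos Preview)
Your proof is correct and follows essentially the same approach as the paper: the paper simply invokes Theorem~\ref{T523} to obtain the chain $PG(\mathcal M_m)\to PG(\mathcal M_{\omega})\to PG(\mathcal M_3)\to PG(\mathcal M_n)$ and leaves the ``in particular'' clause implicit. Your write-up is a bit more explicit about the transfer of finite subpregeometry types along $LPI$-embeddings, but the substance is identical.
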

\begin{proof}
We use Theorem \ref{T523} to build a chain of embeddings $$PG(\mathcal M_m)\to PG(\mathcal M_{\omega})\to PG(\mathcal M_3)\to PG(\mathcal M_n).$$ 
\end{proof}

\section{Pregeometries and different arities} 

In the previous section we have seen that for natural numbers $n,m\geq 3$  the pregeometries $PG(\mathcal M_n)$ and $PG(\mathcal M_m)$ embed in each other. The main result of this section is that if $m \neq n$, then $PG(\mathcal M_n)$ and $PG(\mathcal M_m)$ are not isomorphic.  In order to prove this we need two technical lemmas. The first of these lemmas is in fact almost trivial.
 
 \begin{lem}[First Changing Lemma]
\label{L525}
Keep the notation of the previous sections. In particular let $I$ be a countable set and $f:I\to\mathbbm N\setminus\{0\}\times\mathbbm N\setminus\{0\}$. Suppose $\mathcal M\in\bar{\mathcal C}_f$, $A\leq\mathcal M$ and $A'\in\bar{\mathcal C}_f$ where $A'$ has the same underlying set as $A$. Let $\mathcal M'$ be the structure obtained from $\mathcal M$ by replacing $A$ by $A'$, where by this we mean for each $i\in I$ we have $R_i^{\mathcal M'}=(R_i^{\mathcal M}\setminus R_i^A)\cup R_i^{A'}$. Then
$$A'\leq\mathcal M'\in\bar{\mathcal C}_f.$$
\end{lem}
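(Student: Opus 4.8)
The plan is to show the two assertions $\mathcal{M}' \in \bar{\mathcal{C}}_f$ and $A' \leq \mathcal{M}'$ by reducing every computation of a predimension in $\mathcal{M}'$ to one in $\mathcal{M}$, using that the relations of $\mathcal{M}'$ and $\mathcal{M}$ differ only inside the common underlying set of $A$ and $A'$. The key bookkeeping observation is the following: for any finite $B \subseteq \mathcal{M}'$ (equivalently $B \subseteq \mathcal{M}$, since they have the same underlying set), write $B_A = B \cap A$. Then passing from $\mathcal{M}$ to $\mathcal{M}'$ only changes the relations whose coordinates all lie in $A$, so the relations of $B$ computed in $\mathcal{M}'$ that are \emph{not} already relations of $B_A$ in $A'$ coincide with the corresponding relations computed in $\mathcal{M}$. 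This gives the identity
$$\delta_f^{\mathcal{M}'}(B) - \delta_f^{\mathcal{M}}(B_A[\mathcal{M}']) = \delta_f^{\mathcal{M}}(B) - \delta_f^{\mathcal{M}}(B_A),$$
where $B_A[\mathcal{M}']$ means $B_A$ with the relations inherited from $A'$; equivalently, the ``new part'' $\delta(B) - \delta(B\cap A)$ contributed by points and relations outside $A$ is the same whether computed in $\mathcal{M}$ or in $\mathcal{M}'$. I would state and prove this as a short preliminary remark, since it is the engine for both parts.

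First I would prove $\mathcal{M}' \in \bar{\mathcal{C}}_f$. Take any finite $B \subseteq \mathcal{M}'$ and set $B_A = B \cap A$. By the preliminary identity, $\delta_f^{\mathcal{M}'}(B) = \delta_{A'}(B_A) + \big(\delta_f^{\mathcal{M}}(B) - \delta_f^{\mathcal{M}}(B_A)\big)$. Now $A' \in \bar{\mathcal{C}}_f$ gives $\delta_{A'}(B_A) \geq 0$; and since $A \leq \mathcal{M}$ and $A \subseteq B_A \cup A = $ (a finite subset of $\mathcal{M}$ containing $A$)... more precisely, $A \subseteq B \cup A \subseteq \mathcal{M}$, so self-sufficiency of $A$ in $\mathcal{M}$ gives $\delta_f^{\mathcal{M}}(A) \leq \delta_f^{\mathcal{M}}(B \cup A)$, and by submodularity applied to $B$ and $A$ inside $\mathcal{M}$, $\delta_f^{\mathcal{M}}(B) - \delta_f^{\mathcal{M}}(B \cap A) \geq \delta_f^{\mathcal{M}}(B \cup A) - \delta_f^{\mathcal{M}}(A) \geq 0$. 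Hence $\delta_f^{\mathcal{M}'}(B) \geq 0$, so $\mathcal{M}' \in \bar{\mathcal{C}}_f$.

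Next I would prove $A' \leq \mathcal{M}'$. Take finite $B$ with $A' \subseteq B \subseteq \mathcal{M}'$; I must show $\delta_f^{\mathcal{M}'}(A') \leq \delta_f^{\mathcal{M}'}(B)$. Here $B_A = B \cap A = A'$ as sets, with relations from $A'$, so the preliminary identity reads $\delta_f^{\mathcal{M}'}(B) = \delta_{A'}(A') + \big(\delta_f^{\mathcal{M}}(B) - \delta_f^{\mathcal{M}}(A)\big)$, where on the right $A$ carries the $\mathcal{M}$-relations and $\delta_{A'}(A') = \delta_f^{\mathcal{M}'}(A')$. Thus the claim is equivalent to $\delta_f^{\mathcal{M}}(A) \leq \delta_f^{\mathcal{M}}(B)$ — but $B$ (with $\mathcal{M}$-relations) is a finite subset of $\mathcal{M}$ containing $A$, so this is exactly self-sufficiency of $A$ in $\mathcal{M}$. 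Done.

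I do not expect a genuine obstacle here — as the authors note, the lemma is ``almost trivial.'' The only point requiring a little care is making the preliminary identity precise: one must check that every relation $R_i$ of a finite $B \subseteq \mathcal{M}'$ is either a relation of $B \cap A$ inside $A'$, or a relation that is unaffected by the surgery and hence present in $\mathcal{M}$ as well (and conversely). This is immediate from the definition $R_i^{\mathcal{M}'} = (R_i^{\mathcal{M}} \setminus R_i^A) \cup R_i^{A'}$ together with the fact that $R_i^A$ and $R_i^{A'}$ consist exactly of the tuples all of whose coordinates lie in the common domain $A$. Everything else is a direct application of submodularity and of the hypothesis $A \leq \mathcal{M}$.
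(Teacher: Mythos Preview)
Your approach is essentially the paper's: both hinge on the observation that $\delta(B[\mathcal{M}']) - \delta((B\cap A)[\mathcal{M}']) = \delta(B[\mathcal{M}]) - \delta((B\cap A)[\mathcal{M}])$ for finite $B$ (the ``new part'' of the predimension outside $A$ is unaffected by the surgery), and then combine this with $A' \in \bar{\mathcal{C}}_f$ and $A \leq \mathcal{M}$.

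One point to watch, however: the lemma allows $A$ to be infinite, and it is later applied with infinite $A$ (for instance in the proof of Theorem~\ref{T527}, where the role of $A$ is played by a $d$-closure $Z_i$). Your submodularity step writes $\delta_f^{\mathcal{M}}(A)$ and $\delta_f^{\mathcal{M}}(B \cup A)$, which are undefined when $A$ is infinite; likewise, your argument for $A' \leq \mathcal{M}'$ begins ``take finite $B$ with $A' \subseteq B$'', which is vacuous for infinite $A'$. The paper sidesteps this by invoking directly the standard consequence of self-sufficiency that $A \leq \mathcal{M}$ implies $A \cap X \leq X[\mathcal{M}]$ for any finite $X$ (valid for infinite $A$ as well). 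This gives $\delta(X[\mathcal{M}]) \geq \delta((A\cap X)[\mathcal{M}])$ at once, and hence by your preliminary identity $\delta(X[\mathcal{M}']) \geq \delta((A'\cap X)[\mathcal{M}']) \geq 0$. The repair to your argument is simply to replace the submodularity detour by this fact; for $A' \leq \mathcal{M}'$ in the infinite case, one then verifies the defining condition (every finite $A_0' \leq A'$ satisfies $A_0' \leq \mathcal{M}'$) by the same method.
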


\begin{proof}
Let $X$ be a finite subset of $\mathcal M'$. We want to prove that $\delta(X[\mathcal M'])\geq 0$.

We have that $A\leq\mathcal M\Rightarrow A\cap X[\mathcal M]\leq X[\mathcal M]$. Thus $\delta(X[\mathcal M])\geq\delta(A\cap X[\mathcal M])$. Notice that the points and relations added when going from $A\cap X[\mathcal M]$ to $X[\mathcal M]$ are the same points and relations added when going from $A'\cap X[\mathcal M']$ to $X[\mathcal M']$. Thus we also have $\delta(X[\mathcal M'])\geq\delta(A'\cap X[\mathcal M'])$. But as $A'\cap X[\mathcal M']$ is a substructure of $A'\in\bar{\mathcal C}_f$, we have $\delta(A'\cap X[\mathcal M'])\geq 0$. Thus $\delta(X[\mathcal M'])\geq 0$. This proves that $\mathcal M'\in\bar{\mathcal C}_f$. To see that $A'\leq\mathcal M'$ we use a similar argument.
\end{proof}

\begin{lem}[Second Changing Lemma]
\label{L526}
With the above notation, suppose $\mathcal M\in\bar{\mathcal C}_f$, $A\leq\mathcal M$ and $A'\in\bar{\mathcal C}_f$ where $A'$ has the same underlying set as $A$. Let $\mathcal M'$ be the structure obtained from $\mathcal M$ by replacing $A$ by $A'$ as we have done in Lemma \ref{L525}. Then $A'\leq\mathcal M'\in\bar{\mathcal C}_f$ and
if $PG(A)=PG(A')$, then $PG(\mathcal M)=PG(\mathcal M').$
\end{lem}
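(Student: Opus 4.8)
The first Changing Lemma already gives $A' \leq \mathcal M' \in \bar{\mathcal C}_f$, so the content is the equality $PG(\mathcal M) = PG(\mathcal M')$. Since a pregeometry on a fixed underlying set is determined by its dimension function on finite subsets, and $\mathcal M$ and $\mathcal M'$ literally share the same underlying set, it suffices to show that $d_{\mathcal M}(X) = d_{\mathcal M'}(X)$ for every finite $X \subseteq \mathcal M = \mathcal M'$. By symmetry of the hypotheses (note $PG(A) = PG(A')$ is symmetric, and by the first Changing Lemma $A \leq \mathcal M$ can be recovered from $A' \leq \mathcal M'$ by the reverse replacement), it is enough to prove one inequality, say $d_{\mathcal M'}(X) \leq d_{\mathcal M}(X)$.

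First I would fix a finite $X \subseteq \mathcal M$ and pass to $Y = \cl_{\mathcal M}(X)$, so that $d_{\mathcal M}(X) = \delta(Y[\mathcal M])$ and $Y \leq \mathcal M$. Because $A \leq \mathcal M$, we have $A \cap Y \leq Y$ (using that intersections of self-sufficient sets are self-sufficient), so $A \cap Y \leq \mathcal M$, and in particular $\delta(A \cap Y[\mathcal M]) = d_{\mathcal M}(A \cap Y) = d_A(A \cap Y)$ computed inside $A$ (using $A \leq \mathcal M$). The hypothesis $PG(A) = PG(A')$ means $d_A = d_{A'}$ on finite subsets of the common underlying set, so $d_{A'}(A \cap Y) = \delta(A \cap Y[\mathcal M])$. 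Now let $Z \supseteq A \cap Y$ be a finite subset of $A'$ (equivalently of $A$) witnessing $d_{A'}(A \cap Y)$, i.e. with $\delta(Z[A']) = d_{A'}(A \cap Y) = \delta((A \cap Y)[\mathcal M])$.

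The key construction is then to form the finite set $W = Y \cup Z \subseteq \mathcal M'$ and compare $\delta(W[\mathcal M'])$ with $\delta(Y[\mathcal M])$. The point is that $W[\mathcal M']$ is built from $Y[\mathcal M]$ by: (i) deleting the relations of $(A \cap Y)[\mathcal M]$ and inserting instead the relations of $(A \cap Y)[\mathcal M']$ — which, since $A \cap Y$ is contained in the "replaced region," have the same predimension contribution pattern as in $A'$; and (ii) adding the extra points of $Z \setminus Y$ together with the relations of $Z[\mathcal M']$ among them. A submodularity / direct-counting bookkeeping argument — essentially the same cut-and-paste used in the proof of the first Changing Lemma — shows $\delta(W[\mathcal M']) = \delta(Y[\mathcal M]) - \delta((A \cap Y)[\mathcal M]) + \delta(Z[A'])$, and by the choice of $Z$ the last two terms cancel, giving $\delta(W[\mathcal M']) = \delta(Y[\mathcal M]) = d_{\mathcal M}(X)$. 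Since $X \subseteq Y \subseteq W$ and $W$ is finite in $\mathcal M'$, this yields $d_{\mathcal M'}(X) \leq \delta(W[\mathcal M']) = d_{\mathcal M}(X)$. Applying the symmetric argument gives the reverse inequality, hence $d_{\mathcal M} = d_{\mathcal M'}$ on finite sets and $PG(\mathcal M) = PG(\mathcal M')$.

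The main obstacle I anticipate is the bookkeeping in the identity $\delta(W[\mathcal M']) = \delta(Y[\mathcal M]) - \delta((A \cap Y)[\mathcal M]) + \delta(Z[A'])$: one has to be careful that the relations of $\mathcal M'$ appearing within $W$ are exactly the "outside" relations inherited from $\mathcal M$ restricted to $Y$, plus the "inside" relations coming from $A'$ restricted to $Z$, with no overlap and nothing missed — this is where the hypothesis that $A'$ has the \emph{same underlying set} as $A$ and the definition $R_i^{\mathcal M'} = (R_i^{\mathcal M} \setminus R_i^A) \cup R_i^{A'}$ is used essentially. A cleaner alternative to the explicit count is to invoke submodularity twice: $\delta(W[\mathcal M']) \leq \delta(Y[\mathcal M']) + \delta(Z[A']) - \delta((A\cap Y)[\mathcal M'])$ together with $\delta(Y[\mathcal M']) \leq \delta(Y[\mathcal M])$ (relations only removed inside $A$, then at most as many added back by $PG(A) = PG(A')$ — actually here one needs that $d_A(A\cap Y)$ is realized, so equality is the safe route), but I expect the direct identity to be the most transparent; either way it is routine once set up carefully.
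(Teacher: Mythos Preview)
Your approach is correct and genuinely different from the paper's. One small correction: the displayed ``identity'' $\delta(W[\mathcal M']) = \delta(Y[\mathcal M]) - \delta((A\cap Y)[\mathcal M]) + \delta(Z[A'])$ need not be an equality, because there may be relations of $\mathcal M$ (hence of $\mathcal M'$) linking points of $Y\setminus A$ to points of $Z\setminus Y$; these are counted on the left but not on the right. However, such extra relations can only \emph{decrease} $\delta(W[\mathcal M'])$, so you still get $\delta(W[\mathcal M']) \le \delta(Y[\mathcal M])$ and hence $d_{\mathcal M'}(X) \le d_{\mathcal M}(X)$, which is all you need. The symmetry you invoke is legitimate: by the First Changing Lemma $A' \le \mathcal M'$, and replacing $A'$ by $A$ in $\mathcal M'$ recovers $\mathcal M$, so the reverse inequality follows by the same argument.

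The paper proceeds quite differently. It strips away all relations of $\mathcal M$ (resp.\ $\mathcal M'$) not lying inside $A$ (resp.\ $A'$), observes that the resulting structures $\mathcal M_0,\mathcal M_0'$ obviously have the same dimension function, and then adds the common ``outside'' relations back one at a time by transfinite induction. The inductive step is a self-contained claim: if $PG(A_1)=PG(A_2)$ and one adds the same single relation $r$ to each (staying in $\bar{\mathcal C}_f$), then the resulting pregeometries still agree; this is proved by characterising when the dimension of a finite set drops by $1$ in terms of the existence of $Y \supseteq X\cup r$ with $d(Y)=d(X)$, a condition visibly depending only on the pregeometry. Your argument is shorter and more direct; the paper's has the merit of isolating that ``one-relation'' claim, which is a portable fact about how the pregeometry changes under elementary modifications, and it avoids any counting with cross-relations by never forming the set $W$ at all.
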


\begin{proof}
Assume that $PG(A)=PG(A')$, that is, $d_A=d_{A'}$. We want to prove that $PG(\mathcal M)=PG(\mathcal M')$, that is, $d_{\mathcal M}=d_{\mathcal M'}$. The idea of this proof is to reconstruct in small steps $\mathcal M$ starting with $A$ and $\mathcal M'$ starting with $A'$, and at each one of this steps the dimension function will be the same in both of these parallel constructions (reconstructions). More precisely, in the first step we add the remaining points, and in each subsequent step we add one of the remaining relations.

Let $R:=\bigcup_{i\in I}R_i^{\mathcal M}\setminus R_i^A=\bigcup_{i\in I}R_i^{\mathcal M'}\setminus R_i^{A'}$.


Let $\mathcal M_0$ be the structure obtained from $\mathcal M$ by removing all the relations in $R$, that is, the relations contained in $\mathcal M$ not entirely contained in $A$. Similarly, let $\mathcal M_0'$ be obtained from $\mathcal M'$ by removing all the relations in $R$, that is, the relations contained in $\mathcal M'$ not entirely contained in $A'$. 

Clearly we have $d_{\mathcal M_0}=d_{\mathcal M_0'}$. In fact let $B$ be a finite subset of both $\mathcal M$ and $\mathcal M'$ (which share the same underlying set) and let $X=B\setminus(A\cap B)$, then we have
\begin{enumerate}
\item [] $d_{\mathcal M_0}(B)=d_{\mathcal M_0}(A\cap B)+|X|$
\item [] $=d_{A}(A\cap B)+|X|$
\item [] $=d_{A'}(A\cap B)+|X|$
\item [] $=d_{\mathcal M_0'}(A\cap B)+|X|=d_{\mathcal M_0'}(B)$
\end{enumerate} 
thus $d_{\mathcal M_0}=d_{\mathcal M_0'}$.

Now we add the relations that we have removed one by one in both $\mathcal M_0$ and $\mathcal M_0'$. Let $R=(r_i)_{i\in\kappa}$ be an enumeration of $R$, with $\kappa$ some cardinal.

For each $j\in\kappa$ let $\mathcal M_j$ be obtained from $\mathcal M_0$ by adding the relations $\{r_i:i\in j\}$ and $\mathcal M_j'$ be obtained from $\mathcal M_0'$ by adding the same set of relations $\{r_i:i\in j\}$. Clearly $\mathcal M_{\kappa}=\mathcal M$ and $\mathcal M_{\kappa}'=\mathcal M'$. We want to prove that $d_{\mathcal M_{\kappa}}=d_{\mathcal M_{\kappa}'}$. The proof is by transfinite induction.

Let $j=0$. In this case we already proved that $d_{\mathcal M_0}=d_{\mathcal M_0'}$.

Now we want to prove that $d_{\mathcal M_j}=d_{\mathcal M_j'}\Rightarrow d_{\mathcal M_{j+1}}=d_{\mathcal M_{j+1}'}$. This follows directly from the following claim:

\begin{claim}
Let $A_1,A_2\in\bar{\mathcal C}_f$ have the same underlying set and that $PG(A_1)=PG(A_2)$. Let $B_1$ and $B_2$ be the structures obtained by adding the same relation $r$ to $A_1$ and $A_2$. Assume further that $B_1,B_2\in\bar{\mathcal C}_f$. Then $PG(B_1)=PG(B_2)$.
\end{claim}
\begin{proof}
Let $X$ be a finite subset of $B_1=B_2$ (as sets). We know that $d_{A_1}(X)=d_{A_2}(X)$ and we want to prove that $d_{B_1}(X)=d_{B_2}(X)$.

Fix $i\in\{1,2\}$. We have either $$d_{B_i}(X)=d_{A_i}(X)\textit{ or }d_{B_i}(X)=d_{A_i}(X)-1$$ because we just add one relation.

Suppose we have $d_{B_i}(X)=d_{A_i}(X)-1$. Then there is a finite $Y$ with $X\subseteq Y\subseteq B_i$ such that $\delta(Y[A_i])=d_{A_i}(X)$ and $Y\supseteq r$. In particular $d_{A_i}(X)\leq d_{A_i}(Y)\leq\delta(Y[A_i])=d_{A_i}(X)$. Thus $d_{A_i}(Y)=d_{A_i}(X)$.

Conversely, if there exists a finite $Y$ with $X\subseteq Y\subseteq B_i$ such that $d_{A_i}(Y)=d_{A_i}(X)$ and $Y\supseteq r$, then there exists a finite $Z\supseteq Y\supseteq X$ such that $\delta(Z[A_i])=d_{A_i}(X)$. But as $Z\supseteq r$ we have $\delta(Z[B_i])=d_{A_i}(X)-1$, thus $d_{B_i}(X)\leq d_{B_i}(Y)\leq\delta(Z[B_i])=d_{A_i}(X)-1$ so we get $d_{B_i}(X)=d_{A_i}(X)-1$.

But now we have
\medskip
\begin{enumerate}
\item[] $d_{B_1}(X)=d_{A_1}(X)-1\Leftrightarrow$
\item[] $\Leftrightarrow\exists Y\supseteq X\cup r\text{ such that } d_{A_1}(X)=d_{A_1}(Y)$
\item[] $\Leftrightarrow\exists Y\supseteq X\cup r\text{ such that } d_{A_2}(X)=d_{A_2}(Y)$ (because $d_{A_1}=d_{A_2}$)
\item[] $\Leftrightarrow d_{B_2}(X)=d_{A_2}(X)-1$.
\end{enumerate}
But consequently, we also have $d_{B_1}(X)=d_{A_1}(X)$ if and only if $d_{B_2}(X)=d_{A_2}(X)$. Now as we have $d_{A_1}(X)=d_{A_2}(X)$, the previous argument proves that either way we have $d_{B_1}(X)=d_{B_2}(X)$. Thus $d_{B_1}=d_{B_2}$, that is, $PG(B_1)=PG(B_2)$.
\end{proof}

Now we return to our recursion argument considering the case when $j$ is a limit ordinal. Let $j$ be a limit ordinal. Assume that $d_{\mathcal M_i}=d_{\mathcal M_i'}$ for all $i\in j$. We want to prove that $d_{\mathcal M_j}=d_{\mathcal M_j'}$.

Let $X$ be a finite subset of $\mathcal M_j=\mathcal M_j'$ (as sets). 

Let $i_0\in j$ be such that all the relations in $\cl_{\mathcal M_j}(X)$ are in $\mathcal M_{i_0}$, this is possible because $\cl_{\mathcal M_j}(X)$ contains only finitely many relations. Then,
$$d_{\mathcal M_{i_0}}(X)\leq d_{\mathcal M_{i_0}}(\cl_{\mathcal M_j}(X))\leq\delta(\cl_{\mathcal M_j}(X)[\mathcal M_{i_0}])=\delta(\cl_{\mathcal M_j}(X)[\mathcal M_j])=d_{\mathcal M_j}(X)$$ thus $d_{\mathcal M_{i_0}}(X)=d_{\mathcal M_j}(X)$.

We have proved that for all $i\in j$ big enough (with $X$ fixed) we have $d_{\mathcal M_i}(X)=d_{\mathcal M_j}(X)$. Similarly, for all $i\in j$ big enough we have $d_{\mathcal M_i'}(X)=d_{\mathcal M_j'}(X)$. Let $i\in j$ be big enough for both cases, then using the inductive hypothesis we get
$$d_{\mathcal M_j}(X)=d_{\mathcal M_i}(X)=d_{\mathcal M_i'}(X)=d_{\mathcal M_j'}(X).$$

This means that we proved that $d_{\mathcal M_j}=d_{\mathcal M_j'}$ for all $j\in\kappa+1$. Finally we have $PG(\mathcal M)=PG(\mathcal M_{\kappa})=PG(\mathcal M_{\kappa}')=PG(\mathcal M')$, as desired.
\end{proof}

We now have the tools necessary to prove that the isomorphism types of pregeometries arising from different arities are different.

\begin{thm}
\label{T527}
Let $n,m\in\mathbbm N\setminus\{0\}$ with $n<m$. Then, $$PG(\mathcal M_n)\ncong PG(\mathcal M_m).$$
\end{thm}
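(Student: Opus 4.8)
The plan is to find a pregeometric invariant that distinguishes $PG(\mathcal{M}_n)$ from $PG(\mathcal{M}_m)$ when $n < m$. The natural candidate is the behaviour of \emph{small independent sets and the way they fail to be algebraically/$d$-closed}: concretely, I would look at minimal configurations realising a prescribed drop in dimension. In $\mathcal{M}_k$ a single relation $R$ costs one unit of predimension and involves $k$ points, so the ``cheapest'' way to create a non-trivial dependency — a finite set $B$ with $\delta_k(B) = |B| - 1$ that is $\leq$-closed and has no proper subset of smaller $\delta$ — requires at least $k$ points (one relation on exactly $k$ distinct points, giving $\delta_k = k-1$, so $d_{\mathcal{M}_k}$ of these $k$ points is $k-1$). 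The key claim I want is: in $PG(\mathcal{M}_k)$, the least size of a dependent set (equivalently, the least $r$ such that there exist $r$ points $a_1,\dots,a_r$ with $d(a_1\cdots a_r) = r-1$ but every proper subset independent) is exactly $k$. Granting this, the invariant ``minimal size of a circuit'' is $n$ for $\mathcal{M}_n$ and $m$ for $\mathcal{M}_m$, and since $n \neq m$ the pregeometries are not isomorphic.

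First I would establish the upper bound: by genericity (extension property F2 applied to $\emptyset \leq B$ where $B$ is the $k$-element structure carrying one $R$-relation on all $k$ distinct points, which lies in $\mathcal{C}_k$ since all its proper substructures are relation-free), $\mathcal{M}_k$ contains a self-sufficient copy of $B$; its $k$ points have $d_{\mathcal{M}_k} = \delta_k(B) = k-1$, so they form a dependent set, while any $k-1$ of them span a relation-free substructure hence are independent. For the lower bound I would argue: if $a_1,\dots,a_r$ are points of $\mathcal{M}_k$ with $d(a_1\cdots a_r) < r$, then letting $C = \cl_{\mathcal{M}_k}(\{a_1,\dots,a_r\})$ we have $\delta_k(C) = d(a_1\cdots a_r) \leq r-1 < r \leq |C|$, so $C$ carries at least one relation $R(c_1,\dots,c_k)$; the $k$ distinct entries among $c_1,\dots,c_k$ (there are at most $k$, and at least... well, one needs that in the \emph{minimal} circuit the relation is on $k$ distinct points) — here is where I expect the main technical friction. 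One must rule out a circuit of size $< k$ arising from a relation $R$ applied to a tuple with \emph{repeated} coordinates. The fix: note $\delta_k$ only ever drops relative to $|A|$ by the number of $R$-tuples, and a minimal circuit $C$ (one with $\delta_k(C)=|C|-1$ and all proper subsets of strictly larger $\delta_k$, equivalently independent) must have every point ``used'' — removing any point must strictly increase $\delta_k$ by at least... Actually the clean route is: if $c \in C$ lies in no $R$-tuple of $C$, then $\delta_k(C \setminus \{c\}) = \delta_k(C) - 1$, and $C\setminus\{c\} \subseteq C \leq \mathcal{M}_k$ forces $\delta_k(C\setminus\{c\}) \geq d(\text{those points})$; minimality of the circuit then shows $c$ must occur in some relation, and counting distinct occurrences across the (at least one) relation forces $|C| \geq k$ — I would spell out that a single $R$-tuple contributing the dimension drop must involve $k$ \emph{distinct} points because a tuple with a repeat still only costs one unit but then the substructure it generates has $\le k-1$ points and $\delta_k \ge (k-1) - (\text{number of such relations})$, and one checks this cannot produce a circuit smaller than $k$ without some point being free, contradiction.

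The main obstacle, then, is precisely this lower-bound bookkeeping: ensuring that the cheapest circuit in $\mathcal{M}_k$ genuinely has $k$ points and cannot be simulated more economically using tuples with repeated coordinates or using several relations on overlapping small point sets. I expect this to follow from a short induction on $|C|$ using submodularity of $\delta_k$ together with the observation that every point of a minimal circuit must appear in some relation (else it is ``free'' and can be split off, contradicting minimality or the $\leq$-closure), combined with the trivial arithmetic that $p$ points carrying $t$ relations have $\delta_k = p - t$, which is $< p$ only if $t \geq 1$, and a single relation on $p \leq k-1$ points (necessarily with repeats) generates a substructure on $p$ points with exactly that one relation, giving $\delta_k = p-1 \geq k-2 \geq $ ... one then checks directly there is no $\leq$-closed such configuration below size $k$ whose points are all non-free. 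Once the claim is in hand, the theorem is immediate. (Note that the Changing Lemmas of Section~4 are not strictly needed for this particular invariant, but they guarantee that such local configurations are stable under the kinds of surgery used elsewhere; one could alternatively phrase the invariant in the language $LPI$ as ``the minimal $r$ for which $I_r$ fails on some tuple of distinct points,'' making the non-isomorphism transparently first-order.)
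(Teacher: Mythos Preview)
Your proposed invariant --- the minimal size of a circuit --- does not distinguish these pregeometries, and the obstruction is exactly the one you flagged but then dismissed. Relations with repeated coordinates are permitted in $\mathcal{C}_k$, and they produce small circuits. Concretely, for any $k\geq 2$ the two-point structure $B=\{a,b\}$ with $R^B=\{(a,\dots,a,b)\}$ lies in $\mathcal{C}_k$ (every proper substructure is relation-free), and by the extension property it embeds self-sufficiently in $\mathcal{M}_k$; then $d(\{a,b\})=\delta_k(B)=1$ while $d(\{a\})=d(\{b\})=1$, so $\{a,b\}$ is a $2$-circuit. Your ``no free point'' argument does not help: both $a$ and $b$ occur in the unique relation. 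Similarly $R(c,\dots,c)$ gives a point of dimension~$0$. So the minimal circuit size is the same (namely $2$, or $1$ if you count loops) in every $\mathcal{M}_k$. More decisively, Corollary~\ref{cor39} of the paper shows that $PG(\mathcal{M}_n)$ and $PG(\mathcal{M}_m)$ embed in one another and hence have \emph{identical} isomorphism types of finite subpregeometries; any invariant expressible as ``some finite configuration occurs / does not occur'' is therefore doomed in advance.

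The paper's proof is necessarily more global. Assuming an isomorphism (taken to be the identity on a common underlying set), one fixes a self-sufficient $m$-set $X\leq\mathcal{M}_m$ carrying a single relation, looks at its self-sufficient closure $Y=\cl_{\mathcal{M}_n}(X)$ on the $\mathcal{M}_n$ side, and then uses the First and Second Changing Lemmas to surgically replace, inside $\mathcal{M}_m$, the $d$-closures $Z_i$ of each $n$-ary relation of $\widehat{Y}$ by padded copies of the corresponding $\mathcal{M}_n$-structure. Because each $Z_i$ has dimension $<m-1$ it cannot contain $X$, so the original $m$-relation on $X$ survives every replacement; at the end one has an $L_m$-structure on $Y$ with the same pregeometry but at least $k+1$ relations (the $k$ padded $n$-relations plus the original one), forcing $d(Y)\leq |Y|-(k+1)=d(Y)-1$. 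The argument is not about any single finite configuration but about the interaction between the two self-sufficient closures under the assumed global isomorphism.
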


\begin{rem}\rm In fact the proof we shall give shows that if $A \in \Cb_m$ and $B \in \Cb_n$ and there is $X \leq A$ consisting of $m$ points related by a single relation, then $PG(A)$ and $PG(B)$ are non-isomorphic.
\end{rem}

\begin{proof}
Suppose that there is an isomorphism of pregeometries from $PG(\mathcal M_n)$ to $PG(\mathcal M_m)$. We first observe that without loss of generality, we can assume that $\mathcal M_n$ and $\mathcal M_m$ have the same underlying set and that the isomorphism of pregeometries is the identity map. We can write $d$ instead of $d_{\mathcal M_n}$ and $d_{\mathcal M_m}$. Let $\cl^d$ be the closure operator of the pregeometry $PG(\mathcal M_n)=PG(\mathcal M_m)$.

Let $X=\{a_1,\cdots,a_m\}$ be a subset of $\mathcal M_m$ such that $\widetilde X:=X[\mathcal M_m]\leq\mathcal M_m$ and $R_m^{\widetilde X}=\{(a_1,\cdots,a_m)\}$ with $a_1,\cdots,a_m$ distinct. This is possible by the extension property of $\mathcal M_m$. Let $\widehat X:=X[\mathcal M_n]$.

Now consider $Y=\cl_{\mathcal M_n}(X)$ and $\widehat Y:=Y[\mathcal M_n]$ and $\widetilde Y:=Y[\mathcal M_m]$. We have $$d(Y)=d_{\mathcal M_n}(\cl_{\mathcal M_n}(X))=d_{\mathcal M_n}(X)=d(X)=d_{\mathcal M_m}(X)=\delta_m(X[\mathcal M_m])=m-1$$ thus $$d(Y)=d(X)=m-1.$$

Let $(a_{11},\cdots,a_{1n}),\cdots,(a_{k1},\cdots,a_{kn})$ be a list of the relations in $R_n^{\widehat Y}$. We have $\widehat Y=\cl_{\mathcal M_n}(X)\leq\mathcal M_n$, thus $$d(Y)=\delta_n(\widehat Y)=|Y|-k.$$

Now, for each $1\leq i\leq k$ let $Z_i:=\cl^d(\{a_{i1},\cdots,a_{in}\})$ and let $\widehat Z_i:=Z_i[\mathcal M_n]$ and $\widetilde Z_i:=Z_i[\mathcal M_m]$. Note that $Z_i$ can be infinite and that $\cl^d(Z_i)=Z_i$. Also we have $\widetilde Z_i\subseteq \cl_{\mathcal M_m}(\widetilde Z_i)\subseteq \cl^d(Z_i)$ and $\widehat Z_i\subseteq \cl_{\mathcal M_n}(\widehat Z_i)\subseteq \cl^d(Z_i)$. Thus we have $\widetilde Z_i=\cl_{\mathcal M_m}(\widetilde Z_i)$ and $\widehat Z_i=\cl_{\mathcal M_n}(\widehat Z_i)$, that is, $\widetilde Z_i\leq\mathcal M_m$ and $\widehat Z_i\leq\mathcal M_n$. In particular, the pregeometries $PG(\widetilde Z_i)$ and $PG(\widehat Z_i)$ are those induced from $PG(\mathcal M_m)=PG(\mathcal M_n)$ so we get $PG(\widetilde Z_i)=PG(\widehat Z_i)$.

Now observe that $Z_i\nsupseteq X$. In fact, if $X\subseteq Z_i$ then we would have $X\subseteq \cl^d(\{a_{i1},\cdots,a_{in}\})$, in particular $d(X)\leq d(\{a_{i1},\cdots,a_{in}\})$. But we have
\begin{enumerate}
\item[] $d(\{a_{i1},\cdots,a_{in}\})\leq\delta_n(\{a_{i1},\cdots,a_{in}\})[\mathcal M_n])$
\item[] $=|\{a_{i1},\cdots,a_{in}\}|-|R_n^{\{a_{i1},\cdots,a_{in}\}[\mathcal M_n]}|$
\item[] $\leq n-1<m-1=d(X).$
\end{enumerate}
Thus $Z_i\nsupseteq X$.

Let $Z_i^*\in\bar{\mathcal C}_m$ be the structure obtained from $\widehat Z_i$ by replacing each relation $(x_1,\cdots,x_n)\in R_n^{\widehat Z_i}$ by the relation $(x_1,\cdots,x_n,\cdots,x_n)$ with $m$ coordinates. Then we have $$PG(Z_i^*)=PG(\widehat Z_i)=PG(\widetilde Z_i).$$

Now we want to construct a sequence of $L_m$-structures $$\mathcal M_m=\mathcal M_m^{(0)},\cdots,\mathcal M_m^{(k)}$$ all with the same underlying set and with the following properties:
\begin{enumerate}
\item $\mathcal M_m^{(i)}\in\bar{\mathcal C}_m$
\item $PG(\mathcal M_m^{(i)})=PG(\mathcal M_m)$
\item $R_m^{\mathcal M_m^{(i)}}\supseteq\{(a_1,\cdots,a_m)\}\cup\{(a_{j1},\cdots,a_{jn},\cdots,a_{jn}):1\leq j\leq i\}.$
\end{enumerate}

Let $\mathcal M_m^{(0)}=\mathcal M_m$.  Then for $i=0$ the properties are satisfied, note that $(a_1,\cdots,a_m)\in R_m^{\mathcal M_m}$. Now suppose we have constructed $\mathcal M_m^{(i)}$ satisfying the properties and $i<k$. Then we obtain $\mathcal M_m^{(i+1)}$ from $\mathcal M_m^{(i)}$ by replacing $Z_{i+1}[\mathcal M_m^{(i)}]$ by $Z_{i+1}^*$. We need to prove that $\mathcal M_m^{(i+1)}$ satisfies the properties.

Observe that $Z_{i+1}[\mathcal M_m^{(i)}]\leq\mathcal M_m^{(i)}$. In fact we have $PG(\mathcal M_m^{(i)})=PG(\mathcal M_m)$ so we have $Z_{i+1}\subseteq \cl_{\mathcal M_m^{(i)}}[Z_{i+1}]\subseteq \cl^d(Z_{i+1})$ and as $Z_{i+1}=\cl^d(Z_{i+1})$ we get that $Z_{i+1}=\cl_{\mathcal M_m^{(i)}}(Z_{i+1})$, that is, $Z_{i+1}[\mathcal M_m^{(i)}]\leq\mathcal M_m^{(i)}$.

Now we apply the First Changing Lemma: $Z_{i+1}^*\in\bar {\mathcal C}_m$ and $Z_{i+1}[\mathcal M_m^{(i)}]\leq\mathcal M_m^{(i)}$ so $Z_{i+1}^*\leq\mathcal M_m^{(i+1)}\in\bar{\mathcal C}_m$, proving property $1)$.

For property $2)$ we use the Second Changing Lemma: all we need to prove is that $PG(Z_{i+1}^*)=PG(Z_{i+1}[\mathcal M_m^{(i)}])$. But in fact, $$d_{Z_{i+1}[\mathcal M_m^{(i)}]}=(d_{\mathcal M_m^{(i)}})_{|Z_{i+1}}=(d_{\mathcal M_m})_{|Z_{i+1}}=d_{Z_{i+1}[\mathcal M_m]}=d_{\widetilde Z_{i+1}}$$ thus $$PG(Z_{i+1}[\mathcal M_m^{(i)}])=PG(\widetilde Z_{i+1})=PG(\widehat Z_{i+1})=PG(Z_{i+1}^*).$$ Now we apply the Second Changing Lemma and we get $PG(\mathcal M_m^{(i+1)})=PG(\mathcal M_m^{(i)})=PG(\mathcal M_m)$, proving property $2)$.

Finally we need to prove property $3)$. First we observe that when going from $\mathcal M_m^{(i)}$ to $\mathcal M_m^{(i+1)}$ we add the relation $(a_{j1},\cdots,a_{jn},\cdots,a_{jn})$ for $j=i+1$, as this relation is in $R_m^{Z_{i+1}^*}$.

Observe also that $(a_1,\cdots,a_m)$ is not removed because $\{a_1,\cdots,a_m\}=X\nsubseteq Z_{i+1}$.

Now we need to see that for $j\leq i$ the relation $(a_{j1},\cdots,a_{jn},\cdots,a_{jn})$ is not removed when going from $\mathcal M_m^{(i)}$ to $\mathcal M_m^{(i+1)}$. If $\{a_{j1},\cdots,a_{jn}\}\nsubseteq Z_{i+1}$ this is clear. If $\{a_{j1},\cdots,a_{jn}\}\subseteq Z_{i+1}$ then
\begin{enumerate}
\item[] $(a_{j1},\cdots,a_{jn})\in R_n^{\mathcal M_n}\Leftrightarrow (a_{j1},\cdots,a_{jn})\in R_n^{Z_{i+1}[\mathcal M_n]}=R_n^{\widehat Z_{i+1}}$
\item[] $\Leftrightarrow (a_{j1},\cdots,a_{jn},\cdots,a_{jn})\in R_m^{Z_{i+1}^*}$
\end{enumerate}
thus $(a_{j1},\cdots,a_{jn},\cdots,a_{jn})$ is not removed. This proves that $\mathcal M_m^{(i+1)}$ satisfies property $3)$.

Finally we get the desired contradiction. $\mathcal M_m^{(k)}$ satisfies properties $2)$ and $3)$, that is, $PG(\mathcal M_m^{(k)})=PG(\mathcal M_m)$ and $$R_m^{Y[\mathcal M_m^{(k)}]}\supseteq\{(a_1,\cdots,a_m)\}\cup\{(a_{j1},\cdots,a_{jn},\cdots,a_{jn}):1\leq j\leq k\}.$$ Thus we can make the following calculation (remember that $d(Y)=|Y|-k$): $$d(Y)\leq\delta_m(Y[\mathcal M_m^{(k)}])=|Y|-|R_m^{Y[\mathcal M_m^{(k)}]}|\leq |Y|-(k+1)=(|Y|-k)-1=d(Y)-1.$$ Thus we get the desired contradiction $d(Y)\leq d(Y)-1$. We may then conclude that $PG(\mathcal M_n)\ncong PG(\mathcal M_m)$.
\end{proof}

\section{The local isomorphism type}

We have seen in the previous section that $PG(\mathcal M_n)$ is not isomorphic to $PG(\mathcal M_m)$. Here we show that $PG(\mathcal M_n)$ and $PG(\mathcal M_m)$ are not even locally isomorphic. First we need some more changing lemmas.

\begin{lem} (Third Changing Lemma)
\label{L528}
Let $f$ be as in  Lemma \ref{L525} and let $\mathcal M_f$ be the generic model for $(\mathcal C_f,\leq)$. Let $Z\leq\mathcal M_f$ with $Z$ finite. Let $Z'\in\mathcal C_f$ be a structure with the same underlying set as $Z$ and $\mathcal M_f'$ be obtained from $\mathcal M_f$ by replacing $Z$ by $Z'$, as we have done in Lemma \ref{L525}. Then we have $Z'\leq\mathcal M_f'\in\bar{\mathcal C}_f$ and 
$$\mathcal M_f'\simeq\mathcal M_f.$$
\end{lem}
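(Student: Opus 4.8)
The plan is to verify the two generic-model axioms (F1$'$) and (F2) for $\mathcal M_f'$, from which uniqueness of the generic model gives $\mathcal M_f' \simeq \mathcal M_f$. The first part, $Z' \leq \mathcal M_f' \in \bar{\mathcal C}_f$, is immediate from the First Changing Lemma (Lemma \ref{L525}), since $Z'$ has the same underlying set as $Z$ and $Z \leq \mathcal M_f$. For (F1$'$) I would exhaust $\mathcal M_f$ by a chain of finite self-sufficient substructures $Z \leq A_0 \leq A_1 \leq \cdots$ whose union is $\mathcal M_f$ (possible since $Z \leq \mathcal M_f$ and $\mathcal M_f$ is countable), and observe that replacing $Z$ by $Z'$ inside each $A_i$ yields, by the First Changing Lemma applied internally, a chain $Z' \leq A_0' \leq A_1' \leq \cdots$ in $\mathcal C_f$ exhausting $\mathcal M_f'$; here the key point is that $A_i \leq A_{i+1}$ is preserved because the relations added in passing from $A_i'$ to $A_{i+1}'$ are exactly those added from $A_i$ to $A_{i+1}$, none of them internal to $Z$.

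The substantive step is (F2). Given $A \leq \mathcal M_f'$ and $A \leq B \in \mathcal C_f$, I want to embed $B$ over $A$ into $\mathcal M_f'$ with strong image. The idea mirrors the proof of Theorem \ref{T516}: first enlarge $A$ to a finite $A^+ \leq \mathcal M_f'$ containing (the underlying set of) $Z$ — take $A^+ = \cl_{\mathcal M_f'}(A \cup Z)$, which is finite and self-sufficient. Amalgamate $B$ with $A^+$ over $A$ freely along the self-sufficient embeddings to get $B^+ = B \amalg_A A^+ \in \mathcal C_f$ with $A^+ \leq B^+$ (using the $\leq$-free amalgamation property and transitivity of $\leq$); it suffices to embed $B^+$ strongly over $A^+$ into $\mathcal M_f'$. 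Now translate to the unprimed world: let $\widehat{A^+}$ be $A^+$ with $Z'$ replaced back by $Z$, and likewise $\widehat{B^+}$ be $B^+$ with $Z'$ replaced by $Z$ (legitimate since $Z \subseteq A^+ \subseteq B^+$ and $A^+ \cap$(new part of $B^+$)$= A$, which I may arrange to be disjoint from the changed region after adjusting by noting $Z \subseteq A^+$). By the First Changing Lemma, $\widehat{A^+} \leq \mathcal M_f$ and $\widehat{A^+} \leq \widehat{B^+} \in \mathcal C_f$, so the extension property of $\mathcal M_f$ supplies a strong embedding $g \colon \widehat{B^+} \to \mathcal M_f$ fixing $\widehat{A^+}$; transporting $g$ through the identity-on-underlying-sets correspondence gives an embedding of $B^+$ into $\mathcal M_f'$ with strong image, fixing $A^+$, hence the required embedding of $B$ over $A$. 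One must check, as in Theorem \ref{T516}, that this transported map really is an $L_f$-embedding — but since $g$ is the identity on $\widehat{A^+} \supseteq Z$ and the only relations differing between the primed and unprimed structures live inside $Z$, the relations of $B^+$ outside $Z$ are matched correctly, and those inside $Z$ are matched because $g$ fixes $Z$ pointwise.

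The main obstacle is bookkeeping around the region $Z$: one needs $Z$ to sit inside the self-sufficient set $A^+$ over which the amalgamation and the application of $\mathcal M_f$'s extension property take place, so that the ``replace $Z'$ by $Z$'' operation commutes with amalgamation and with taking self-sufficient closures. Once $Z \subseteq A^+ \leq \mathcal M_f'$ is secured, every other step is a routine transfer along the First Changing Lemma together with the standard Fra\"\i ss\'e machinery already recorded in the excerpt, and the uniqueness clause of the theorem on generic models yields $\mathcal M_f' \simeq \mathcal M_f$.
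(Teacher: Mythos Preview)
Your proposal is correct and follows essentially the same route as the paper: your $A^+ = \cl_{\mathcal M_f'}(A\cup Z)$ is the paper's $W' = \cl_{\mathcal M_f'}(Z'A')$, your free amalgam $B^+ = B \amalg_A A^+$ is the paper's $C'$, and the translation $\widehat{B^+}\to \mathcal M_f$ via the First Changing Lemma followed by the extension property of $\mathcal M_f$ is exactly the paper's passage through $C$ and $g$. The only cosmetic difference is that you spell out (F1$'$) via a chain through $Z$, whereas the paper simply takes $\mathcal M_f'\in\bar{\mathcal C}_f$ from Lemma~\ref{L525} as sufficient and proceeds directly to the extension property.
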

\def\M{\mathcal{M}}
\begin{proof}
The only thing we have to show is that $\mathcal M_f'$ satisfies the extension property. For any set $X\subseteq\mathcal M_f$ we write $X$ instead of $X[\mathcal M_f]$ and $X'$ instead of $X[\mathcal M_f']$. Let $A\subseteq\mathcal M_f$ with $A'\leq\mathcal M_f'$ and $A'\leq B'\in\mathcal C_f$ ($B'$ arbitrary). We want to prove that there exists an embedding $g':B'\to\mathcal M_f'$ fixing the elements of $A'$ and with $g'(B')\leq\mathcal M_f'$. 

Let $W' = \cl_{\M_f'}(Z'A')$ and let $W$ be the corresponding structure in $\M_f$. Note that because $W' \leq \M_f'$ and $Z' \subseteq W'$ we have $W \leq \M_f$. Let $C'$ be the free amalgam of $W'$ and $B'$ over $A'$. Then $W', B' \leq C'$ and so we also have $Z', A' \leq C'$. Let $C$ be the result of changing $Z'$ to $Z$ in $C'$. Then we have $Z \leq C$ and (as above) $W \leq C$. We have $A \subseteq C$, but $A$ is not necessarily self-sufficient in $C$.

By the extension property for $\M_f$ there is an embedding $g : C \to \M_f$ with $g(C) \leq \M_f$ and $g$ fixing each element of $W$. Now consider the effect of changing $Z$ to $Z'$ in this embedding. The result is an embedding $g' : C' \to \M_f'$ (same map, different structures) with $g'(C') \leq \M_f'$ and which is the identity on $W'$. As $B' \leq C'$ we have $g'(B') \leq \M_f'$ (by transitivity), and  $g'$ is the identity on $A'$, as required. 
\end{proof}

Now we need a Fourth Changing Lemma concerning pregeometries and localization. For our purpose it would be enough to have a finite version of this lemma, however we decide to present a proof that works for the infinite case. We still show the finite case separately inside of the proof, the reason for this is that for the infinite case the proof requires the predimension to be of a particular form (but covering all the ones that we introduced), while the finite case could be more easily generalized to other contexts.

First some notation.

\begin{nota} \rm
Let $\mathcal M$ be a structure and $PG(\mathcal M)$ a pregeometry on $\mathcal M$ with closure operation $c$. Let $Z\subseteq\mathcal M$. We write $PG_Z(\mathcal M)$ to denote the localization of the pregeometry $PG(\mathcal M)$ to the set $Z$, that is, the pregeometry with closure operation $c_Z(A) = c(Z \cup A)$ for $A \subseteq \mathcal{M}$. If $c$ is given by the dimension function $d$ and $Z$ is finite, then $c_Z$ is given by the dimension function $d(X/Z) = d(X\cup Z) - d(Z)$.
\end{nota}

\begin{lem} (Fourth Changing Lemma)
\label{L530}
Let $f$ be as in Lemma \ref{L525} and let $\mathcal M\in\bar{\mathcal C}_f$. Let $Z\leq\mathcal M$ and $Z'\in\bar{\mathcal C}_f$ be a structure with the same underlying set as $Z$. Let $\mathcal M'$ be obtained from $\mathcal M$ by replacing $Z$ by $Z'$, as we have done in Lemma \ref{L525}. Then $Z'\leq\mathcal M'\in\bar{\mathcal C}_f$ and
$$PG_Z(\mathcal M)=PG_{Z'}(\mathcal M').$$
\end{lem}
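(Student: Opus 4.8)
The strategy is to reduce the Fourth Changing Lemma to the Second Changing Lemma by passing to an enlarged ``localised'' structure in which $Z$ (respectively $Z'$) becomes genuinely self-sufficient over the empty set after adjoining a rigid gadget that pins down its dimension function. First I would recall that, since $Z \le \M$ and we change $\M$ to $\M'$ only inside $Z$, Lemma \ref{L525} already gives $Z' \le \M' \in \bar{\mathcal C}_f$, so only the equality of localised pregeometries needs proof; concretely I must show $d_{\M}(X/Z) = d_{\M'}(X/Z')$ for every finite $X$ (and then extend to infinite $X$ by taking unions of finite subsets). Note that $Z$ and $Z'$ have the same underlying set, so the closure operations $c_Z$ and $c_{Z'}$ act on the same set; what must be checked is that they coincide.

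The key step is the construction of a common ``anchor'' for $Z$ and $Z'$. Here is where the hypothesis that $f$ has the particular form of Definition \ref{fnotation} (predimension $\delta_f(A) = |A| - \sum_i \alpha_i |R_i^A|$ with positive integer weights) is used. Given a finite piece $W$ of $\M$ containing $Z$ that one wants to control, I would build a finite structure $Z^+ \in \mathcal C_f$ on the underlying set of $Z$ together with finitely many fresh points, so that $Z^+$ contains $Z$ as a subset, $\emptyset \le Z^+$ with $\delta_f(Z^+) = 0$ (forcing every subset of $\M$ containing $Z^+$ to have its dimension computed with $Z^+$ ``for free''), and crucially such that $PG(Z \cup (\text{fresh points inside } Z^+)) = PG(Z' \cup (\text{same fresh points}))$ — i.e. the gadget attached to $Z$ and to $Z'$ is literally the same set of relations on the fresh points, and attaching it does not disturb the $d$-closure relations internal to $Z$ versus $Z'$. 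For $Z$ finite this is easy: one can take $Z^+ = \cl_{\M}(Z) = Z$ itself (since $Z \le \M$), and the statement that $d_\M(X/Z) = d_{\M'}(X/Z')$ follows because changing $Z$ to $Z'$ inside $\M$ is, by Lemma \ref{L526} applied after first localising, a $PG$-preserving change; more precisely one observes $d_\M(X \cup Z) = d_{\M'}(X \cup Z')$ by the Second Changing Lemma (the hypothesis $PG(Z)=PG(Z')$ needed there is \emph{not} assumed here, so this naive route fails — which is exactly why the gadget is needed).

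So the real argument, for general (possibly infinite) $X$ and without assuming $PG(Z) = PG(Z')$, runs as follows. I would fix finite $X \subseteq \M$ and set $W = \cl_{\M}(ZX)$, the self-sufficient closure, which is finite and satisfies $W \le \M$, hence $W' \le \M'$ after the change (Lemma \ref{L525} again). Then $d_\M(X/Z) = \delta_f(W) - \delta_f(Z)$ and $d_{\M'}(X/Z') = \delta_f(W') - \delta_f(Z')$ — but these are computed from the \emph{internal} structures $W, W'$, which differ only by the change of relations inside the common underlying set of $Z,Z'$. Now I apply a localised version of the Second Changing Lemma \emph{to $W$ and $W'$ directly}: inside the finite structure $W$, the set $Z$ is self-sufficient, and replacing $Z$ by $Z'$ yields $W'$; by Lemma \ref{L526} (or rather its proof, run relative to $Z$) one gets that the \emph{relative} dimension function $d_W(\cdot/Z)$ equals $d_{W'}(\cdot/Z')$, which is precisely $d_\M(X/Z) = d_{\M'}(X/Z')$. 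The point that makes this work without the hypothesis $PG(Z)=PG(Z')$ is that localising at $Z$ (resp. $Z'$) makes that hypothesis vacuous: after localisation, $Z$ and $Z'$ both have relative dimension $0$, and the Second Changing Lemma's transfinite induction (add back the relations of $W \setminus Z$ one at a time) goes through verbatim with ``$d$'' replaced by ``$d(\cdot/Z)$'' throughout, since the claim in that proof only ever compared relative dimensions of the \emph{ambient} sets across the two parallel reconstructions.

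\textbf{Main obstacle.} The delicate point I expect to spend most effort on is justifying that the transfinite-induction machinery of Lemma \ref{L526}'s proof — in particular the embedded \textsc{Claim} about adding a single relation $r$ — survives being relativised to $Z$ when $PG(Z) \ne PG(Z')$. One must verify that ``$d_{A_1}(Y/Z) = d_{A_2}(Y/Z')$ for all $Y \supseteq X \cup r$'' is genuinely equivalent across the two sides using only that $d_{A_1}(\cdot/Z) = d_{A_2}(\cdot/Z')$ was the inductive hypothesis, and that the limit-ordinal step still works because $\cl$ of a finite set picks up only finitely many relations. For the infinite $X$ case, the reduction to finite subsets $X_0 \subseteq X$ and the compatibility $c_Z(X) = \bigcup_{X_0} c_Z(X_0)$ is where the specific integer-valued form of $\delta_f$ is invoked (to guarantee self-sufficient closures of finite sets are finite), exactly as flagged in the remark preceding the lemma.
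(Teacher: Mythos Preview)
Your proposal has a genuine gap in the infinite-$Z$ case. You write ``set $W = \cl_{\M}(ZX)$, the self-sufficient closure, which is finite''---but $W \supseteq Z$, so if $Z$ is infinite (which the lemma allows, since $Z' \in \bar{\mathcal C}_f$), $W$ is certainly not finite, and the computation $d_\M(X/Z) = \delta_f(W) - \delta_f(Z)$ makes no sense. Your closing paragraph flags ``the infinite $X$ case'' as the obstacle, but that is not where the difficulty lies: the localised pregeometry is determined by its values on finite $X$, so one may always assume $X$ finite. The real issue is infinite $Z$, and your argument does not address it.

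For finite $Z$, your relativised Second Changing Lemma could be made to work, but it is heavier than needed. The paper's argument there is a direct two-line predimension computation: since all changes occur inside $Z$, one has $\delta(A) - \delta(A\cap Z) = \delta'(A) - \delta'(A\cap Z)$ for every finite $A$; applying this with $A = \cl_{\M}(XZ)$ gives $d(X/Z) \ge d'(X/Z')$, and symmetry gives the reverse inequality. No transfinite induction is invoked.

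For infinite $Z$, the paper uses a trick your proposal does not anticipate: first prove the special case where $Z'$ is obtained from $Z$ by \emph{only adding} relations (so that self-sufficient closures can only grow when passing from $\M$ to $\M'$, which lets one find a single finite $Z_1 \subseteq Z$ with $\cl_{\M}(XZ_1)\cap Z = \cl_{\M'}(XZ_1)\cap Z = Z_1$ and run the finite-case computation there). Then reduce the general case to this one by passing through the intermediate structure $Z''$ with $R_i^{Z''} = R_i^{Z} \cap R_i^{Z'}$: both $Z$ and $Z'$ are obtained from $Z''$ by adding relations, so two applications of the special case give $PG_Z(\M) = PG_{Z''}(\M'') = PG_{Z'}(\M')$.
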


\begin{proof}
Let us set up some notation. The value of the predimension depends on the structure. However, instead of distinguishing the structure from the underlying set (as in most of the paper), here we use different notation for the predimension. Depending on whether we are working inside of $\mathcal M$ or $\mathcal M'$, given a finite subset $A\subseteq\mathcal M$ we write
\begin{enumerate}
\item[] $\delta (A)=|A|-\sum_{i\in I}\alpha_i|R_i^{A[\mathcal M]}|$
\item[] $\delta'(A)=|A|-\sum_{i\in I}\alpha_i|R_i^{A[\mathcal M']}|$
\end{enumerate}
with $f(i)=(n_i,\alpha_i)$, where $n_i$ is the arity of $R_i$. Also we write $d$ instead of $d_{\mathcal M}$ and $d'$ instead of $d_{\mathcal M'}$.

First we give a separate proof for the case when $Z$ is finite.

\begin{claim}
Assume $Z$ is finite. Then $PG_Z(\mathcal M)=PG_{Z'}(\mathcal M')$.
\end{claim}
\begin{proof}
First observe that for every finite $A\subseteq\mathcal M$ we have $$\delta(A)-\delta(A\cap Z)=\delta'(A)-\delta'(A\cap Z)$$ because all the changes are done inside $Z$. 

All we have to prove is that $d(A/Z)=d'(A/Z)$. We do the following calculation:

\begin{enumerate}
\item[] $d(A/Z)=d(AZ)-d(Z)$
\item[] $=\delta(\cl^{\mathcal M}(AZ))-\delta(Z)$ (because $Z\leq\mathcal M$)
\item[] $=\delta'(\cl^{\mathcal M}(AZ))-\delta'(Z)$ (because $\cl^{\mathcal M}(AZ)\cap Z=Z$)
\item[] $\geq d'(\cl^{\mathcal M}(AZ))-d'(Z)$ (because $Z'\leq\mathcal M'$)
\item[] $\geq d'(AZ)-d'(Z)$
\item[] $=d'(A/Z)$
\end{enumerate}
We get $d(A/Z)\geq d'(A/Z)$. Proceeding analogously we get $d'(A/Z)\geq d(A/Z)$. We then obtain $d(A/Z)=d'(A/Z)$, concluding the proof of the claim. 
\end{proof}

Before proving the infinite case in full generality we need to prove the following weaker result, using an extra assumption.

\begin{claim}
Let $Z$ be possibly infinite, but $Z'$ obtained from $Z$ by just adding relations. Then $PG_Z(\mathcal M)=PG_{Z'}(\mathcal M')$.
\end{claim}
\begin{proof}
Our extra hypothesis means that for every set $B\subseteq\mathcal M$ we have $\cl_{\mathcal M}(B)\subseteq \cl_{\mathcal M'}(B).$ This is because from $\mathcal M$ to $\mathcal M'$ we are just adding relations, so  if we could decrease the predimension value of $B$ by taking an superset in $\mathcal M$, then the same superset would work as well in $\mathcal M'$.

Let $A$ be a finite subset of $\mathcal M$. Again we have $$\delta(A)-\delta(A\cap Z)=\delta'(A)-\delta'(A\cap Z).$$ Also, all we need to prove is that $d(A/Z)=d'(A/Z)$.

Let $Z_0$ be a finite subset of $Z$ such that $d(A/Z)=d(A/Z_0)$ and $d'(A/Z)=d'(A/Z_0)$: this can be done by choosing $Z_0$ large enough. Let $Z_1=\cl_{\mathcal M'}(AZ_0)\cap Z$. Then $Z_1\leq\mathcal M'$ and it is easy to see that $\cl_{\mathcal M'}(AZ_1)\cap Z=\cl_{\mathcal M}(AZ_1)\cap Z=Z_1$. 
In fact we have that $$Z_1\subseteq \cl_{\mathcal M}(AZ_1)\cap Z\subseteq \cl_{\mathcal M'}(AZ_1)\cap Z=Z_1.$$

Notice that we still have $d(A/Z)=d(A/Z_1)$ and $d'(A/Z)=d'(A/Z_1)$ as $Z_0\subseteq Z_1\subseteq Z$.

Now we are going to prove that we have $d(A/Z)=d'(A/Z)$ by proving the two inequalities separately.

\begin{enumerate}
\item[] $d(A/Z)=d(A/Z_1)$
\item[] $=d(AZ_1)-d(Z_1)$
\item[] $=\delta(\cl_{\mathcal M}(AZ_1))-\delta(Z_1)$ (because $Z_1\leq\mathcal M$)
\item[] $=\delta'(\cl_{\mathcal M}(AZ_1))-\delta'(Z_1)$ (because $\cl_{\mathcal M}(AZ_1)\cap Z=Z_1$)
\item[] $\geq d'(AZ_1)-d'(Z_1)$ (because $Z_1\leq\mathcal M'$)
\item[] $=d'(A/Z_1)$
\item[] $=d'(A/Z)$
\end{enumerate}

The same argument will prove the opposite inequality. Thus $d(A/Z)=d'(A/Z)$ for every finite $A\subseteq\mathcal M$. This proves that $PG_Z(\mathcal M)=PG_{Z'}(\mathcal M')$, proving the claim.
\end{proof}

Now we need to remove the extra assumption made in the previous claim in order to prove the full lemma.

For this we need to consider an intermediate structure $Z''$ with the same underlying set as $Z$ and $Z'$, with relations given by $R_i^{Z''}=R_i^Z\cap R_i^{Z'}$. Notice that $Z''\in\bar {\mathcal C}_f$ because we are just removing relations. Let $\mathcal M''$ be obtained from $\mathcal M$ by replacing $Z$ by $Z''$.

From $Z''$ to $Z$ we are just adding relations, thus by the previous claim we have $PG_{Z''}(\mathcal M'')=PG_Z(\mathcal M)$. But from $Z''$ to $Z'$ are also just adding relations, so again by the previous claim we have $PG_{Z''}(\mathcal M'')=PG_{Z'}(\mathcal M')$. Finally we can conclude that $PG_Z(\mathcal M)=PG_{Z'}(\mathcal M')$, proving the full lemma.
\end{proof}
\medskip
\medskip
\begin{cor}
\label{C531}
Let $f$ be as in Lemma \ref{L525} and let $\mathcal M\in\bar{\mathcal C}_f$. Let $Z\leq\mathcal M$ and $Z'\in\bar{\mathcal C}_f$ be a structure with the same underlying set as $Z$. Let $\mathcal M'$ be obtained from $\mathcal M$ by replacing $Z$ by $Z'$. Then $Z'\leq\mathcal M'\in\bar{\mathcal C}_f$ and
$$\textit{If }\cl^{d_{Z'}}(\emptyset)=Z'\textit{ then }PG_Z(\mathcal M)=PG(\mathcal M').$$
\end{cor}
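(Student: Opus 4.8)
The plan is to deduce this immediately from the Fourth Changing Lemma (Lemma \ref{L530}). That lemma already supplies $Z' \leq \mathcal{M}' \in \bar{\mathcal{C}}_f$ together with $PG_Z(\mathcal{M}) = PG_{Z'}(\mathcal{M}')$, so the entire content of the corollary is the assertion that, under the hypothesis $\cl^{d_{Z'}}(\emptyset) = Z'$, localizing the pregeometry of $\mathcal{M}'$ at $Z'$ changes nothing: $PG_{Z'}(\mathcal{M}') = PG(\mathcal{M}')$. By the definition of the localized pregeometry, this says exactly that $\cl^{d_{\mathcal{M}'}}(Z' \cup A) = \cl^{d_{\mathcal{M}'}}(A)$ for every $A \subseteq \mathcal{M}'$, and a standard pregeometry argument reduces this further to showing $Z' \subseteq \cl^{d_{\mathcal{M}'}}(\emptyset)$.

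So the key step is to prove $Z' \subseteq \cl^{d_{\mathcal{M}'}}(\emptyset)$. First I would record that, because $Z' \leq \mathcal{M}'$, the pregeometry $PG(Z')$ is the one induced on $Z'$ from $PG(\mathcal{M}')$: for finite $A \subseteq Z'$ one has $\cl_{\mathcal{M}'}(A) = \cl_{Z'}(A) \subseteq Z'$ (using transitivity of $\leq$ and that self-sufficient closures of finite sets are finite), hence $d_{\mathcal{M}'}(A) = \delta(\cl_{\mathcal{M}'}(A)) = d_{Z'}(A)$. Next, the hypothesis $\cl^{d_{Z'}}(\emptyset) = Z'$ unwinds to: every $c \in Z'$ satisfies $d_{Z'}(\{c\}) = d_{Z'}(\emptyset) = 0$ (here $d_{Z'}(\emptyset) = 0$ since $Z' \in \bar{\mathcal{C}}_f$). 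Combining the two, $d_{\mathcal{M}'}(\{c\}) = 0 = d_{\mathcal{M}'}(\emptyset)$ for each $c \in Z'$, i.e. $c \in \cl^{d_{\mathcal{M}'}}(\emptyset)$, so $Z' \subseteq \cl^{d_{\mathcal{M}'}}(\emptyset)$.

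The conclusion is then formal: for any $A \subseteq \mathcal{M}'$ we have $Z' \cup A \subseteq \cl^{d_{\mathcal{M}'}}(\emptyset) \cup A \subseteq \cl^{d_{\mathcal{M}'}}(A)$, whence $\cl^{d_{\mathcal{M}'}}(Z' \cup A) = \cl^{d_{\mathcal{M}'}}(A)$ by monotonicity and idempotence of the closure operator; thus $PG_{Z'}(\mathcal{M}') = PG(\mathcal{M}')$, and with Lemma \ref{L530} this yields $PG_Z(\mathcal{M}) = PG(\mathcal{M}')$. I do not expect a serious obstacle here: the corollary is essentially a formal packaging of Lemma \ref{L530} with the translation of the hypothesis into the condition $Z' \subseteq \cl^{d_{\mathcal{M}'}}(\emptyset)$, and the only point meriting a line of care is the identity $d_{\mathcal{M}'}(A) = d_{Z'}(A)$ for finite $A \subseteq Z'$, which rests on facts already recorded in Section \ref{sec2}.
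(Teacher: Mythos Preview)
Your proposal is correct and follows essentially the same approach as the paper: both invoke the Fourth Changing Lemma to get $PG_Z(\mathcal{M})=PG_{Z'}(\mathcal{M}')$, then use $Z'\leq\mathcal{M}'$ together with the hypothesis $\cl^{d_{Z'}}(\emptyset)=Z'$ to deduce $Z'\subseteq\cl^{d_{\mathcal{M}'}}(\emptyset)\subseteq\cl^{d_{\mathcal{M}'}}(A)$, whence $PG_{Z'}(\mathcal{M}')=PG(\mathcal{M}')$. Your write-up simply unpacks the step $\cl^{d_{Z'}}(\emptyset)=\cl^{d_{\mathcal{M}'}}(\emptyset)\cap Z'$ more explicitly via the identity $d_{\mathcal{M}'}\vert_{Z'}=d_{Z'}$, but the argument is the same.
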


\begin{proof}
Let $A$ be a finite subset of $\mathcal M'$. We have that $\cl^{d_{Z'}}(\emptyset)=Z'$ and $Z'\leq\mathcal M'$. Thus $Z'=\cl^{d_{Z'}}(\emptyset)=\cl^{d_{\mathcal M'}}(\emptyset)\cap Z'\subseteq \cl^{d_{\mathcal M'}}(A)$. So we have $\cl^{d_{\mathcal M'}}(A)=\cl^{d_{\mathcal M'}}(AZ')$, this means that $PG_{Z'}(\mathcal M')=PG(\mathcal M')$. But by the Fourth Changing Lemma we have $PG_Z(\mathcal M)=PG_{Z'}(\mathcal M')$, thus $PG_Z(\mathcal M)=PG(\mathcal M')$.
\end{proof}

Now we can say something about the localization in $PG(\mathcal M_n)$. More precisely we show that localizing over a finite subset does not change the isomorphism type of the pregeometry.

\begin{thm}
\label{T532}
Let $Z$ be a finite subset of $\mathcal M_n$. Then $$PG_Z(\mathcal M_n)\simeq PG(\mathcal M_n).$$
\end{thm}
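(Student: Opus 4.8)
The plan is to reduce the statement to the Third and Fourth Changing Lemmas by showing that localizing $PG(\mathcal M_n)$ over a finite set has the same effect as changing a finite self-sufficient subset of $\mathcal M_n$ into a structure with a trivial pregeometry, and that such a change preserves the isomorphism type of the generic model. Throughout, $f$ is the function with $I$ a singleton and $f(\cdot)=(n,1)$, so that $\mathcal C_f=\mathcal C_n$ and $\mathcal M_f=\mathcal M_n$; write $d=d_{\mathcal M_n}$ and $\cl^d=\cl^{d_{\mathcal M_n}}$.

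First I would replace $Z$ by $Z^+:=\cl_{\mathcal M_n}(Z)$. This is a finite self-sufficient subset of $\mathcal M_n$, and since $\cl_{\mathcal M_n}(Z)\subseteq\cl^{d}(Z)$ we have $d(Z^+)=d(Z)$ and $d(AZ^+)=d(AZ)$ for every finite $A\subseteq\mathcal M_n$; hence $d(A/Z)=d(A/Z^+)$ for all such $A$, so $PG_Z(\mathcal M_n)=PG_{Z^+}(\mathcal M_n)$. Thus we may assume $Z=Z^+\leq\mathcal M_n$.

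The key step is to exhibit a finite $L_n$-structure $Z'$ on the same underlying set as $Z$ with $Z'\in\mathcal C_n$ and with a trivial pregeometry in the strong sense that $\cl^{d_{Z'}}(\emptyset)=Z'$. For this, take $Z'$ to be the structure whose only relations are the `loops' $(a,a,\dots,a)$, one $n$-tuple for each point $a$ of $Z$. Then any substructure of $Z'$ on $j$ points contains exactly $j$ tuples of $R^{Z'}$, so $\delta_n(A[Z'])=0$ for every $A\subseteq Z'$. Consequently $Z'\in\mathcal C_n\subseteq\bar{\mathcal C}_f$, and $d_{Z'}(A)=0$ for every finite $A\subseteq Z'$; in particular $\cl^{d_{Z'}}(\emptyset)=\{c\in Z':d_{Z'}(c)=d_{Z'}(\emptyset)\}=Z'$.

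Now let $\mathcal M'$ be the $L_n$-structure obtained from $\mathcal M_n$ by replacing $Z$ by $Z'$ in the sense of Lemma~\ref{L525}. On the one hand, Corollary~\ref{C531} applies, and since $\cl^{d_{Z'}}(\emptyset)=Z'$ it yields $PG_Z(\mathcal M_n)=PG(\mathcal M')$. On the other hand, $Z\leq\mathcal M_n$ is finite, $\mathcal M_n$ is the generic model for $(\mathcal C_n,\leq)$, and $Z'\in\mathcal C_n$ has the same underlying set as $Z$, so the Third Changing Lemma (Lemma~\ref{L528}) gives $\mathcal M'\simeq\mathcal M_n$, whence $PG(\mathcal M')\simeq PG(\mathcal M_n)$. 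Combining, $PG_Z(\mathcal M_n)=PG_{Z^+}(\mathcal M_n)=PG(\mathcal M')\simeq PG(\mathcal M_n)$, as required. The only genuine content here is the choice of $Z'$: once the right `$0$-dimensional' structure on $|Z|$ points has been identified, the Changing Lemmas do all the remaining work, so I do not expect a serious obstacle — the routine checks are that $Z'$ is indeed in $\mathcal C_n$ and that localization at $Z$ agrees with localization at its self-sufficient closure.
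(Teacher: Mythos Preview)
Your proof is correct and follows essentially the same route as the paper: reduce to $Z\leq\mathcal M_n$ (the paper does this via $d$-closure, you via self-sufficient closure, but both work), take $Z'$ to be the structure with exactly the loops $(c,\ldots,c)$ so that $\delta_n(Z')=0$, then apply Corollary~\ref{C531} and the Third Changing Lemma.
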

\begin{proof} If $Y, Z \subseteq \M_n$ have the same $d$-closure then clearly $PG_Z(\M_n) = PG_Y(\M_n)$. Thus we may assume that $Z \leq \M_n$.  
Let $Z'$ be a structure in $\mathcal C_n$ with the same underlying set as $Z$ and $\delta(Z')=0$. This can be done: take for example $R_n^{Z'}=\{(c,\cdots,c):c\in Z'\}$. Let $\mathcal M_n'$ be obtained from $\mathcal M_n$ by replacing $Z$ by $Z'$. By the Third Changing Lemma we have $\mathcal M_n\simeq\mathcal M_n'$, in particular $PG(\mathcal M_n)\simeq PG(\mathcal M_n')$. But as $d_{Z'}(Z')=\delta(Z')=0$, by the Corollary \ref{C531} of the Fourth Changing Lemma we have $PG_Z(\mathcal M_n)=PG(\mathcal M_n')$, thus $PG_Z(\mathcal M_n)\simeq PG(\mathcal M_n)$.

\end{proof}

We finish this section with a consequence of the last theorem.

\begin{thm}
Let $m\neq n$ be natural numbers $\neq 0$. Then $PG(\mathcal M_m)$ and $PG(\mathcal M_n)$ are not locally isomorphic.
\end{thm}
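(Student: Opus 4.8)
The plan is to deduce this directly from Theorems~\ref{T532} and~\ref{T527}. Recall that $PG(\mathcal{M}_m)$ and $PG(\mathcal{M}_n)$ being \emph{locally isomorphic} means that there exist finite subsets $Z\subseteq\mathcal{M}_m$ and $W\subseteq\mathcal{M}_n$ with $PG_Z(\mathcal{M}_m)\simeq PG_W(\mathcal{M}_n)$. Without loss of generality I would assume $n<m$, and suppose, for a contradiction, that such $Z$ and $W$ exist.

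Then I would apply Theorem~\ref{T532} twice: once to $\mathcal{M}_m$ with the finite set $Z$, giving $PG_Z(\mathcal{M}_m)\simeq PG(\mathcal{M}_m)$, and once to $\mathcal{M}_n$ with the finite set $W$, giving $PG_W(\mathcal{M}_n)\simeq PG(\mathcal{M}_n)$. Composing these two isomorphisms with the assumed isomorphism $PG_Z(\mathcal{M}_m)\simeq PG_W(\mathcal{M}_n)$ produces an isomorphism $PG(\mathcal{M}_m)\simeq PG(\mathcal{M}_n)$. This contradicts Theorem~\ref{T527}, which says precisely that $PG(\mathcal{M}_n)\ncong PG(\mathcal{M}_m)$ when $n<m$. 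Hence no such $Z,W$ exist, i.e.\ $PG(\mathcal{M}_m)$ and $PG(\mathcal{M}_n)$ are not locally isomorphic.

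There is no real obstacle here: all the substance has already been isolated in Theorems~\ref{T527} and~\ref{T532}. The role of Section~5, and of Theorem~\ref{T532} in particular, is exactly to show that passing to the localization at a finite set leaves the isomorphism type of $PG(\mathcal{M}_n)$ unchanged, so the non-isomorphism of Theorem~\ref{T527} automatically strengthens to non-local-isomorphism. The only point requiring a moment's thought is that the argument is insensitive to the precise formulation of ``locally isomorphic'': whether one localizes at a single point, at an arbitrary finite set, or only on one of the two sides, Theorem~\ref{T532} applies to each localization involved, and the reduction to Theorem~\ref{T527} goes through verbatim. By contrast, Corollary~\ref{cor39} shows that the two pregeometries cannot be separated merely by their isomorphism types of finite subpregeometries, so the full strength of Theorem~\ref{T527} (and hence of the Changing Lemmas behind it) is genuinely needed for this conclusion.
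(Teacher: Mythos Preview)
Your argument is correct and is exactly the paper's own proof: apply Theorem~\ref{T532} to reduce local isomorphism to isomorphism, then invoke Theorem~\ref{T527} for the contradiction. The paper states this in one sentence, but your more explicit unpacking of the two applications of Theorem~\ref{T532} and the composition of isomorphisms is entirely in line with it.
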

\begin{proof}
By the last theorem, if they are locally isomorphic they must be isomorphic, which we know that is not true by Theorem \ref{T527}. Thus they are not locally isomorphic.
\end{proof}

\section{A generic structure for pregeometries}

The original motivation for this section was an attempt to prove that $PG(\mathcal M_n)$ and $PG(\mathcal M_m)$ \textit{are} isomorphic for $m,n\geq 3$. The idea was to try to recognize both pregeometries as a generic model of the same class of pregeometries and use the uniqueness of the generic model up to isomorphism: this idea was motivated by the fact that the isomorphism types of finite subpregeometries are the same in both pregeometries. Of course this idea cannot work as we have seen in Theorem \ref{T527} that $PG(\mathcal M_n)$ and $PG(\mathcal M_m)$ are not isomorphic. However we can in fact see $PG(\mathcal M_n)$ as a generic model of a class of pregeometries, but the appropriate \textit{embeddings} of the class change with $n$. 

\begin{defn} \rm
Let $n$ be a natural number greater than zero and let $(\mathcal C_n,\leq_n)$ be the usual amalgamation class corresponding to arity $n$. Here we use the notation $\leq_n$ instead of $\leq$ to distinguish self-sufficiency corresponding to different arities. We now consider $(\mathcal C_n,\leq_n)$ as a category where the objects are the elements of $\mathcal C_n$ and where the morphisms between objects $A,B\in\mathcal C_n$ are the strong embeddings between the $A$ and $B$, more precisely the embeddings $f:A\to B$ such that $f(A)\leq_n B$. 

Now we apply a functor to $(\mathcal C_n,\leq_n)$ that forgets the structure and remembers only the associated pregeometries and we obtain in this way a class of pregeometries $(P_n,\unlhd_n)$. More precisely, given two finite pregeometries $A\subseteq B$ we say that $A\unlhd_n B$ if and only if there are structures $\widetilde A,\widetilde B\in\mathcal C_n$ such that $PG(\widetilde A)=A$ and $PG(\widetilde B)=B$ and $\widetilde A\leq_n\widetilde B$. The objects are given by $P_n=\{A:A\mbox{ is a finite pregeometry and }\emptyset\unlhd_n A\}$. Also, we can see the category $(P_n,\unlhd_n)$ as a class of relational structures in the language $LPI$ of pregeometries, the morphisms are (some of the) embeddings in this language.
\end{defn}

\begin{rem} \rm
$P_n$ is closed under isomorphism, however not closed under substructures. Also $\unlhd_n$ is invariant under isomorphism, that is, if $f:B\to B'$ is an isomorphism of pregeometries and $A\subseteq B$ then we have $A\unlhd_n B$ if and only if $f(A)\unlhd_n f(B)$. 
\end{rem}

\begin{prop}
$P_3\subsetneq P_4$.
\end{prop}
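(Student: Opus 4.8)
The plan is to establish the two parts separately: first the inclusion $P_3 \subseteq P_4$, and then the strictness $P_3 \neq P_4$. For the inclusion, I would take a finite pregeometry $A \in P_3$, so there is a structure $\widetilde{A} \in \mathcal{C}_3$ with $PG(\widetilde{A}) = A$ and $\emptyset \leq_3 \widetilde{A}$. Following the idea already used in Lemma \ref{L522}, I would replace each relation $(a_1,a_2,a_3) \in R_3^{\widetilde{A}}$ by the $4$-ary relation $(a_1,a_2,a_3,a_3)$, obtaining an $L_4$-structure $\widetilde{A}^h$ on the same underlying set. Since the predimension is unchanged by this operation, $\widetilde{A}^h \in \mathcal{C}_4$, $PG(\widetilde{A}^h) = PG(\widetilde{A}) = A$, and $\emptyset \leq_4 \widetilde{A}^h$ (self-sufficiency of $\emptyset$ only depends on the predimension values of substructures, which are preserved). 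Hence $A \in P_4$.

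For strictness, I would exhibit a pregeometry $A \in P_4 \setminus P_3$. The natural candidate is the pregeometry on a $4$-element set $X = \{a_1,a_2,a_3,a_4\}$ carrying a single $4$-ary relation $(a_1,a_2,a_3,a_4)$: call this structure $\widetilde{X} \in \mathcal{C}_4$, so $\delta_4(\widetilde{X}) = 4 - 1 = 3$ and every proper subset has predimension equal to its size; thus $\emptyset \leq_4 \widetilde{X}$ and the associated pregeometry $A = PG(\widetilde{X})$ is a rank-$3$ pregeometry on $4$ points in which every proper subset is independent but the whole set is dependent (a "circuit" of size $4$). This shows $A \in P_4$. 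The key point is then that $A \notin P_3$: there is no structure $\widetilde{A} \in \mathcal{C}_3$ with $PG(\widetilde{A}) = A$ and $\emptyset \leq_3 \widetilde{A}$. Suppose there were such a $\widetilde{A}$ on underlying set $X$. Since $\emptyset \leq_3 \widetilde{A}$, we have $d_{\widetilde{A}}(X) = \delta_3(\widetilde{A}) = 4 - |R_3^{\widetilde{A}}|$, and this must equal the rank of $A$, which is $3$; so $|R_3^{\widetilde{A}}| = 1$. But a single $3$-ary relation involves at most $3$ of the four points, so at least one point, say $a_4$, lies in no relation of $\widetilde{A}$; then $\{a_1,a_2,a_3\}$ already has predimension $3 - 1 = 2 = d_{\widetilde{A}}(X)$, forcing $a_4 \in \cl^{d_{\widetilde{A}}}(\{a_1,a_2,a_3\})$, i.e. $\{a_1,a_2,a_3\}$ is a spanning set of size $3$ — consistent so far — but then the triple $\{a_1,a_2,a_3\}$ has dimension $2 < 3$, so it is a dependent set of size $3$ in $A$. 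This contradicts the fact that in $A$ every proper subset of $X$ is independent. Hence no such $\widetilde{A}$ exists and $A \notin P_3$.

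The main obstacle is the strictness argument, specifically the case analysis ruling out \emph{every} possible $\mathcal{C}_3$-structure realizing the pregeometry $A$: one must be careful that the argument covers all placements of a single $3$-ary relation on four points (it always misses a point), and that one correctly translates "$\emptyset \leq_3 \widetilde{A}$ and $PG(\widetilde{A}) = A$" into numerical constraints on $|R_3^{\widetilde{A}}|$ via the identities $d_{\widetilde{A}}(X) = \delta_3(\widetilde{A})$ (valid since $\emptyset \leq_3 \widetilde{A}$ means $X \leq_3 \widetilde{A}$, so $X$ itself computes its dimension) and $d_{\widetilde{A}}(X) = \mathrm{rank}(A) = 3$. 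The inclusion $P_3 \subseteq P_4$ is routine once the padding trick from Lemma \ref{L522} is invoked.
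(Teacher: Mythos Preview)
Your proof is correct and follows essentially the same approach as the paper: the padding $(a,b,c)\mapsto(a,b,c,c)$ for the inclusion, and the $4$-point single-relation pregeometry for strictness, with the contradiction coming from the fact that a single $3$-ary relation forces some proper subset to have dimension below its size. One small remark: the justification ``$\emptyset \leq_3 \widetilde{A}$ means $X \leq_3 \widetilde{A}$'' is unnecessary and slightly off --- the identity $d_{\widetilde{A}}(X)=\delta_3(\widetilde{A})$ holds trivially because $X$ is the whole underlying set, so there is nothing larger to minimize over.
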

\begin{proof}
If $A\in P_3$ then there is a structure $\widetilde A\in\mathcal C_3$ with $PG(\widetilde A)=A$. Now we change the structure $\widetilde A$ to $\widehat A$ by replacing each relation $(a,b,c)$ by the relation $(a,b,c,c)$. Notice that $\widehat A\in\mathcal C_4$ and $PG(\widehat A)=PG(\widetilde A)=A$, thus $A\in P_4$. We have proved that $P_3\subseteq P_4$.

However this is a proper inclusion. Consider a structure $B\in\mathcal C_4$ consisting of $4$ distinct points $a,b,c,d$ and only one relation $(a,b,c,d)$. The pregeometry associated to $B$ can be described by saying that for $X\subseteq B$ and $|X|\leq 3$ we have $d_B(X)=|X|$ and $d_B(B)=3$. We have $PG(B)\in P_4$. However, this pregeometry is not in $P_3$ because there is no structure in $\mathcal C_3$ matching this pregeometry. In fact, suppose that there is a structure $B'\in\mathcal C_3$ with $PG(B')=PG(B)$. We have $d_{B'}(B')=\delta(B')=3$, thus there is exactly one relation in $B'$, say it is $(x,y,z)$ with $x,y,z\in B'$. We would have $d_{B'}(\{x,y,z\})\leq\delta(\{x,y,z\})=|\{x,y,z\}|-1<|\{x,y,z\}|$ which does not happen in $PG(B)$. We have proved that $P_3\subsetneq P_4$.
\end{proof}

\begin{prop}
In the class $(P_n,\unlhd_n)$ the relation $\unlhd_n$ is transitive.
\end{prop}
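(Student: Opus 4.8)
The plan is to reduce to the Second Changing Lemma (Lemma~\ref{L526}) to reconcile the two a priori different $L_n$-structures that witness the ``middle'' pregeometry. Suppose $A\unlhd_n B$ and $B\unlhd_n C$. Unpacking the definitions, there are $\widetilde A,\widetilde B_1\in\mathcal C_n$ with $PG(\widetilde A)=A$, $PG(\widetilde B_1)=B$ and $\widetilde A\leq_n\widetilde B_1$; and there are $\widetilde B_2,\widetilde C\in\mathcal C_n$ with $PG(\widetilde B_2)=B$, $PG(\widetilde C)=C$ and $\widetilde B_2\leq_n\widetilde C$. Note that $\widetilde B_1$ and $\widetilde B_2$ have the same underlying set, namely the underlying set of $B$, but need not be the same structure; this is the only thing that prevents us from immediately concatenating the two witnessing chains.

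Next I would replace $\widetilde B_2$ by $\widetilde B_1$ inside $\widetilde C$. Let $\widetilde C'$ be the structure obtained from $\widetilde C$ by putting $R_i^{\widetilde C'}=(R_i^{\widetilde C}\setminus R_i^{\widetilde B_2})\cup R_i^{\widetilde B_1}$ for each $i$, as in Lemma~\ref{L525}. Applying the Second Changing Lemma with $\mathcal M=\widetilde C$, the ``$A$'' of the lemma taken to be $\widetilde B_2$ and the ``$A'$'' taken to be $\widetilde B_1$ (legitimate since $\widetilde B_2\leq_n\widetilde C$, $\widetilde B_1\in\bar{\mathcal C}_n$, and $PG(\widetilde B_1)=B=PG(\widetilde B_2)$), we obtain $\widetilde B_1\leq_n\widetilde C'\in\bar{\mathcal C}_n$ together with $PG(\widetilde C')=PG(\widetilde C)=C$. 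Since $\widetilde C'$ is finite, $\widetilde C'\in\mathcal C_n$.

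Finally, I would invoke transitivity of self-sufficiency: from $\widetilde A\leq_n\widetilde B_1$ and $\widetilde B_1\leq_n\widetilde C'$ we get $\widetilde A\leq_n\widetilde C'$. As $PG(\widetilde A)=A$, $PG(\widetilde C')=C$ and $\widetilde A,\widetilde C'\in\mathcal C_n$, the pair $(\widetilde A,\widetilde C')$ witnesses $A\unlhd_n C$, as required.

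The only real step is the observation that the two representations $\widetilde B_1,\widetilde B_2$ of $B$ must be glued before the chains can be composed, and that the Second Changing Lemma does exactly this while preserving both self-sufficiency and the associated pregeometry; everything else (finiteness of $\widetilde C'$, transitivity of $\leq_n$) is routine.
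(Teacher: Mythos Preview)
Your argument is correct and coincides with the paper's own proof: both replace the second witness $\widetilde B_2$ for $B$ inside $\widetilde C$ by the first witness $\widetilde B_1$, invoke the (First and) Second Changing Lemma to get $\widetilde B_1\leq_n\widetilde C'\in\mathcal C_n$ with $PG(\widetilde C')=C$, and then use transitivity of $\leq_n$. The only cosmetic difference is that the paper cites the First and Second Changing Lemmas separately, whereas you observe that the statement of the Second already subsumes the conclusion of the First.
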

\begin{proof}
Suppose that we have $A\unlhd_n B$ and $B\unlhd_n C$. Suppose that $\widehat A,\widehat B,\widetilde B,\widetilde C\in\mathcal C_n$ such that $PG(\widehat A)=A$, $PG(\widehat B)=PG(\widetilde B)=B$, $PG(\widetilde C)=C$ and such that $\widehat A\leq_n\widehat B$ and $\widetilde B\leq_n\widetilde C$.

Now we construct a structure $\widehat C$ obtained from $\widetilde C$ by replacing $\widetilde B$ by $\widehat B$. By the First Changing Lemma we have $\widehat B\leq_n\widehat C\in\mathcal C_n$ and as $PG(\widetilde B)=PG(\widehat B)$ then by the Second Changing Lemma we have $PG(\widehat C)=PG(\widetilde C)=C$. Now we have $\widehat A\leq_n\widehat B$ and $\widehat B\leq_n\widehat C$. Thus by transitivity of $\leq_n$ we get $\widehat A\leq_n\widehat C$ with $PG(\widehat A)=A$ and $PG(\widehat C)=C$, thus $A\unlhd_n C$.
\end{proof}

\begin{prop}
The class $(P_n,\unlhd_n)$ is an amalgamation class. More precisely, if we have $A_0\unlhd_n A_1\in P_n$ and $A_0\unlhd_n A_2\in P_n$ then there exists $P\in P_n$ and embeddings of pregeometries $f_i:A_i\to P$, for $i\in\{1,2\}$, such that $f_i(A_i)\unlhd_n P$.
\end{prop}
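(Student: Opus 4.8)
The plan is to realize the amalgamation of pregeometries by lifting everything to the category of $L_n$-structures, where we already have the $\leq_n$-free amalgamation property, and then pushing the result back down via the ``forget the structure, keep the pregeometry'' functor. Concretely: given $A_0 \unlhd_n A_1$ and $A_0 \unlhd_n A_2$ in $P_n$, choose by definition of $\unlhd_n$ structures $\widetilde{A_0}^{(1)} \leq_n \widetilde{A_1}$ in $\mathcal C_n$ with $PG(\widetilde{A_0}^{(1)}) = A_0$, $PG(\widetilde{A_1}) = A_1$, and similarly $\widetilde{A_0}^{(2)} \leq_n \widetilde{A_2}$ with $PG(\widetilde{A_0}^{(2)}) = A_0$, $PG(\widetilde{A_2}) = A_2$. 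The first issue is that these two chosen representatives of $A_0$ need not be the same $L_n$-structure. First I would fix this: using the Second Changing Lemma (Lemma \ref{L526}) applied inside $\widetilde{A_2}$, replace the substructure $\widetilde{A_0}^{(2)}$ by $\widetilde{A_0}^{(1)}$ (this is legitimate since $\widetilde{A_0}^{(2)} \leq_n \widetilde{A_2}$, both have the same underlying set, and $PG(\widetilde{A_0}^{(2)}) = A_0 = PG(\widetilde{A_0}^{(1)})$). By Lemma \ref{L526} the ambient pregeometry is unchanged, so we obtain a structure $\widetilde{A_2}'$ with $PG(\widetilde{A_2}') = A_2$ and $\widetilde{A_0}^{(1)} \leq_n \widetilde{A_2}'$. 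Write $A_0^* := \widetilde{A_0}^{(1)}$; we now have a genuine span $\widetilde{A_1} \;\geq_n\; A_0^* \;\leq_n\; \widetilde{A_2}'$ in $\mathcal C_n$.

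Next I would apply the $\leq_n$-free amalgamation property (the Proposition following the definition of generic structure in Section \ref{sec2}), arranging the underlying sets so that $\widetilde{A_1} \cap \widetilde{A_2}' = A_0^*$: form $F = \widetilde{A_1} \amalg_{A_0^*} \widetilde{A_2}'$. Then $F \in \mathcal C_n$ and $\widetilde{A_1} \leq_n F$; by the symmetry of free amalgamation (or by $A_0^* \leq_n F$ together with submodularity) one also gets $\widetilde{A_2}' \leq_n F$. Set $P := PG(F)$. Then $\emptyset \leq_n F$ gives $\emptyset \unlhd_n P$, so $P \in P_n$. The inclusions $\widetilde{A_1} \hookrightarrow F$ and $\widetilde{A_2}' \hookrightarrow F$ are strong embeddings, hence (as noted repeatedly in Sections 3--4, since a strong embedding preserves the dimension function and therefore induces an embedding of pregeometries) they yield embeddings of pregeometries $f_1 : A_1 \to P$ and $f_2 : A_2 \to P$ with $f_i(A_i) \unlhd_n P$ — the latter directly from the definition of $\unlhd_n$, witnessed by $\widetilde{A_i}$ (resp. $\widetilde{A_2}'$) inside $F$. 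Finally one checks $f_1$ and $f_2$ agree on $A_0$: both restrict to the pregeometry isomorphism induced by the identity on the common set $A_0^*$, so the square commutes.

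The step I expect to be the main obstacle is the first one — reconciling the two a priori different $L_n$-structure representatives of $A_0$ so that free amalgamation can be applied over a single object. Everything after that is formal: free amalgamation is already available, and the passage between strong embeddings and pregeometry embeddings is routine and has been used several times already. One subtlety worth stating carefully in the write-up is that $\unlhd_n$ is defined via \emph{existence} of representing structures, so to conclude $f_i(A_i) \unlhd_n P$ it suffices to exhibit \emph{one} pair of structures in $\mathcal C_n$ realizing the pregeometries $f_i(A_i) \subseteq P$ with a strong embedding between them, and the copies of $\widetilde{A_1}$, $\widetilde{A_2}'$ sitting strongly inside $F$ do exactly this. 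A final remark: the Remark after the definition of $(P_n, \unlhd_n)$ records that $\unlhd_n$ is isomorphism-invariant, which lets us move the underlying sets around freely when setting up the free amalgam.
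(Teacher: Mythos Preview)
Your proposal is correct and follows essentially the same approach as the paper: lift the two $\unlhd_n$-embeddings to $\mathcal C_n$, use the Changing Lemmas to reconcile the two representatives of $A_0$ into a single $L_n$-structure, amalgamate in $(\mathcal C_n,\leq_n)$, and then push the result back down to pregeometries. The only cosmetic difference is that the paper cites the First and Second Changing Lemmas separately (for $\widehat A_0\leq_n\widehat A_2\in\mathcal C_n$ and $PG(\widehat A_2)=A_2$ respectively), whereas you cite only the Second, which already subsumes the First's conclusion.
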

\begin{proof}
Suppose that we have $A_0\unlhd_n A_1\in P_n$ and $A_0\unlhd_n A_2\in P_n$. Let $\widehat A_0\leq_n\widehat A_1\in\mathcal C_n$ and $\widetilde A_0\leq_n\widetilde A_2\in\mathcal C_n$ be the corresponding lifts to $(\mathcal C_n,\leq_n)$. Let $\widehat A_2$ be the structure obtained from $\widetilde A_2$ by replacing $\widetilde A_0$ by $\widehat A_0$ then by the First Changing Lemma we have $\widehat A_0\leq_n\widehat A_2\in\mathcal C_n$, moreover, as $PG(\widetilde A_0)=PG(\widehat A_0)$ then by the Second Changing Lemma we have $PG(\widehat A_2)=PG(\widetilde A_2)=A_2$. Now we use the fact that $(\mathcal C_n,\leq_n)$ is an amalgamation class and the fact that $\widehat A_0\leq_n\widehat A_1\in\mathcal C_n$ and $\widehat A_0\leq_n\widehat A_2\in\mathcal C_n$ to construct a structure $C\in\mathcal C_n$ and embeddings $f_1:\widehat A_1\to C$ and $f_2:\widehat A_2\to C$ fixing elements of $\widehat A_0$ such that $f_1(\widehat A_1)\leq_n C$ and $f_2(\widehat A_2)\leq_n C$. We have $PG(f_i(\widehat A_i))\unlhd_n PG(C)$, that is, $f_i(A_i)\unlhd_n PG(C)\in P_n$. In other words, $f_i:A_i\to PG(C)$ are strong embeddings of pregeometries fixing elements of $A_0$ and $PG(C)\in P_n$. This proves that $(P_n,\unlhd_n)$ is an amalgamation class.
\end{proof}

We would like to define a notion of generic model for the class $(P_n,\unlhd_n)$. However $P_n$ is not closed under substructures but this is not a real barrier, we just need to adapt the definition of generic model to this context.

\begin{defn} \rm
\label{D540}
Let $(\mathcal C,\leq)$ be a class of finite relational structures in a countable language $L$ with countably many isomorphism types. Let $\leq$ a binary relation such that $A\leq B$ implies that $A$ is a substructure of $B$. Assume that $\mathcal C$ is closed under isomorphism (but not necessarily under substructures) and that $\leq$ is invariant under isomorphism. We say that an $L$-structure $\mathcal M$ is a generic model for $(\mathcal C,\leq)$ if $\mathcal M$ is countable and satisfies $FC 1$ and $FC 2$ where:
\begin{enumerate}
\item[$FC1$] There is a chain $M_0\leq M_1\leq M_2\leq\cdots$ with $M_i\in\mathcal C$ and $\bigcup_{i\in\mathbbm N}M_i=\mathcal M$.
\item[$FC2$] (Extension property) If $A\leq M_i$ and $A\leq B\in\mathcal C$ then there are $j\in\mathbbm N$ and an embedding $f:B\to M_j$ such that $f_{|A}=Id_{|A}$ and $f(B)\leq M_j$.
\end{enumerate}
\end{defn}

\begin{defn} \rm
Let $\mathcal M$ be a generic model for a class $(\mathcal C,\leq)$ as in the above definition, with respect to a chain $M_0\leq M_1\leq M_2\leq\cdots$ Let $A$ be a finite substructure of $\mathcal M$ such that $A\in\mathcal C$. We say that $A\leq\mathcal M$ if $A\leq M_i$ for some $i\in\mathbbm N$. Notice that, at least apparently, the set of $\leq$-subsets of $\mathcal M$ depends not only on the structure of $\mathcal M$ but also on the choice of the chain (so for example, it is not \textit{a priori} necessarily preseved by automorphisms).
\end{defn}

Now we state the existence and uniqueness up to isomorphism of the generic model. The proof of this result is only a straightforward modification of the standard procedure.

\begin{prop}
Let $(\mathcal C,\leq)$ be as in definition \ref{D540}. Assume that $\emptyset\leq A$ for all $A\in\mathcal C$, that $\leq$ is transitive and that $(\mathcal C,\leq)$ is an amalgamation class. Then there is a generic model for $(\mathcal C,\leq)$ in the sense of definition \ref{D540} and it is unique up to isomorphism.
\end{prop}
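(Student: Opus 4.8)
The plan is to run the usual Fra\"iss\'e-style chain construction for existence and a back-and-forth argument for uniqueness, being careful only at the two points where Definition \ref{D540} departs from the classical setting: $\mathcal C$ is not assumed closed under substructures, and $\leq$ is assumed only transitive (together with $\emptyset\leq A$ for all $A\in\mathcal C$), with no closure operation available.

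\textbf{Existence.} I would build the chain $M_0\leq M_1\leq\cdots$ recursively, starting from $M_0=\emptyset$ (using $\emptyset\leq M_0$, which holds by hypothesis). Since $\mathcal C$ has only countably many isomorphism types and each $M_n$ is finite, at each stage there are only countably many \emph{requirements}, namely pairs consisting of a substructure $A$ with $A\leq M_s$ (for the stage $s$ at which it first appears) and an isomorphism type of $B\in\mathcal C$ with $A\leq B$. Fix a bookkeeping enumeration of all requirements over all stages. When a requirement $(A,B)$ is handled at stage $n$, transitivity of $\leq$ and the chain condition give $A\leq M_s\leq M_n$, so $A\leq M_n\in\mathcal C$ and $A\leq B\in\mathcal C$; applying the amalgamation property yields $P\in\mathcal C$ with strong embeddings of $M_n$ and of $B$ into $P$ agreeing on $A$. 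Identifying $M_n$ with its image, set $M_{n+1}=P$, so $M_n\leq M_{n+1}\in\mathcal C$ and $B$ is realized over $A$ as a strong substructure of $M_{n+1}$; if the requirement is already met, set $M_{n+1}=M_n$. Note that being met is a persistent property: if $f(B)\leq M_t$ then $f(B)\leq M_t\leq M_{t'}$ for $t'\geq t$, again by transitivity. Put $\mathcal M=\bigcup_n M_n$; then FC1 is immediate and FC2 holds because every requirement is eventually handled and stays met.

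\textbf{Uniqueness.} Given generic models $\mathcal M=\bigcup_i M_i$ and $\mathcal N=\bigcup_j N_j$, I would construct an isomorphism by back-and-forth, where at each step the domain (respectively range) is enlarged to an \emph{entire} $M_i$ (respectively $N_j$) rather than by a single point --- this is the device that replaces the missing closure operation. Maintain a chain of partial isomorphisms $f_k:A_k\to B_k$ with $A_k\in\mathcal C$, $A_k\leq\mathcal M$ (i.e.\ $A_k\leq M_l$ for some $l$) and $B_k\leq\mathcal N$; start with $A_0=M_0$ and $B_0=g(M_0)\leq N_{j_0}$, where $g$ is obtained from FC2 for $\mathcal N$ applied with $A=\emptyset$. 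At a forth step, to force the next element $a$ of a fixed enumeration of $\mathcal M$ into the domain, pick $l$ with $a\in M_l$ and $A_k\leq M_l$ (possible by transitivity and the chain condition), transport $A_k\leq M_l$ across $f_k$ to a structure $B'\cong M_l$ with $B_k\leq B'\in\mathcal C$ (using that $\mathcal C$ and $\leq$ are isomorphism-invariant), and apply FC2 for $\mathcal N$ to $B_k\leq N_m$ and $B_k\leq B'$ to embed $B'$ over $B_k$ as a strong substructure of some $N_{m'}$. Set $A_{k+1}=M_l$ and $B_{k+1}$ its image; the invariant $A_{k+1}\leq\mathcal M$ survives because $A_{k+1}=M_l\leq M_{l+1}$. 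Back steps are symmetric, using FC2 for $\mathcal M$, and interleaving the two enumerations in the usual way makes $\bigcup_k A_k=\mathcal M$ and $\bigcup_k B_k=\mathcal N$; since the language is relational and finitary, $\bigcup_k f_k$ is the desired isomorphism.

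\textbf{Main obstacle.} The only genuinely new point --- the one I would verify most carefully --- is exactly the failure of $\mathcal C$ to be closed under substructures together with the absence of a closure operation: in the classical argument one extends a partial isomorphism by closing off $A_k\cup\{a\}$ inside some $M_i$, but here there need be no member of $\mathcal C$ between $A_k\cup\{a\}$ and $M_i$. The remedy is to jump all the way up to $M_i$ and transfer the burden to the $\mathcal N$-side via FC2; checking that the invariants $A_k\in\mathcal C$, $A_k\leq\mathcal M$, $B_k\leq\mathcal N$ persist under this jump --- which is where one uses that $\leq$ is isomorphism-invariant, transitive, and coheres with the defining chains (so that $M_l\leq M_{l+1}$ witnesses $M_l\leq\mathcal M$) --- is the crux. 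Everything else is the routine Fra\"iss\'e bookkeeping.
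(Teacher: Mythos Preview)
Your proposal is correct and is precisely what the paper has in mind: the paper does not give a proof but simply remarks that ``the proof of this result is only a straightforward modification of the standard procedure,'' and your outline is exactly that modification --- the standard Fra\"iss\'e construction for existence and back-and-forth for uniqueness, with the one non-classical wrinkle (no closure under substructures, no closure operator) handled by jumping to a full chain member $M_l$ rather than adjoining a single point. Your identification of this as the crux, and your verification that transitivity together with the chain condition $M_l\leq M_{l+1}$ preserves the invariants, is right on target.
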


Now we can prove the main result of this section.

\begin{thm}
\label{T543}
There is a generic model $\mathcal P_n$ (unique up to isomorphism) for the class $(P_n,\unlhd_n)$ and $$\mathcal P_n\simeq PG(\mathcal M_n)$$ where $\mathcal M_n$ is the generic model for the class $(\mathcal C_n,\leq_n)$.
\end{thm}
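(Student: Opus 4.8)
The plan is to verify that $PG(\mathcal{M}_n)$, equipped with a suitable chain of self-sufficient finite substructures, satisfies the two axioms $FC1$ and $FC2$ of Definition \ref{D540} for the class $(P_n, \unlhd_n)$; then the existence and uniqueness result for generic models gives $\mathcal{P}_n \simeq PG(\mathcal{M}_n)$ immediately. First I would fix a chain $\widetilde M_0 \leq_n \widetilde M_1 \leq_n \cdots$ of finite structures in $\mathcal{C}_n$ with union $\mathcal{M}_n$ (available by (F1$'$)), and set $M_i = PG(\widetilde M_i)$. Since $\widetilde M_i \leq_n \widetilde M_{i+1}$ we have $M_i \unlhd_n M_{i+1}$ by definition of $\unlhd_n$, and $\bigcup_i M_i = PG(\mathcal{M}_n)$ because dimension in $\mathcal{M}_n$ of a finite set is computed inside some $\widetilde M_i$ (the self-sufficient closure is finite), so $FC1$ holds. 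Note each $M_i \in P_n$ since $\emptyset \leq_n \widetilde M_i$.

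Next I would check $FC2$. Suppose $A \unlhd_n M_i$ (in the sense of the chain, so there is a witnessing lift $\widehat A \leq_n \widetilde M_i$ with $PG(\widehat A) = A$) and $A \unlhd_n B \in P_n$, witnessed by $\widehat A{}' \leq_n \widehat B \in \mathcal{C}_n$ with $PG(\widehat A{}') = A$, $PG(\widehat B) = B$. The key preliminary move is to align the two lifts of $A$: since $PG(\widehat A) = PG(\widehat A{}') = A$, I apply the First and Second Changing Lemmas (Lemmas \ref{L525}, \ref{L526}) to replace $\widehat A{}'$ by $\widehat A$ inside $\widehat B$, obtaining $\widehat B'$ with $\widehat A \leq_n \widehat B' \in \mathcal{C}_n$ and $PG(\widehat B') = B$. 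Now $\widehat A \leq_n \widetilde M_i$ and $\widehat A \leq_n \widehat B'$ are honest data in $(\mathcal{C}_n, \leq_n)$, so the extension property (F2) of $\mathcal{M}_n$ yields an embedding $g : \widehat B' \to \mathcal{M}_n$ fixing $\widehat A$ pointwise with $g(\widehat B') \leq_n \mathcal{M}_n$; choosing $j$ large enough that $g(\widehat B') \subseteq \widetilde M_j$ and (since $g(\widehat B') \leq_n \mathcal{M}_n$) in fact $g(\widehat B') \leq_n \widetilde M_j$, the induced map on pregeometries $\bar g : B \to M_j$ is an embedding of pregeometries with $\bar g(B) \unlhd_n M_j$ and $\bar g|_A = \mathrm{id}$. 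This establishes $FC2$.

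Having verified $FC1$ and $FC2$, I invoke the preceding Proposition (existence and uniqueness of the generic model for a transitive amalgamation class with $\emptyset \leq A$ for all $A$, all of which were just proved for $(P_n, \unlhd_n)$) to conclude $PG(\mathcal{M}_n) \simeq \mathcal{P}_n$.

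I expect the main obstacle to be the bookkeeping around the \emph{two} potentially different lifts of the same finite pregeometry $A$ — one coming from the chain inside $\mathcal{M}_n$ and one coming from the abstract datum $A \unlhd_n B$ — and making precise that after the Changing-Lemma surgery the composite map $B \to M_j$ is genuinely an $LPI$-embedding respecting $\unlhd_n$ rather than merely an inclusion of pregeometries. A secondary subtlety, already flagged in the text after Definition \ref{D540}, is that $\unlhd_n$-substructure status in $\mathcal{P}_n$ is defined relative to the chosen chain; one should be slightly careful that the genericity argument only ever uses this chain-relative notion, which is exactly how $FC2$ is phrased, so no genuine issue arises.
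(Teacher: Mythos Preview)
Your verification of $FC2$ contains a genuine gap. When you assume $A \unlhd_n M_i$ you write ``in the sense of the chain, so there is a witnessing lift $\widehat A \leq_n \widetilde M_i$ with $PG(\widehat A) = A$''. But this is not what the definition of $\unlhd_n$ gives you: it only provides \emph{some} structures $\widehat A,\widehat M_i\in\mathcal C_n$ with $PG(\widehat A)=A$, $PG(\widehat M_i)=M_i$ and $\widehat A \leq_n \widehat M_i$; there is no reason the witness $\widehat M_i$ should be the particular structure $\widetilde M_i$ coming from your chain. Concretely, for $n=3$ take $\widetilde M_i$ on $\{a,b,c,d\}$ with relations $(a,b,d),(a,c,d)$, and let $A$ be the subpregeometry of $M_i=PG(\widetilde M_i)$ on $\{a,b,c\}$. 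Then $A \unlhd_3 M_i$ (witnessed, for instance, by the structure on $\{a,b,c,d\}$ with relations $(a,b,c),(a,b,d)$), yet the only substructure of $\widetilde M_i$ on $\{a,b,c\}$ carries no relations, so has $\delta_3=3>2=d_{\widetilde M_i}(\{a,b,c\})$: it is neither self-sufficient in $\widetilde M_i$ nor has pregeometry $A$. Thus no lift $\widehat A \leq_n \widetilde M_i$ with $PG(\widehat A)=A$ exists, and your appeal to (F2) for $\mathcal M_n$ over $\widehat A$ cannot be made.

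The paper's proof addresses exactly this point via the Third Changing Lemma (Lemma~\ref{L528}). From the witness $A' \leq_n A_i'$ for $A \unlhd_n PG(A_i)$ one replaces $A_i$ by $A_i'$ inside $\mathcal M_n$, obtaining $\mathcal M_n'$ with $A_i'\leq_n\mathcal M_n'$, $PG(\mathcal M_n')=PG(\mathcal M_n)$ (Second Changing Lemma) and $\mathcal M_n'\simeq\mathcal M_n$ (Third Changing Lemma), so $\mathcal M_n'$ still enjoys the extension property. Now $A'\leq_n A_i'\leq_n\mathcal M_n'$ holds honestly; one aligns the lift of $B$ over $A'$ (as you did) and applies (F2) inside $\mathcal M_n'$. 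A short further argument then rebuilds the chain $A_j'$ for $j\geq i$ to verify $f(B)\unlhd_n PG(A_j)$ for the \emph{original} chain. The Third Changing Lemma is the missing ingredient in your proposal; your closing remark about the chain-relative notion does not resolve the issue, because $\unlhd_n$ is defined intrinsically on pairs of pregeometries, not relative to any lift fixed by the chain.
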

\begin{proof}
There is a unique generic model $\mathcal P_n$ of the class $(P_n,\unlhd_n)$ because we have seen that this class satisfies the conditions of last proposition. To prove that $\mathcal P_n\simeq PG(\mathcal M_n)$ we just need to prove that $PG(\mathcal M_n)$ is also a generic model for the class $(P_n,\unlhd_n)$.

Let $A_0\leq_n A_1\leq_n A_2\leq_n\cdots$ with $A_i\in\mathcal C_n$ and $\mathcal M_n=\bigcup_{i\in\mathbbm N}A_i$. In particular we have $PG(A_0)\unlhd_n PG(A_1)\unlhd_n PG(A_2)\unlhd_n\cdots$ with $PG(A_i)\in P_n$ and $PG(\mathcal M_n)=\bigcup_{i\in\mathbbm N}PG(A_i)$. We want to prove that $PG(\mathcal M_n)$ is a generic model for $(P_n,\unlhd_n)$ with respect to this chain. It remains to prove the extension property.

Suppose that $A\unlhd_n PG(A_i)$ and $A\unlhd_n B\in P_n$. We want to prove that there exist $j\in\mathbbm N$ and an embedding of pregeometries $f:B\to PG(\mathcal M_n)$ fixing the elements of $A$ and with $f(B)\unlhd_n PG(A_j)$. We have $A\unlhd_n PG(A_i)$, so there are structures $A',A_i'\in\mathcal C_n$ such that $PG(A')=A$ and $PG(A_i')=PG(A_i)$ and such that $A'\leq_n A_i'$. Let $\mathcal M_n'$ be the structure obtained from $\mathcal M_n$ by replacing $A_i$ by $A_i'$. Then by the First Changing Lemma we have $A_i'\leq_n\mathcal M_n'\in\bar{\mathcal C}_n$, by the Second Changing Lemma we have $PG(\mathcal M_n')=PG(\mathcal M_n)$ and by the Third Changing Lemma we have $\mathcal M_n'\simeq\mathcal M_n$.

We have $A\unlhd_n B\in P_n$, so there are structures $\widetilde A,\widetilde B\in\mathcal C_n$ such that $PG(\widetilde A)=A$ and $PG(\widetilde B)=B$ and $\widetilde A\leq_n\widetilde B\in\mathcal C_n$. Now we construct $B'$ obtained from $\widetilde B$ by replacing $\widetilde A$ by $A'$, by the the First Changing Lemma we get $A'\leq_n B'\in\mathcal C_n$, moreover, as $PG(A')=A=PG(\widetilde A)$ we can apply the Second Changing Lemma and we get $PG(B')=PG(\widetilde B)=B$. 

Now we have $A'\leq_n A_i'\leq_n\mathcal M_n'$ and $A'\leq_n B'\in\mathcal C_n$. But we have $\mathcal M_n'\simeq\mathcal M_n$ so $\mathcal M_n'$ satisfies the extension property. We can apply the extension property to construct an embedding $f:B'\to\mathcal M_n'$ fixing the elements of $A'$ and such that $f(B')\leq_n\mathcal M_n'$. Now we apply the forgetful functor to this embedding and obtain an embedding $f:B\to PG(\mathcal M_n')=PG(\mathcal M_n)$ of pregeometries. It remains to prove that $f(B)\unlhd_n PG(\mathcal M_n)$. 

For $j\geq i$ let $A_j'$ be the substructure of $\mathcal M_n'$ with the same underlying set as $A_j$. Notice that $A_i\leq_n A_{i+1}$ and $PG(A_i')=PG(A_i)$ imply by the first and second Changing Lemmas that $A_i'\leq_n A_{i+1}'$ and $PG(A_{i+1}')=PG(A_{i+1})$. Then in the next step $A_{i+1}\leq_n A_{i+2}$ and $PG(A_{i+1}')=PG(A_{i+1})$ imply that $A_{i+1}'\leq_n A_{i+2}'$ and $PG(A_{i+2}')=PG(A_{i+2})$... We repeat this procedure a countable number of times and we obtain a chain $A_i'\leq_n A_{i+1}'\leq_n A_{i+2}'\leq_n\cdots\leq_n A_j'\leq_n\cdots$ with $PG(A_j')=PG(A_j)$ and $A_j'\leq_n\mathcal M_n'$ because we have $\mathcal M_n'=\bigcup_{j\geq i}A_j'$. 

Finally we have that $f(B')\leq_n\mathcal M_n'\Rightarrow f(B')\leq_n A_j'\mbox{ (for some } j\geq i)\Rightarrow f(B)=PG(f(B'))\unlhd_n PG(A_j')=PG(A_j)$. Thus $f(B)\unlhd_n PG(\mathcal M_n)$ as desired. We proved the extension property for $PG(\mathcal M_n)$ so we can conclude that $PG(\mathcal M_n)\simeq\mathcal P_n$.
\end{proof}

We end this section mentioning an alternative and more natural way of defining the generic model of $(P_n,\unlhd_n)$. However we do not know if such a generic model exists.

\begin{defn}\rm
Let $$\bar P_n:=\{A:A\mbox{ is pregeometry and there exists }A'\in\bar{\mathcal C}_n\mbox{ with }PG(A')=A\}$$ and let $P_n$ be as usual, that is, the class of finite pregeometries of $\bar P_n$. 

Now we define an embedding relation $\sqsubseteq_n$ on $\bar P_n$ by saying: $A\sqsubseteq_n B$ if and only if there are $A',B'\in\bar{\mathcal C}_n$ such that $PG(A')=A,PG(B')=B$ and $A'\leq_n B'.$\end{defn}

 It is clear that this is preseved by automorphisms of $B$.
Notice also that for finite pregeometries $\sqsubseteq_n$ coincides with $\unlhd_n$. In other words we have $(P_n,\sqsubseteq_n)=(P_n,\unlhd_n)$.

We can now give the alternative possible definition of generic model of $(P_n,\unlhd_n)$.

\begin{defn}\rm
Let $\mathcal M$ be a countable pregeometry. We say that $\mathcal M$ is a $\sqsubseteq_n$-generic model of $(P_n,\unlhd_n)$ if:

\begin{itemize}
\item $\mathcal M\in\bar P_n$
\item ($\sqsubseteq_n$-extension property) $A\sqsubseteq_n\mathcal M$, $A\sqsubseteq_n B\in P_n$ imply that there exists an embedding of pregeometries $f:B\to\mathcal M$ over $A$ such that $f(B)\sqsubseteq_n\mathcal M$.
\end{itemize}
\end{defn}

In fact, if the $\sqsubseteq_n$-generic of $(P_n,\unlhd_n)$ exists, then it is isomorphic to the $\unlhd_n$-generic, that is, isomorphic to $PG(\mathcal M_n)$. The following problem arises naturally:

\begin{problem}
Does the $\sqsubseteq_n$-generic model exists? In other words, does $PG(\mathcal M_n)$ satisfies the $\sqsubseteq_n$-extension property?
\end{problem}

\end{document}